\newcommand{\mrm}{\mathrm}
\newcommand{\eff}{{\mathrm{eff}}}
\newcommand\numberthis{\addtocounter{equation}{1}\tag{\theequation}}
\newcommand\Pa{\mathcal{P} }
\newcommand\La{\mathcal{L} }
\newcommand\R{\mathbb{R} }
\newcommand\N{\mathbb{N} }
\newcommand\Z{\mathbb{Z} }
\newcommand\one{\mathbb{1}}
\newcommand{\E}{\mathbb{E}}
\newcommand{\J}{\mathbb{J}}
\renewcommand{\S}{\mathbb{S}}
\renewcommand{\P}{\mathbb{P}}
\renewcommand{\L}{\mathbb{L}}
\newcommand{\D}{\mathbb{D}}
\newcommand{\calL}{\mathcal{L}}
\newcommand{\calP}{\mathcal{P}}
\newcommand{\calM}{\mathcal{M}}
\newcommand{\calD}{\mathcal{D}}
\newcommand{\calB}{\mathcal{B}}
\newcommand{\calK}{\mathcal{K}}
\newcommand{\eps}{\varepsilon}
\newcommand{\ph}{\varphi}
\newcommand{\Id}{\mathrm{Id}}
\DeclareMathOperator{\Law}{Law} 
\newcommand{\Proba}{\mathrm{Pr}}
\newcommand{\e}{{\rm e}}
\renewcommand{\d}{\,\mathrm{d}}
\newcommand{\eqdef}{ \dps \mathop{=}^{{\rm def}} }
\newcommand{\dps}{\displaystyle}
\newcommand{\abs}[1]{\left | #1\right |}
\newcommand{\set}[1]{\left\{#1\right\}}
\newcommand{\p}[1]{ \left(#1\right) }
\renewcommand{\b}[1]{ \left[#1\right] }
\newcommand{\norm}[1]{\left\Vert#1\right\Vert}
\renewcommand{\le}{\leqslant}
\renewcommand{\ge}{\geqslant}
\theoremstyle{plain}
\newtheorem{The}{Theorem}[section]
\newtheorem{Lem}[The]{Lemma}
\newtheorem{Pro}[The]{Proposition}
\newtheorem{Cor}[The]{Corollary}
\newtheorem{Def}[The]{Definition}
\newtheorem{Ass}{Assumption}
\newtheorem{Rem}[The]{Remark}
\title{A general martingale approach to large noise homogenization}
\begin{document}
\author[1]{Dimitri Faure}
\author[2]{Mathias Rousset}
\affil[1]{DMA, École normale supérieure, Université PSL, CNRS, 75005 Paris, France}
\affil[2]{IRMAR and Inria, University of Rennes, France.}
\maketitle
\abstract{We consider a family of Markov processes with generator of the form $\gamma \calL_{1} + \calL_{0}$, in which $\calL_{1}$ generates a so-called \emph{dominant process} The dominant process is assumed to converge at large times towards a random point in a given subset of the state space called the \emph{effective state space}. Using the usual characterization through martingale problems, we give general conditions under which homogenization holds true: the original process converges, when $\gamma$ is large and for both the Meyer-Zheng pseudo-path topology and finite-dimensional time marginals, towards an identified effective Markov process on the effective space. Few simple model examples for diffusions are studied.
}

\tableofcontents

\section*{Introduction}

\paragraph{A formalism for large noise homogenization.} Let $E$ be a Polish state space, and let $E_{\eff} \subset E$ be a closed subset, hereafter called the \emph{effective state space}. This paper considers on $E$ a family of (c\`adl\`ag) Markov process denoted $\p{X_t^{\gamma}}_{t \geq 0}\equiv \p{X_t^{(\gamma,1)}}_{t \geq 0}$ with (formal) infinitesimal Markov generator of the form
\begin{equation}\label{eq:main_gen} 
\gamma \calL_{1} + \calL_{0},
\end{equation}
in which $\calL_{1},\calL_{0}$ are two (formal) infinitesimal Markov generators, and $\gamma > 0$ is a positive parameter that is meant to go to infinity ($\gamma \to +\infty$). 

In this setting, the following main structural assumption lay the basis to what is here called \emph{large noise homogenization}. For each initial condition $x \in E$, the process generated by $\calL_{1}$ (called here after the \emph{dominant} process/generator) and denoted $\p{X_t^{(1,0)}(x)}_{t \geq 0}$, is assumed to converge in probability for large time towards a random point in $E_\eff$:
\begin{equation}\label{eq:proj}
\begin{cases}
&\lim_{t \to +\infty} X^{(1,0)}_t(x) =  X^{(1,0)}_\infty(x) \in E_\eff \quad \text{in probability,}\\
& \Pa(x, \, .\,) \eqdef \Law(X^{(1,0)}_\infty(x)).
\end{cases}
\end{equation} 
One also want to define $E_\eff$ as the smallest effective space satisfying the above, which suggests the following associated additional assumption 
\begin{equation}\label{eq:minim}
\calP(x, \, . \,) = \delta_x, \qquad \forall x \in E_\eff .
\end{equation}
It also implies by definition of $\Pa$ that $\calL_{1} \Pa \ph = 0$ for any appropriately regular bounded test function $\ph: E_{\eff} \to \R$ on the effective state space. One can then define the operator
\begin{equation}\label{eq:eff}
\calL_{\eff} \ph \eqdef \calL_{0} \Pa \ph = \p{\gamma \calL_{1} + \calL_{0}}\Pa \ph,
\end{equation}
which is formally a Markov generator on $E_{\eff}$ as a limit when  $s \to 0$ of a pure jump Markov generator on $E_{\eff}$ given by 
$$\calL_{\eff} = \lim_{s \to 0} \p{ \e^{ s \calL_{0}} \Pa - \Id }/s , $$
where we have used the 'minimality' property~\eqref{eq:minim} of $E_\eff$. The process
\begin{equation} \label{eq:mart_0}
t \mapsto \Pa \ph\p{X_t^{\gamma}} - \int_0^t \calL_{0} \Pa \ph (X_s^{\gamma}) \d s
\end{equation}
is a martingale for the natural filtration generated by $X^\gamma$ for a large class of regular bounded $\ph$. Since by~\eqref{eq:proj} the dominant generator $\calL_{1}$ tends to confine the process $X^\gamma$ near $E_{\eff}$ when $\gamma$ is large, one may then expect in that asymptotics that: i) $X^\gamma$ behave approximately as a Markov process taking values in $E_\eff$ and, ii) by~\eqref{eq:mart_0}, it becomes a solution to the martingale problem in $E_\eff$ associated with $\calL_{\eff}$. Making this last statement rigorous and identifying the effective process is precisely what we call here the \emph{martingale problem approach to strong noise homogenization}. This will be carried out in the present work in a quite general setting using (as already proposed in previous works mentioned below) the pseudo-path topology studied in the seminal work~\cite{MZ84} of Meyer and Zheng. This should be contrasted with \textit{e.g.}~\cite{k12}, a classical reference on the martingale problem approach in a classical homogenization setting, which is using more classically the Skorokhod ('uniform') topology. 

Note that, in strong noise problems, if the initial process is, say, a diffusion, then the effective process is typically of jump type, so that convergence with respect to the stronger Skorokhod topology (instead of the pseudo-path topology) cannot be true.

\paragraph{A class of examples.} The main examples studied in this work (Section~\ref{sec:examples}) will be of the following form: $E$ will be a closed subset of a manifold (say $\R^d$) with a piece-wise smooth boundary; and $E_{\eff}$ will be a sub-manifold of this boundary (in this paper either a finite subset of the boundary, or the full boundary). The dominant process will be a diffusion which eventually almost surely hits the boundary of $E$ at $E_{\eff}$ (here with an infinite hitting time); and the perturbation process will be deterministic flow with drift pointing inward at the boundary of $E$. Note that in the context of potential theory, if the dominant process is a (singularly at $E_\eff$) time-changed Brownian motion and $E_{\eff} = \partial E$ is the boundary of a smooth domain $E$, then the kernel $\Pa(x, \, . \,)$ is exactly the so-called \emph{harmonic measure} of $E$ with pole at $x$. When $\calL_{0}$ is an inward vector field, the Markovian effective generator~\eqref{eq:eff} is also known as a quite standard general version of the so-called \emph{Dirichlet-to-Neumann} operator. The latter has been extensively studied in the inverse problem literature (see \textit{e.g.} the seminal work~\cite{sylvester1987global}) for reasons of independent interest. The Dirichlet-to-Neumann effective operator is, in fact, a non-diffusive Markov operator on $E_\eff$ of Levy type, as will be demonstrated in the examples of Section~\ref{sec:examples}. More comments on that topic will be given in the end of this introduction. 

\paragraph{Previous work.} Motivated by quantum continuous measurements (see~\cite{q1,q2,q3,q4}), or filtering theory (see~\cite{cedric2022spike}), strong noise homogenization has recently been rigorously studied in the special case where $E_{\eff}$ is a finite set. In \cite{BBCCNP}, the authors show strong noise homogenization for a certain class of diffusion processes having a generator of the form\footnote{More precisely, in~\cite{BBCCNP} a very slightly more general situation is considered with the possible additional presence of generators at intermediate time scales (in terms of $\gamma$).}~\eqref{eq:main_gen}. In order to prove their theorem, the authors develop a {\it{finite-dimensional}} homogenization theorem~\cite[Theorem 3.1]{BBCCNP} for {\it{bounded operators}} and, due to the specific form of the diffusion processes considered (linear drift and quadratic mobility), are able to prove the convergence by developing a perturbative argument. In~\cite{faure2022averaging}, the author adapts the proof of  \cite[Theorem 3.1]{BBCCNP} for some (unbounded) generators of diffusion operators living in a simplex $K\subset {\mathbb R}^n$ whose vertices define $E_{\eff}$, together with a martingale dominant process. This ensures that the kernel of ${\mathcal L}_{(1)}$ is a finite-dimensional subspace of affine functions of $\R^d$ which simplifies the analysis. 

 The above works consider the so-called (Meyer-Zheng) pseudo-path topology (see~\cite{MZ84} and Section~\ref{sec:MeyerZheng}) on trajectories, as well as the associated Meyer-Zheng ''tightness'' criteria, in order to handle strong noise homogenization. 
 
 The use of the Meyer-Zheng criteria in a homogenization context is however neither new nor limited to the recent consideration of strong noise problems. It has been quite consistently used more than a decade ago to handle other intricate cases; for instance for periodic problems (see~\cite{pardoux2}), or for backward stochastic differential equations (see \cite{pardoux1}).
 
\paragraph{New results.} By considering the classical martingale problem approach, this paper generalizes the previously mentioned body of work. The basic idea consists in remarking that the pseudo-path topology and the associated Meyer-Zheng criteria are well-suited to martingale problems, which is a standard tool for weak (in law) convergence results in classical homogenization theory. Note that in classical references (as~\cite{k12}), martingale problems and limits of processes are handled with the Skorokhod topology (see~\cite{EK} for a general reference on convergence of processes and martingale problems using this topology). 

In the present work, the homogenization scheme consists in proving the following steps.

First, i) relative compactness of trajectory distributions is, by definition of the pseudo-path topology, a usually easy consequence of Prokhorov theorem. One can then consider extracted converging sequences. 

Second, ii) one needs to check that the limiting pseudo-path takes its values in $E_{\eff}$. This step may be quite technical, and we propose a relatively natural setting that enables to carry out Step~ii) by first proving an intermediate property: the dominant process converges towards $E_\eff$ for large times uniformly with respect to initial conditions in compacts (see Proposition~\ref{cvunif}). 

Third, iii) c\`adl\`ag martingales constructed from the full process~\eqref{eq:main_gen} with specific test functions of the form $\Pa \ph$ are expected to have by~\eqref{eq:eff} a mean variation uniformly bounded in $\gamma$: this is the so-called Meyer-Zheng criteria. This criteria (a key result of~\cite{MZ84}) typically implies that the considered martingales do converge to c\`adl\`ag limiting martingales. The limit of the extracted sequence of processes in step i) is then expected to satisfy the martingale problem associated with $\calL_{\eff}$. 

Finally, iv) one simply needs to assume that the latter effective martingale problem is well-posed (uniqueness in law of Markov processes solution of it). We thus eventually obtain convergence in distribution of the considered process towards the effective process in the pseudo-path topology. Extension of the convergence to finite-dimensional time-marginals are also obtained.

In order to make the above four steps i), ii), iii) and iv) rigorous, two assumptions are key. The first assumption (later on denoted Assumption~\ref{ass:P_cont}) is related to the continuity of the dominant process hitting distribution $\Pa(x, \, .\,)$ with respect to the initial condition $x$. This assumption is key in proving the large time convergence of the dominant process uniformly with respect to initial conditions, and, subsequently Step~ii). It is inherited from~\cite{faure2022averaging} in which it is argued that it is somehow unavoidable using a counter-example where the limit process is not Feller. The second important assumption is of course the fact that the effective generator $\calL_\eff$ is well-defined, continuous near $E_\eff$ and defines a well-posedness martingale problem. Well-posedness of martingale problems may not be so easy to obtain, but it is a topic on its own (well covered at least in $\R^d$) in the literature. When the effective dynamics can be described by a stochastic differential equation which is itself well-posed in the strong sense (Lipschitz coefficients), quite generic results exist (see \textit{e.g.} the reference~\cite{kurtz2011}).

\paragraph{A simple example of a new limit theorem.}
As an example of the novelty of the obtained results, one may consult the example of Section~\ref{sec:exa_strip}. In the latter, one considers an already studied one dimensional toy model for strong noise homogenization which consists of a dominant time-changed Brownian motion in $[-1,1]$ perturbed by a drift. Previous works have shown the convergence towards a jump process in $E_{\eff}=\set{+1,-1}$. The novelty consist in enriching the process with its quadratic variation, leading to a process defined on $E=[-1,1] \times \R_+$. This enrichment may be thought of as a way to keep track of the ''fast time scale'' of the dominant process. We prove convergence of the enriched process to a limiting effective process in $\{-1,+1\} \times \R_+$. The latter is the sum of two generators.

The first is a jump generator hoping between $+1$ and $-1$ with a random psoitive jump in $\R_+$ (whose distribution is proportional to an explicit finite measure denoted $\nu$).

The second is a Levy generator in $\R_+$ of subordinator type with explicit Levy measure proportional to:
$$\mu(\mathrm{d}h) \eqdef \left(\frac{\pi^2}{16} \displaystyle\sum_{k=-\infty}^{+\infty} k^2 e^{-h\frac{\pi^2 k^2}{8}} \right) \mathrm{d}h.
$$
The jumps of this subordinator is a mixture exponentially distributed jumps of size $\frac{8}{\pi^2 k^2}$ with $k \in \Z$. The latter can be interpreted as the ''sizes'' (in terms of a re-scaled time) of the ''spikes'' of the original process inside the interval $]-1,+1[$ in between a hoping event from $\pm 1$ to $\mp 1$. This phenomenology of ''spikes'' in strong noise homogenization has already been pointed out and discussed, for instance in~\cite{BBCCNP}.

\paragraph{Link with the boundary process associated with reflected diffusions.} Finally, we want to suggest some potentially interesting connections between strong noise homogenization and reflected diffusions defined as solutions of the Skorokhod reflection problem (see \textit{e.g.}~\cite{pilipenko2014introduction} for a general reference on the topic). The context is the following: the dominant process is now a diffusion with generator $\widetilde \calL_{1}$ which hits $E_\eff$ at a finite (as opposed to typically infinite) time. $\widetilde \calL_{1} = a \times \calL_{1}$ can typically be obtained from one another using a time-change scalar field $a>0$ singular near $E_\eff$. In addition $E_{\eff}= \partial E$ is assumed to be the smooth boundary of regular domain $E$ of, say, $\R^n$, and the perturbation generator $\calL_{0}= F_{(0)}^i \partial_i$ is a vector field which is inward-pointing at the boundary $\partial E$. We have already mentioned above that in this context the effective process generator~\eqref{eq:eff} is the so-called \emph{Dirichlet-To-Neumann} (Markov) operator defined on smooth functions of $E_{\eff}$, which has been studied in the inverse problem literature for linear elliptic partial differential equations (see \textit{e.g.} the seminal work~\cite{sylvester1987global}). It is quite interesting to remark that in~\cite{hsu1986excursions}, the author gives a probabilistic (and completely different) interpretation of $\calL_{\eff}$ using reflected diffusions.   

Let us briefly describe the idea. Consider the diffusion constrained in $E$ by the reflection at the boundary $\partial E$ of $E$ generated by the inward vector field $F_{(0)}$. The latter (see~\cite{pilipenko2014introduction}), denoted here $(X_t)_{t \geq 0}$, is classically defined as the solution of the so-called Skorokhod problem, which is obtained by adding a term to a stochastic differential equation satisfied by the non-reflected diffusion. This additional term must of the form  $F_{(0)}(X_t) \d L_t $ where $L_t$ is a non-decreasing adapted process that is constant outside the boundary (it must satisfies $\d L_t = \one_{X_t \in \partial E} \d L_t$); $L_t$ is a kind of local time that has to be determined as an output of the Skorokhod problem. The \emph{boundary process} is then the Markov process $X^{\partial E}_l  = X_{\tau_l}$ obtained by indexing the initial process with the local time $\tau_l = \inf( t: \, L_t=l)$. If $\Pa(x, \, . \,)$ denotes the hitting distribution at the boundary of $E$ of the underlying non-reflected diffusion $\calL_{1}$, it can be shown using a simple application of It\^o formula to $\Pa(\ph)$ for $\ph$ smooth (see~\cite{hsu1986excursions} for the special case of Brownian motion and normal reflection) that the boundary process $(X_{\tau_l})_{l \geq 0}$ is precisely a solution of the martingale problem associated with $L_{\eff} = \calL_{0} \Pa $. The latter can thus be interpreted as the associated infinitesimal generator.

Developing the connections between strong noise homogenization and boundary processes lies outside the scope of this paper and his left as an open problem of interest. We simply want to mention that reflected diffusions may also be obtained as a limit of diffusions of the form~\eqref{eq:main_gen} using time-changes of the dominant and/or the perturbation process. It is thus not completely surprising to obtain the same 'trace' at the boundary.

\paragraph{Organization.} This work is divided as follows. In Section~\ref{sec:setting}, we rigorously introduce notation, then state the different assumptions, and finally state the main strong noise homogenization theorems. In Section~\ref{sec:proofs}, the main theorems are proved. Finally in Section~\ref{sec:examples}, explicit examples are described and are treated in full details.

\section{Setting, assumptions and main theorem} \label{sec:setting}

\subsection{Notation}
\begin{itemize}
    \item $(X^{(\gamma_1,\gamma_0)}_t)_{t \geq 0}$ the Markov process with generator $\gamma_1 \calL_{1} + \gamma_0 \calL_{0}$.
    \item Short-hand notation:
$
X^\gamma \eqdef X^{(\gamma,1)}.
$
    \item $\mu_0^\gamma  \eqdef \Law(X^\gamma_0)$ an initial distribution.
    \item $t \mapsto X_t(x)$ in order to stress that the initial condition is given by $X_0(x) = x$;
    \item $\E_x$ the expectation over the distribution $(X_t(x))_{t \geq 0}$, when $x \in E$;
    \item $\E_\mu$ the expectation over the distribution $(X_t)_{t \geq 0}$, where $\Law(X_0) = \mu $.
    \item  $L^0(\e^{-t} \d t, E)$: equivalence class of measurable trajectories $\R_+ \to E$ that are Lebesgue almost everywhere equal, and endowed with the Polish pseudo-path topology.
    \item $\Proba(E)$: space of probabilities on $E$.
\end{itemize}

\subsection{Setting and Assumptions}
Let $E$ denote a Polish state space. For each non-negative $\gamma_1,\gamma_0 \geq 0$, we assume that we can measurably associate to any initial condition $x\in E$ a c\`adl\`ag Markov process denoted
$$t \mapsto X_t^{(\gamma_1,\gamma_0)}(x),$$ 
whose (formal) infinitesimal generator is of the form
$$ \calL^{(\gamma_1,\gamma_0)} \eqdef  \gamma_1 \calL_{1} + \gamma_0 \calL_{0}. $$
In the above, $\calL_{1}$ (resp. $\calL_{0}$) are the generator associated with the so-called \emph{dominant} $X^{(1,0)}$ (resp. \emph{perturbation} $X^{(0,1)}$) process. 

In what follows, we denote by $$\mu_0^\gamma = \Law(X_0^\gamma)\in \Proba(E)$$ the initial condition of the considered process. Throughout this paper we will assume that $\mu_0^\gamma$ converges to a limit $\mu_0$ in $\Proba(E)$. 

The main structural assumption underlying the present work is the following: for any $x \in E$, the dominant process $X_t^{(1,0)}(x)$ is assumed to converge for large times $t$ (at least in probability, although the almost sure convergence is typical) towards a random variable taking values in a closed subset $E_{\eff} \subset E$.
\begin{Ass}[Main assumption]\label{ass:main} 
For each $x  \in E$, there is a random variable taking values in $E_\eff$ denoted $X^{(1,0)}_\infty(x) \in E_\eff$ such that, in probability,
$$
\lim_{t \to +\infty} X^{(1,0)}_t(x) = X^{(1,0)}_\infty(x) \in E_\eff.
$$
Moreover, $E_\eff$ is minimal in the sense that if $x \in E_\eff$, then $X_\infty(x) = x$.

\end{Ass}
The limiting distribution thus defines a measurable probability kernel denoted
$$
\Pa(x, \, . \,) \eqdef  \Law(X^{(1,0)}_\infty(x)) \in \Proba(E_\eff).
$$

As explained in the introduction, the main goal of this work is to study the limit effective distribution of the process $X^{(\gamma,1)}$ when $\gamma \to +\infty$, and to show that the latter is a Markov process on $E_\eff $ with infinitesimal generator given by
$$
\calL_{\mrm{eff}} \b{\ph} \eqdef \calL_{0} \b{ \Pa \b{\ph}}.
$$

\begin{Rem}[On invariances by time changes]
First, note that the distribution of $t \mapsto X^{(k\gamma,k\gamma_0)}_{t/k}$ is independent of $k>0$. Second, the effective generator $\calL_{\eff}$ depends on the dominant generator only through the probability kernel $\Pa$. As a consequence, the effective process is formally invariant by a time-change of the dominant process, which amounts to multiply $\calL_{1}$ by a strictly positive function.
\end{Rem}

Our first technical assumption amounts to assume that $\Pa$ is continuous.
\begin{Ass}[Continuity of $\Pa$]\label{ass:P_cont}
The map $x \mapsto \Pa(x,\,.\,)$ is continuous on $E$ for the usual metric topology of convergence in distribution.
\end{Ass}
In particular, since for all $x_0\in E_\eff$,
$
\Pa(x_0,\,.\,)=\delta_{x_0},
$
, we obviously have  $\lim_{x \to x_0} \Pa(x,\,.\,)=\delta_{x_0}$ for all $x_0 \in E_\eff$. 

The next assumption is a natural compact containment condition required in order to obtain the tightness of the considered sequence of processes. This compact containment condition is not a necessary condition for tightness; it might be relaxed in specific cases. 
\begin{Ass}[Compact containment]\label{ass:tight} Let $\gamma_0 \in \set{0,1}$ be given. For any horizon time $T$, any $\eps > 0$ and any compact $K\subset E$, there exists a compact set $K_{\eps,T}\subset E$ such that:
\begin{equation}\label{eq:cont2}
\sup_{x\in K, \, \gamma > 0}\P_{x} \b{ \exists t\in [0,T], \,  X^{({\gamma},{\gamma_0})}_t \in K_{\eps,T} ^c } \leq \eps.
\end{equation}
\end{Ass}
Using a time scaling argument, it is possible to show the following direct consequence on the dominant process.
\begin{Lem} Assumption~\ref{ass:tight} implies that for any $\eps > 0$ and any compact $K\subset E$, there exists a compact set $K_{\eps,T}\subset E$ such that:
\begin{equation}\label{eq:cont_dom}
\sup_{x\in K}\P_{x} \b{ \exists t\in \R_+, \,  X^{(1,0)}_t \in K_{\eps} ^c } \leq \eps .
\end{equation}
\end{Lem}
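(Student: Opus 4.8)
The plan is to exploit the time-change invariance already recorded in the Remark on invariances by time changes, namely that the law of $t \mapsto X^{(k\gamma, k\gamma_0)}_{t/k}$ is independent of $k > 0$. Applying this with $\gamma_0 = 0$ in Assumption~\ref{ass:tight}, we get that for every $\gamma > 0$ and every $k > 0$, the process $(X^{(k\gamma, 0)}_{t/k})_{t \geq 0}$ has the same law as $(X^{(\gamma,0)}_t)_{t \geq 0}$; equivalently, setting $\gamma = 1$, the law of $(X^{(k,0)}_{t/k})_t$ equals that of $(X^{(1,0)}_t)_t$ for every $k > 0$. Therefore the event $\{\exists t \in [0,T],\ X^{(k,0)}_t \in K_{\eps,T}^c\}$ has, under $\P_x$, the same probability as $\{\exists s \in [0, kT],\ X^{(1,0)}_s \in K_{\eps,T}^c\}$ (reparametrize $s = kt$). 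So the uniform-in-$\gamma$ bound of Assumption~\ref{ass:tight} with $\gamma_0 = 0$, which reads $\sup_{x \in K,\ \gamma > 0} \P_x[\exists t \in [0,T],\ X^{(\gamma,0)}_t \in K_{\eps,T}^c] \leq \eps$, translates into $\sup_{x \in K,\ k > 0} \P_x[\exists s \in [0,kT],\ X^{(1,0)}_s \in K_{\eps,T}^c] \leq \eps$.

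Next I would let $k \to +\infty$ (with $T > 0$ fixed, say $T = 1$). The events $A_k := \{\exists s \in [0, kT],\ X^{(1,0)}_s \in K_{\eps,T}^c\}$ are nondecreasing in $k$, and their union over $k \in \N$ is exactly $\{\exists s \in \R_+,\ X^{(1,0)}_s \in K_{\eps,T}^c\}$. By monotone convergence (continuity of measure along an increasing sequence of events), $\P_x[\exists s \in \R_+,\ X^{(1,0)}_s \in K_{\eps,T}^c] = \lim_{k \to \infty} \P_x[A_k] \leq \eps$ for each $x \in K$. Taking the supremum over $x \in K$ (the bound $\eps$ being uniform in $x$ and $k$) gives $\sup_{x \in K} \P_x[\exists s \in \R_+,\ X^{(1,0)}_s \in K_\eps^c] \leq \eps$, which is precisely~\eqref{eq:cont_dom} with $K_\eps := K_{\eps,1}$. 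Note the horizon $T$ disappears from the final statement, consistent with the fact that the dominant process is being considered on all of $\R_+$.

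I do not expect any serious obstacle here; the only points requiring a little care are purely measure-theoretic bookkeeping. First, one should make sure the event $\{\exists t \in [0,T],\ X_t \in K^c\}$ is measurable — this is standard for c\`adl\`ag processes and closed $K$ (the running image hits the open set $K^c$ iff it does so at a rational time or just after one, by right-continuity), and in any case the measurability is implicit in the phrasing of Assumption~\ref{ass:tight}. Second, one should double-check that the time-change identity from the Remark is applied with the correct scaling so that $X^{(\gamma,0)}$ on $[0,T]$ genuinely corresponds to $X^{(1,0)}$ on $[0, \gamma T]$; this is just unwinding the substitution $k = \gamma$. The essential mechanism is simply that "large $\gamma$ on a bounded time window" is the same as "long time window for the dominant process", so a compact containment uniform in $\gamma$ becomes a compact containment on $\R_+$ for the dominant process.
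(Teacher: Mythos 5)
Your proposal is correct and follows essentially the same route as the paper: the time-change identity (law of $X^{(\gamma,0)}$ on $[0,T]$ equals that of $X^{(1,0)}$ on $[0,\gamma T]$) applied to Assumption~\ref{ass:tight} with $\gamma_0=0$, followed by monotone convergence along the increasing events as the horizon tends to infinity. The paper's proof is just a more compressed version of the same argument (with $T=1$), so there is nothing to add.
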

\begin{proof}
    Let $T=1 $ be fixed. By a time rescaling argument and routine monotone convergence, it holds 
    \begin{align*}
        & \sup_{ \gamma > 0}\P_{x} \b{ \exists t\in [0,1], \,  X^{(\gamma,0)}_t \in K_{\eps} ^c }=\\
        & \quad \sup_{ \gamma > 0}\P_{x} \b{ \exists t\in [0,\gamma], \,  X^{(1,0)}_t \in K_{\eps} ^c }  = \P_{x} \b{ \exists t\in \R_+, \,  X^{(1,0)}_t \in K_{\eps} ^c }.
    \end{align*} 
    Then Assumption~\ref{ass:tight} yields the result.
\end{proof}
Note also that for a converging sequence of initial distributions $\mu_0^\gamma \to_{\gamma \to +\infty} \mu_0$ in $\Proba(E)$ (which form a tight family), the above assumption implies the existence of a compact $K_{\eps,T}$ such that 
\begin{equation}\label{eq:cont}
\sup_{\gamma > 0} \P_{\mu_0^\gamma} \b{ \exists t \in [0,T], \,  X^{(\gamma,\gamma_0)}_t \in K_{\eps,T} ^c } \leq \eps.
\end{equation}

We next present two additional assumptions that will be used to prove a (technical) result given in Proposition~\ref{cvunif} below that proves that the large time convergence of the dominant process towards $E_\eff$ is uniform with respect to initial conditions in a given compact set. The first one requires that the dominant process can be represented as stochastic flow (\textit{e.g.} as a strong solution of a Stochastic Differential equation parametrized by the initial condition) with some continuity with respect to the initial condition.
\begin{Ass}[Continuous stochastic flow]\label{ass:flow}The family of dominant processes indexed by their initial conditions can be constructed as a measurable map
$$
(\omega,t,x) \mapsto X_t^{(1,0)}(x)(\omega) \in E,
$$
from $\Omega \times [0,+\infty[ \times E$ to $E$ and such that, for each $t \in [0,+\infty[$, $x \mapsto X_t^{(1,0)}(x)$ is continuous in probability.
\end{Ass}
The second one is a necessary (see Remark~\ref{rem:assSEP} below for a counter-example) technical condition.
\begin{Ass}[Randomness of the hitting point]\label{ass:non_dirac}  The hitting point of the dominant process, $X^{(1,0)}_{\infty}(x)$, is deterministic if and only if $x\in E_{\eff}$. In the latter case ($x\in E_\eff$), we recall that $X^{(1,0)}_\infty(x)=x$.  
\end{Ass}

We can then obtain the following technical result (Proposition~\ref{cvunif}) that gives sufficient conditions implying uniform convergence in time on compacts of the dominant process.
\begin{Rem}[On the role of the above assumptions] Contrary to other assumptions, \emph{Assumption~\ref{ass:flow} and~\ref{ass:non_dirac} are only used for the proof of Proposition~\ref{cvunif}}. In the same way, Proposition~\ref{cvunif} is only required once in Step~$2$ (that states that the limit effective process lies in $E_\eff$) of the proof of the main homogenization theorem. 
\end{Rem}

\begin{Pro}[The large time convergence of the dominant process is uniform]\label{cvunif} Assume that the dominant process satisfies Assumption~\ref{ass:main}-\ref{ass:P_cont}-\ref{ass:tight}-\ref{ass:flow}-\ref{ass:non_dirac} (\textit{Nota Bene:} for the dominant process  Assumption~\ref{ass:tight} amounts to~\eqref{eq:cont_dom}). Then there exists a continuous bounded non-negative function $f$ with $f > 0$ on $E \setminus E_\eff$ and $f=0$ on $E_\eff$ such that for any compact $K \subset E $:
$$
\lim_{t \to + \infty} \sup_{x \in K}\E\b{f(X^{(1,0)}_t(x))} = 0 .
$$
\end{Pro}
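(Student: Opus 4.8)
The plan is to construct $f$ as a uniform (over compacts) modulus derived from the pointwise convergence in Assumption~\ref{ass:main}, with the continuity supplied by Assumptions~\ref{ass:P_cont}, \ref{ass:flow}, \ref{ass:non_dirac}. First I would introduce a bounded continuous ``distinguishing'' functional on $E$ that detects whether $\Pa(x,\cdot)$ is a Dirac mass: concretely, fix a countable dense sequence of bounded continuous $(\ph_k)_{k\ge1}$ on $E$ with $\norm{\ph_k}_\infty\le 1$ separating points/measures, and set
$$
g(x) \eqdef \sum_{k\ge1} 2^{-k}\,\Bigl(\Pa(\ph_k^2)(x) - \bigl(\Pa(\ph_k)(x)\bigr)^2\Bigr),
$$
i.e.\ a weighted sum of variances of $\ph_k$ under $\Pa(x,\cdot)$. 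By Assumption~\ref{ass:P_cont} each term is continuous in $x$, hence $g$ is continuous and bounded; it is $\ge 0$, and $g(x)=0$ iff $\Pa(x,\cdot)$ is a Dirac mass, which by Assumption~\ref{ass:non_dirac} happens iff $x\in E_\eff$. So $g$ already has the sign/zero structure required of $f$; the only thing missing is the uniform-in-time decay along the dominant flow, which is what the bulk of the argument must establish. (If boundedness of $g$ needs care one can compose with $r\mapsto r\wedge 1$, which preserves continuity and the zero set.)

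Next I would show the pointwise convergence $\E\b{g(X^{(1,0)}_t(x))}\to 0$ as $t\to\infty$ for each fixed $x$. The key observation is the (approximate) martingale-in-time / Markov-property identity: for $s\le t$,
$$
\Law\bigl(X^{(1,0)}_\infty(x)\bigr)=\E\b{\Law\bigl(X^{(1,0)}_\infty(X^{(1,0)}_t(x))\bigr)}=\E\b{\Pa\bigl(X^{(1,0)}_t(x),\cdot\bigr)},
$$
so that $t\mapsto \E[\Pa(\ph)(X_t^{(1,0)}(x))]$ is constant, equal to $\Pa(\ph)(x)$. Meanwhile Assumption~\ref{ass:main} gives $X^{(1,0)}_t(x)\to X^{(1,0)}_\infty(x)$ in probability, and since the latter lives in $E_\eff$ where $\Pa(y,\cdot)=\delta_y$, combining with the continuity of $\Pa$ (Assumption~\ref{ass:P_cont}) and bounded convergence one gets $\E\b{g(X^{(1,0)}_t(x))}\to g_\infty(x)$ where $g_\infty(x)$ is built from the variances of $\ph_k$ under $\Law(X^{(1,0)}_\infty(x))$ — but those variances vanish because $X^{(1,0)}_\infty(x)\in E_\eff$ means\ldots wait, no: $X^{(1,0)}_\infty(x)$ need not be deterministic. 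The correct route is instead: $g(X_t^{(1,0)}(x))=\sum 2^{-k}\Var_{\Pa(X_t,\cdot)}(\ph_k)$, and by the martingale property $\Pa(X_t^{(1,0)}(x),\cdot)$ is a measure-valued martingale that converges a.s. (by Assumption~\ref{ass:main} plus continuity of $\Pa$) to $\delta_{X^{(1,0)}_\infty(x)}$, a Dirac mass, on which every variance is $0$; bounded convergence then yields $\E\b{g(X^{(1,0)}_t(x))}\to 0$. This is the cleanest form of the pointwise statement.

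Finally — and this is where I expect the real work to be — I would upgrade pointwise convergence to convergence uniform over a compact $K$. The tool is a Dini-type / equicontinuity argument: the family $\{x\mapsto \E\b{g(X^{(1,0)}_t(x))}\}_{t\ge0}$ is equicontinuous on $K$ (using Assumption~\ref{ass:flow}, continuity in probability of $x\mapsto X_t^{(1,0)}(x)$, together with the compact containment~\eqref{eq:cont_dom} to control mass escaping to infinity and the uniform continuity of the bounded function $g$ restricted to $K_\eps$), and $t\mapsto \E\b{g(X^{(1,0)}_t(x))}$ — while not literally monotone — is ``asymptotically monotone'' in a sense sufficient to run Dini: more robustly, one replaces $g$ along the way by $h_T(x)\eqdef \sup_{t\ge T}\E\b{g(X^{(1,0)}_t(x))}$, shows each $h_T$ is upper semicontinuous (decreasing limit of the equicontinuous family over $t\in[T,T']$, $T'\uparrow\infty$, is u.s.c.), that $h_T\downarrow 0$ pointwise on $K$ by the previous step, and invokes Dini's theorem for a decreasing sequence of u.s.c. functions decreasing to a continuous ($\equiv0$) limit on the compact $K$. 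The main obstacle is precisely justifying the equicontinuity/compactness uniformly in the time parameter $t$: Assumption~\ref{ass:flow} gives continuity of $x\mapsto X_t^{(1,0)}(x)$ for each fixed $t$ but not uniformly in $t$, so one must route the argument through the martingale representation $\E\b{g(X_t^{(1,0)}(x))}=\E\bigl[\sum 2^{-k}\Var_{\Pa(X_t(x),\cdot)}(\ph_k)\bigr]$ and use the continuity of $x\mapsto\Pa(x,\cdot)$ (Assumption~\ref{ass:P_cont}) — which is genuinely $t$-free — rather than continuity of the flow at time $t$, turning the $t$-uniformity into a consequence of the single continuous map $\Pa$ composed with the (only weakly controlled) evolution; Assumption~\ref{ass:non_dirac} is what guarantees the limit function is continuous with the exact zero set $E_\eff$ so that Dini applies with limit $0$, not merely some continuous function vanishing on $E_\eff$ but possibly positive where $\Pa$ is Dirac for spurious reasons.
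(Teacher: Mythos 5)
Your construction of the candidate function is fine: the weighted sum of variances $g(x)=\sum_k 2^{-k}\bigl(\Pa(\ph_k^2)(x)-(\Pa\ph_k(x))^2\bigr)$ is continuous by Assumption~\ref{ass:P_cont}, vanishes exactly when $\Pa(x,\cdot)$ is a Dirac mass, hence exactly on $E_\eff$ by Assumption~\ref{ass:non_dirac}, and the pointwise convergence $\E[g(X^{(1,0)}_t(x))]\to 0$ follows from Assumption~\ref{ass:main} alone. The genuine gap is in the upgrade to uniformity over compacts, which is the entire content of the proposition. Your Dini scheme rests on the claim that $h_T(x)=\sup_{t\ge T}\E[g(X^{(1,0)}_t(x))]$ is upper semicontinuous; but a supremum of continuous functions is lower semicontinuous, not upper, and your justification (``decreasing limit of the equicontinuous family over $t\in[T,T']$, $T'\uparrow\infty$'') is backwards: as $T'$ increases the suprema increase, so you get an l.s.c.\ envelope, which is useless for Dini. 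The only way to make $h_T$ u.s.c.\ (indeed continuous) along these lines is equicontinuity of $x\mapsto\E[g(X_t^{(1,0)}(x))]$ uniformly in $t$ --- and, as you yourself concede, Assumption~\ref{ass:flow} gives continuity in probability only at each fixed $t$, with no uniformity in $t$; the remark that $\Pa$ is ``$t$-free'' does not supply a mechanism, since the composition still passes through $X_t(x)$. So as written the proof does not close. For comparison, the paper obtains the $t$-uniformity by a completely different device: it couples $X_t(x)$ and $X_t(x')$ on the same probability space via the stochastic flow, observes that $t\mapsto|\Pa f_n(X_t(x))-\Pa f_n(X_t(x'))|$ is a \emph{submartingale} (difference of two $\Pa$-harmonic martingales), so its expectation is monotone in $t$ and bounded by its value at $t=\infty$, and then controls $\E\,d(X_\infty(x),X_\infty(x'))\wedge 1$ through a separate continuity-in-probability result for $x\mapsto X_\infty(x)$ (Lemma~\ref{lem:approx_limit}, which itself needs the Markov property, a measurable $\eps$-projector, and the compact containment~\eqref{eq:cont_dom}), before covering $K$ by finitely many balls. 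None of this, nor any substitute for it, appears in your proposal.

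Ironically, you are one observation away from a clean (and arguably simpler) argument with your own $g$: since $t\mapsto\Pa\ph_k(X^{(1,0)}_t(x))$ and $t\mapsto\Pa(\ph_k^2)(X^{(1,0)}_t(x))$ are martingales, Jensen's inequality shows $(\Pa\ph_k(X_t))^2$ is a submartingale, hence $t\mapsto\E[g(X^{(1,0)}_t(x))]$ is \emph{non-increasing} --- contrary to your statement that it is ``not literally monotone''. With monotonicity in $t$, continuity in $x$ at each fixed $t$ (Assumption~\ref{ass:flow} plus bounded convergence), pointwise convergence to $0$, and the classical Dini theorem on the compact $K$, the conclusion would follow directly. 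But that argument is not the one you wrote, and the step you did write (u.s.c.\ of $h_T$ via unproved $t$-uniform equicontinuity) fails.
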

The proof is this result is quite technical and is left at the end of Section~\ref{sec:proofs}. 
\begin{Rem}[Assumption~\ref{ass:non_dirac} is necessary]\label{rem:assSEP}
    The only unusual requirement of Proposition~\ref{cvunif} is Assumption~\ref{ass:non_dirac}. It is however unavoidable. Indeed, consider the deterministic process on the unit circle of generator $\sin^2(\frac{\theta}{2}) \cdot \partial_{\theta}$, with $\theta \in [0,2\pi[$. The process evolves counter-clockwise, and from any starting point different from $\theta = 0$ one eventually converges to $\theta = 0$ in infinite time. However, the hitting time of some $\theta_1 \neq 0$ starting with initial condition $\theta_0$ becomes infinite when $\theta_0 \to 0^+$: if we start just above $0$, we reach $\theta_1 > 0$ only after a very large time. As a consequence, the process cannot be arbitrarily close to $0$ at a large given time uniformly in the starting point; and the conclusion of Proposition~\ref{cvunif} cannot hold true.
\end{Rem}

The next continuity assumption is more benign, and ensures that the addition of a small perturbation process results in a small change of distribution.

\begin{Ass}[Continuous perturbation]\label{ass:contper} Let $\ph$ be any continuous bounded function. One has the continuity property: for all $t\geq 0$ and all $K \subset E$ compact,
$$
\lim_{\gamma \to + \infty} \sup_{x \in K}\abs{ \E(\ph(X^{(1,1/\gamma)}_t(x))) 
- \E(\ph(X^{(1,0)}_t(x)))} = 0
$$

\end{Ass}

We can finally consider assumptions related to the limit process and its generator. The first assumption is a technical requirement that ensures that a limiting effective generator indeed exists, with appropriate continuity properties.
\begin{Ass}[Existence of an effective generator]\label{ass:effgen} There exists an operator $\calL_{\mrm{eff}}$ acting on a vector space of measurable test functions of $E_\eff$ denoted $\calD_\eff$, and such that:

i) $\calD_{\eff}$ is a subset of continuous and bounded functions, contains constants, and contains a countable subset $(\varphi_i)_{i\in I}$ that separates points of $E_{\eff}$.

ii) To any $\ph \in \calD_\eff$, one can associate a bounded measurable function $\La_{(0)} \Pa \ph $ such that for any $ \gamma > 0$ and initial condition $x \in E$,
$$
t \mapsto \Pa \ph (X^{(\gamma,1)}_t(x))- \int_0^t \La_{(0)} \Pa \ph (X^{(\gamma,1)}_s(x)) \d s
$$
is a martingale for the natural filtration of $X^{(\gamma,1)}(x)$ (\textit{Nota Bene}: $(\gamma \calL_1 + \calL_0)\Pa = \calL_0 \Pa$, formally). 


iii) For all $\ph \in \calD_\eff$, $\calL_{0} \Pa \ph$ is continuous in $E$ at points of $E_{\eff}$. Explicitly: for any $\ph \in \calD_\eff$ and any $x_0 \in E_\eff$, it holds $\calL_{\mrm{eff}} \b{\ph}(x_0) = \lim_{x \to x_0} \calL_{0} \Pa \b{\ph}(x)$.
\end{Ass}
\begin{Rem}  Point ii) in Assumption~\ref{ass:effgen} amounts to computes the predictable plus martingale decomposition (Doob-Meyer decomposition) of $t \mapsto \Pa \ph (X_t^{(\gamma,1)})$ using It\^o formula or an appropriate variant, and to identify the predictable variant of $\La_{(0)} \Pa \ph (X_t^{(\gamma,1)}) \d t$. Typically, $\Pa \ph$ will belong to a 'natural' domain of the operator $\La_{(0)}$ (\textit{e.g.} differentiable if $\La_{(0)}$ is a first order differential operator on a manifold). Note that defining $\La_{(0)} \Pa \ph$ outside of $E\setminus E_{\eff}$ or even pointwise might not always be straightforward, and we may take the limit considered in Assumption \ref{ass:effgen} point $iii)$ as the new definition of $\La_{(0)} \Pa \ph$ on $E_{\eff}$. 
\end{Rem}

Finally, the following final assumption is necessary to identify the distribution of the effective Markov process on $E_\eff$, using the classical characterization of Markov processes by martingale problems (see \textit{e.g.}~\cite{EK}).
\begin{Ass}[Well-posedness of the effective martingale problem] \label{ass:wellposed} Let $\mu_0 = \lim_{\gamma \to +\infty} \mu_0^\gamma$ be given in $\Proba(E)$. The martingale problem $(\calL_{\mrm{eff}},\calD_\eff)$ for c\`adl\`ag processes with initial condition $\mu_0 \Pa $ is well-posed. 
\end{Ass}
Finally, note that it may be possible that in some cases, slightly different types of assumptions may be used:
\begin{Rem}[On the weakening of assumptions] It might happen that in some cases another method enables to obtain the uniform convergence result of Proposition~\ref{cvunif} (case i)), or, even more directly, another method enables to obtain Step~$2$ of the proof of the main homogenization theorem that states that the effective process must lie in $E_\eff$ (case ii)). We do not consider such examples in the present work, but several assumptions may then be weakened, and our proof can be revisited with less constraints.
\begin{itemize}
\item If case i) (or ii)) happens, Assumption~\ref{ass:main} can be weakened to convergence in distribution only. Assumption~\ref{ass:P_cont} can also be weakened to cases where $\Pa$ is only continuous at points of $E_\eff$. The proof is similar but one has to restrict in Theorem~\ref{main_theorem} to initial conditions $\mu_0^\gamma$ that are either constant (independent of $\gamma$) or that have a limit when $\gamma \to +\infty$ whose support lies in $E_\eff$.
\item If case i) (or ii)) happens, Assumption~\ref{ass:flow} and Assumption~\ref{ass:non_dirac} are non-longer required.
\item If case ii) happens, Assumption~\ref{ass:contper} is also no longer necessary. In case ii) the only remaining \emph{key points are the compact containment and the continuity of $\Pa \ph$ and $\calL_0 \Pa \ph$ at points of $E_\eff$}.
\end{itemize}
\end{Rem}

\subsection{The case of diffusions with Lipschitz coefficients}\label{sec:diff}

Let us now discuss more specific cases. In Section \ref{sec:examples}, we will present few examples that all belong to the following setting, and check that the assumptions are indeed verified.

Assume $E \subset \R^n$ is a closed domain with (piecewise) smooth boundary $\partial E$, and \emph{$E_{\eff} \subset E$ is a smooth sub-manifold of $E$}. In the examples of Section \ref{sec:examples}, $E_\eff$ is moreover a submanifold of $\partial E$. 

The process $X^{(\gamma_1,\gamma_0)}(x)$ is assumed to be a diffusion defined as the strong solution of a Stochastic Differential Equation (SDE) of the general form:
$$
\d X^{(\gamma_1,\gamma_0)}_t = \sqrt{\gamma_1} \sigma_1(X^{(\gamma_1,\gamma_0)}_t) \d W_t + \gamma_1 b_1(X^{(\gamma_1,\gamma_0)}_t) \d t + \gamma_0 b_0(X^{(\gamma_1,\gamma_0)}_t) \d t .
$$
with Lipschitz coefficients $(\sigma_1,b_1,b_0)$; $\calL_{0}={b_0} \cdot \nabla$ is thus a Lipschitz vector field. 

In that context, few (quite benign) assumptions can already be checked. Let us consider the diffusion process on the (slightly) extended state space $E \times [0,1]$ defined by:
$$
t \mapsto (X^{(1,E_t)}_t,E_t)
$$
in which $E_t=E_0=e$ is constant over time. The latter is also a strong solution of the SDE defined on $E \times [0,1]$ with Lipschitz coefficients, since the coefficient $(x,e) \mapsto (e \times b_0(x),0) $ is obviously Lipschitz. We know (see~\cite[Theorem~$13.1$]{williams1979diffusions}) that a strong solution of a SDE with Lipschitz coefficients can be represented by a stochastic map $(x,e,t) \mapsto X^{(1,e)}_t(x)$ which is almost surely continuous. This implies both Assumption~\ref{ass:flow} and Assumption~\ref{ass:contper} (by a routine dominated convergence argument and the Heine-Cantor theorem).

Note that it is necessary at this stage to check that solutions of the considered SDE indeed remain in $E$ for all time. Here is an example of a simple argument that enables to check it.
\begin{Lem} Let $d X_t = b(X_t) \d t + \sigma(X_t) \d W_t$ be a strong solution of a SDE with Lipschitz coefficients on $\R^n$. Assume that there is a smooth function $\xi:\R^n \to \R$ such that $E = \set{x: \, \xi(x) \leq 0}$ and verifying on $\R^n \setminus E$: i) $L \xi \eqdef b \cdot \nabla \xi +\frac12 \sigma \sigma^T : \nabla^2 \xi \leq 0$, as well as ii) $\nabla \xi \sigma =0$. Then $X_t$, starting from $E$ remains in $E$ for all time, that is $E$ is an absorbing set.
\end{Lem}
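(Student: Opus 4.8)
The plan is to apply Itô's formula to a suitably chosen nonnegative penalization of $\xi(X_t)$, and then check that every term of its semimartingale decomposition is either identically zero or has the correct sign. Concretely, fix once and for all a function $\phi \in C^2(\R)$ with $\phi \equiv 0$ on $]-\infty,0]$, with $\phi > 0$ on $]0,+\infty[$, and with $\phi' \geq 0$ everywhere (such a function exists, for instance a smooth flattening of $u \mapsto (u^+)^2$); observe that necessarily $\phi'(u) = \phi''(u) = 0$ for every $u \leq 0$. Since $b$ and $\sigma$ are Lipschitz the strong solution does not explode, so, after localising along the stopping times $\tau_R \eqdef \inf\set{t : \abs{X_t} \geq R}$ and eventually letting $R \to +\infty$, we may apply Itô's formula to $t \mapsto \phi(\xi(X_t))$ ($\xi$ being smooth on all of $\R^n$) and obtain
\[
\phi(\xi(X_t)) = \phi(\xi(X_0)) + \int_0^t \phi'(\xi(X_s)) \b{L\xi}(X_s)\d s + \frac12 \int_0^t \phi''(\xi(X_s)) \abs{\nabla\xi\,\sigma(X_s)}^2 \d s + M_t,
\]
where $M_t = \int_0^t \phi'(\xi(X_s))\, \nabla\xi\,\sigma(X_s) \d W_s$ is a (local) martingale.

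The key step is then to split each integrand according to whether $X_s \in E$ or $X_s \in \R^n \setminus E$. On the event $\set{X_s \in E}$ one has $\xi(X_s) \leq 0$, hence $\phi'(\xi(X_s)) = \phi''(\xi(X_s)) = 0$, so all three integrands vanish there. On the event $\set{X_s \in \R^n \setminus E}$, hypothesis (ii) gives $\nabla\xi\,\sigma(X_s) = 0$, which kills both the $\phi''$-integrand and the integrand of $M$, while hypothesis (i) together with $\phi' \geq 0$ gives $\phi'(\xi(X_s)) \b{L\xi}(X_s) \leq 0$. Consequently $M \equiv 0$, the $\phi''$-integral is zero, and the remaining drift integral is nonpositive, so that for every fixed $t \geq 0$,
\[
0 \leq \phi(\xi(X_t)) \leq \phi(\xi(X_0)) \as
\]
(the localisation passes to the limit immediately since all quantities involved are nonnegative and monotone).

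Finally, if the process starts in $E$ then $\xi(X_0) \leq 0$, hence $\phi(\xi(X_0)) = 0$, and therefore $\phi(\xi(X_t)) = 0$; since $\phi > 0$ on $]0,+\infty[$ this forces $\xi(X_t) \leq 0$, i.e. $X_t \in E$, almost surely for each fixed $t$. Since $t \mapsto X_t$ has continuous paths and $E$ is closed, it suffices to know $X_q \in E$ for all rational $q$ on a single almost sure event, and then $X_t = \lim_{q \to t} X_q \in E$ for all $t \geq 0$; thus $E$ is absorbing.

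I do not expect a serious obstacle here. The only point requiring a little care is the choice of $\phi$: one wants a genuinely $C^2$ function whose value, first derivative and second derivative all vanish on the whole half-line $]-\infty,0]$, so that the argument runs verbatim on $\set{X_s \in E}$ without ever invoking local times; alternatively one could take $\phi(u) = (u^+)^2$ and use the Itô--Tanaka formula, the occupation-time term at level $0$ being harmless for the very same reason (it is supported on $\set{\xi(X_s)=0}\subset E$). The localisation and integrability bookkeeping is routine given the Lipschitz (hence linear-growth) assumption on $b$ and $\sigma$.
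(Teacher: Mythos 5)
Your proof is correct and follows essentially the same route as the paper: the paper applies the It\^o--Tanaka formula directly to $\xi^+(X_t)$, notes that the local time at $0$ vanishes (which, as you observe in your closing remark, is exactly because the quadratic variation density $\abs{\nabla\xi\,\sigma(X_t)}^2$ vanishes off $E$), and concludes $\d \xi^+(X_t)=\one_{\xi(X_t)>0}L\xi(X_t)\d t\leq 0$. Your substitution of a $C^2$ penalization $\phi$ for $u\mapsto u^+$ is only a cosmetic variant that trades the local-time argument for plain It\^o, and the rest (sign of the drift via (i), killing of the martingale via (ii), localization, continuity of paths) matches the paper's one-line argument.
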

\begin{proof} Consider the semi-martingale $Z_t \eqdef \xi^+(X_t)$. By construction, the latter is predictable, so its local time at 0 vanishes, and the It\^o-Tanaka formula implies
$$
\d Z_t = \one_{Z_t >0} L\xi (X_t) \d t \leq 0.
$$
As a consequence, if $x\in E$ then $Z_t =0$ for all times, that is to say $X_t\in E$ for all times.
\end{proof}

In order to check that the dominant process do converge to a random point of $E_\eff$ (perhaps in infinite time, Assumption~\ref{ass:main}), and then to check Assumption~\ref{ass:P_cont}-\ref{ass:non_dirac} on the kernel $\Pa=\Law(X^{(1,0)}(x))$, one may consider a well-chosen time-change of the dominant process, again solution to a strong SDE, but which almost surely hits $E_\eff$ at finite time. This typically happens for instance when $\sigma_1\sigma_1^T$ is strictly positive in $\mathring E$ and vanishes only on $E_\eff$.

In that context, one will also usually consider compactly supported smooth functions
$
\calD_\eff = C_c^\infty(E_{\eff}),
$
which has a separable point separating subset given in a local chart by polynomials with rational coefficients (point i) of Assumption~\ref{ass:effgen}.

In order to prove ii) in Assumption~\ref{ass:effgen}; it is then sufficient to check that $x \mapsto  \Pa \ph (x)$ is, say, twice differentiable so that one can apply It\^o formula. One way to prove the regularity of $\Pa \ph$ is to use the regularity and uniqueness of solutions of the (\textit{e.g.} (hypo)elliptic or parabolic) Partial Differential Equation $\La_{1}\psi=0$ with smooth Dirichlet boundary condition $\psi=\ph$ at $E_\eff$ (we recall that $\Pa\varphi$ is a solution $\calL_1 \Pa \varphi = 0$ in an appropriate sense).

The (Doob-Meyer) finite variation part of $t \mapsto \Pa \ph (X_t^{(\gamma,1)})$ is eventually given by $\La_{(0)} \Pa \ph(X^{(\gamma,1)}_t) \d t$, $\La_{(0)} \Pa \ph$ being continuous on $E$. The value of $\La_{(0)} \Pa \ph$ on $E_{\eff}$ then defines the effective generator $\calL_{\mrm{eff}} \b{\ph}$, which is typically a Levy-type jump process.

Finally, at least when the effective dynamics can be described as a strong solution of a stochastic differential equation with Lipschitz coefficients and driven by, say, a Levy homogeneous process, some quite generic results can give the well-posedness of the martingale problem (see \textit{e.g.} the reference~\cite{kurtz2011}).

\subsection{Main homogenization theorem}
In order to state the main theorem, we first define the pseudo-path topology on path space used in strong noise homogenization problems.

\begin{Def}[Pseudo-paths] Let $E$ denotes a Polish space. To each measurable path $x: \R_+ \to E$, one can associate a probability on $E \times \R_+$ defined by:
\begin{equation}\label{eq:pseudo-def}
\delta_{x_t}(\d x) \e^{-t} \d t \in \Proba(E \times \R_+).
\end{equation}
The subset of probability distributions in $\Proba(E \times \R_+)$ of the form~\eqref{eq:pseudo-def} is called the pseudo-paths space. It is closed for the usual (Polish) topology of $\Proba(E \times \R_+)$ given by convergence in distribution. The pull-back on the equivalence classes of measurable paths equal Lebesgue almost everywhere will be called the pseudo-path topology. The associated space will be denoted $L^0(\e^{-t} \d t, E)$.
\end{Def}
\begin{Rem}[Basic properties]
The fact that the space of pseudo-paths is closed for convergence in distribution and other main results related to this topology are summarized in Section~\ref{sec:MeyerZheng}. In particular, let us recall that pseudo-paths topology inherits the usual sequential characterization of convergence in distribution: a sequence of paths converges for the pseudo-paths topology if and only if $\lim_n \int \ph(x^n_t,t) \e^{-t} \d t = \int \ph(x^\infty_t,t) \e^{-t} \d t  $ for all continuous and bounded $\ph$. This convergence is in fact equivalent to the stronger convergence ``in probability'' (also called ``in measure'') defined by $\lim_n \int_0^\infty \min(1,d(x^n_t,x_t)) \e^{-t} \d t$ where $d$ metrizes $E$. For that reason, the pseudo path space is denoted $L^0(\e^{-t} \d t, E)$.
\end{Rem}

We can now state our main theorem:
\begin{The}[Strong noise homogenization] \label{main_theorem}
Consider $\La_{(1)}$ and $\La_{(0)}$ two Markov generators, and $(\mu_0^\gamma)_{\gamma > 0}$ a family of initial distributions with support in $E$ converging in distribution towards $\mu_0$ a distribution on $E$. Let Assumptions \ref{ass:main}-\ref{ass:P_cont}-\ref{ass:tight}-\ref{ass:flow}-\ref{ass:non_dirac}-\ref{ass:contper}-\ref{ass:effgen}-\ref{ass:wellposed} be fulfilled. Then a c\`adl\`ag process $X^{(\gamma,1)}$ on $E$ solution of the martingale problem of generator $\La_{\gamma}=\gamma \La_{(1)}+\La_{(0)}$ with initial condition $\mu_0^\gamma$ converges in distribution when $\gamma \to + \infty$ for the (Meyer-Zheng) pseudo-path topology to the unique c\`adl\`ag Markov process $X^\infty$ on $E_{\eff}$ solution of the martingale problem of generator $\La_{\eff}$ and initial condition $\mu_0 \Pa$.
\end{The}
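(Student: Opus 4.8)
\emph{Overall strategy and Step 1 (relative compactness).} The plan is to follow the four-step scheme announced in the introduction. First I would show that the laws of the pseudo-paths of the family $(X^{(\gamma,1)})_{\gamma>0}$, viewed as random elements of $\Proba(E\times\R_+)$ through $\delta_{X^{(\gamma,1)}_t}(\d x)\,\e^{-t}\d t$, form a tight family. Since the $\R_+$-marginal is the fixed tight measure $\e^{-t}\d t$, tightness reduces to a uniform control of $\E_{\mu_0^\gamma}\b{\int_0^\infty \one_{X^{(\gamma,1)}_t\notin K'}\,\e^{-t}\d t}$ for a well-chosen compact $K'\subset E$; this follows from the compact containment estimate~\eqref{eq:cont} (applied with a horizon $T$ such that $\e^{-T}\le\eps$) together with Markov's inequality. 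As the space of pseudo-paths is closed in $\Proba(E\times\R_+)$, Prokhorov's theorem then lets me extract along some $\gamma_n\to+\infty$ a subsequence converging in distribution, in the pseudo-path topology, to a limiting pseudo-path $X^\infty$; using the Meyer--Zheng results recalled in Section~\ref{sec:MeyerZheng}, and the uniform bound on the conditional variations obtained in Step~3 below, I may moreover arrange that $X^\infty$ (and the auxiliary martingales) admit càdlàg versions and that the convergence is joint with the processes $\Pa\ph(X^{(\gamma_n,1)}_\cdot)$.

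\emph{Step 2 (the limit lives in $E_\eff$).} Let $f$ be the bounded continuous function supplied by Proposition~\ref{cvunif}. The heart of this step is the estimate $\lim_{\gamma\to+\infty}\E_{\mu_0^\gamma}\b{f(X^{(\gamma,1)}_t)}=0$ for each fixed $t>0$: combined with the pseudo-path convergence (and a Skorokhod representation) it gives $\E\b{\int_0^\infty f(X^\infty_t)\,\e^{-t}\d t}=0$, hence $X^\infty_t\in E_\eff$ for Lebesgue-almost every $t$ almost surely, and then for all $t$ by right-continuity since $E_\eff$ is closed. To prove the estimate I would: condition at a small time $t-\delta$ and use~\eqref{eq:cont} to restrict $X^{(\gamma,1)}_{t-\delta}$ to a compact $K_\eps$ up to probability $\eps$; use the time-rescaling identity (the law of $X^{(\gamma,1)}_\cdot$ equals that of $X^{(1,1/\gamma)}_{\gamma\,\cdot}$) and the Markov property of $X^{(1,1/\gamma)}$ to reduce, for a fixed horizon $T_0$, to $\sup_z\E_z\b{f(X^{(1,1/\gamma)}_{T_0})}$ (controlling the intermediate time again by~\eqref{eq:cont}); let $\gamma\to+\infty$ with $T_0$ fixed, replacing $X^{(1,1/\gamma)}_{T_0}$ by $X^{(1,0)}_{T_0}$ via the continuity of the perturbation, Assumption~\ref{ass:contper}; and finally let $T_0\to+\infty$ using Proposition~\ref{cvunif}, then $\eps\to0$. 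The very same chain of inequalities, comparing $h$ to $f$ on compacts through~\eqref{eq:cont_dom}, shows more generally that $\E_{\mu_0^\gamma}\b{h(X^{(\gamma,1)}_t)}\to0$ for every bounded continuous $h\ge 0$ vanishing on $E_\eff$; this refinement will be reused below.

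\emph{Step 3 (effective martingale problem and initial law).} Fix $\ph\in\calD_\eff$. By Assumption~\ref{ass:effgen}(ii), $M^{\gamma,\ph}_t\eqdef \Pa\ph(X^{(\gamma,1)}_t)-\int_0^t \calL_0\Pa\ph(X^{(\gamma,1)}_s)\,\d s$ is a bounded martingale, and since $\calL_0\Pa\ph$ is bounded the conditional (Meyer--Zheng) variation of $\Pa\ph(X^{(\gamma,1)}_\cdot)$ on $[0,T]$ is at most $T\norm{\calL_0\Pa\ph}_\infty$, uniformly in $\gamma$. The Meyer--Zheng convergence theorem then identifies the limit along the subsequence: $\Pa\ph(X^{(\gamma_n,1)}_\cdot)\to \Pa\ph(X^\infty_\cdot)$ because $\Pa\ph$ is continuous on $E$ (Assumption~\ref{ass:P_cont}); $\int_0^\cdot \calL_0\Pa\ph(X^{(\gamma_n,1)}_s)\,\d s\to\int_0^\cdot \calL_0\Pa\ph(X^\infty_s)\,\d s$ because $\calL_0\Pa\ph$ is bounded, continuous at points of $E_\eff$ (Assumption~\ref{ass:effgen}(iii)) and $X^\infty$ is $E_\eff$-valued; and the limit $M^{\infty,\ph}$ is again a martingale (for its own filtration, hence for that of $X^\infty$, to which it is adapted). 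Since $\Pa\ph=\ph$ and $\calL_0\Pa\ph=\calL_\eff\ph$ on $E_\eff$, this is precisely the statement that $X^\infty$ solves the martingale problem $(\calL_\eff,\calD_\eff)$. For the initial law: the martingale property yields $\E_{\mu_0^\gamma}\b{\Pa\ph(X^{(\gamma,1)}_t)}=\int\Pa\ph\,\d\mu_0^\gamma+O(t)$ uniformly in $\gamma$, while the refined Step~2 estimate applied to $h=\abs{\ph-\Pa\ph}$ gives $\E_{\mu_0^\gamma}\b{\ph(X^{(\gamma,1)}_t)}-\E_{\mu_0^\gamma}\b{\Pa\ph(X^{(\gamma,1)}_t)}\to0$ as $\gamma\to+\infty$; averaging $\frac1\delta\int_0^\delta(\cdot)\,\d t$, passing to the limit along $\gamma_n$ (using $\mu_0^\gamma\to\mu_0$ and dominated convergence), and then letting $\delta\to0$ with the right-continuity of $X^\infty$, gives $\E\b{\ph(X^\infty_0)}=\int\ph\,\d(\mu_0\Pa)$ for all $\ph\in\calD_\eff$, hence $\Law(X^\infty_0)=\mu_0\Pa$ because $\calD_\eff$ separates points (Assumption~\ref{ass:effgen}(i)).

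\emph{Step 4 (conclusion) and main obstacle.} By Assumption~\ref{ass:wellposed} the càdlàg solution of the martingale problem $(\calL_\eff,\calD_\eff)$ with initial law $\mu_0\Pa$ is unique in law; thus every subsequential pseudo-path limit of $(X^{(\gamma,1)})_{\gamma}$ has that law, and together with the relative compactness of Step~1 this proves $X^{(\gamma,1)}\Rightarrow X^\infty$ for the pseudo-path topology. Convergence of the finite-dimensional time-marginals at every time point then follows from the pseudo-path convergence, the fact that the limiting Markov process has no fixed discontinuities, and the compact containment~\eqref{eq:cont} (this is exactly the finite-dimensional consequence of the Meyer--Zheng theory recalled in Section~\ref{sec:MeyerZheng}). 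I expect the main obstacle to be Step~2, namely forcing the limiting pseudo-path into $E_\eff$, and in particular the uniform-in-initial-condition large-time convergence of the dominant process of Proposition~\ref{cvunif} (which genuinely needs the stochastic-flow continuity and the non-degeneracy of the hitting point, Assumptions~\ref{ass:flow}--\ref{ass:non_dirac}); the order of the limits $\gamma\to+\infty$, $T_0\to+\infty$, $\eps\to0$ in the two-scale estimate is where the argument is most delicate. A secondary technical point is the careful use of the Meyer--Zheng machinery in Step~3 to transfer the martingale property to the pseudo-path limit.
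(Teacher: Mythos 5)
Your proposal follows the paper's proof quite closely: the same four-step scheme (tightness via compact containment and Prokhorov, forcing the limit into $E_\eff$ through the time-rescaling/conditioning estimate that combines Proposition~\ref{cvunif}, Assumption~\ref{ass:contper} and Assumption~\ref{ass:tight}, the uniform mean-variation bound $T\norm{\calL_{0}\Pa\ph}_\infty$, and finally uniqueness of the effective martingale problem). Two points, however, are glossed over in a way that leaves genuine gaps.

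First, the c\`adl\`ag representative. You ``arrange that $X^\infty$ admits c\`adl\`ag versions'' by invoking the Meyer--Zheng results, but Theorem~\ref{the:cadlagMZ} applies to \emph{real-valued} processes only: it yields c\`adl\`ag versions of each $\Pa\varphi_i(X^\infty)$, not of the $E$-valued pseudo-path $X^\infty$ itself. Reconstructing a c\`adl\`ag, $E_\eff$-valued representative of $X^\infty$ from these countably many real processes is precisely the paper's Lemma~\ref{lem:cadlag}: it uses that $X^\infty$ is already known to be $E_\eff$-valued (Step~2), that $(\Pa\varphi_i)_i$ separates points of $E_\eff$ (but not of $E$), a passage to the weaker topology induced by $\Psi=(\Pa\varphi_i)_i$ together with a Borel-isomorphism argument (Lemma~\ref{lem:measurab}), and a second use of the compact containment condition to upgrade the c\`adl\`ag property from the weak topology back to the original topology of $E$. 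None of this is automatic, and without it the right-continuity you invoke later (for the initial law and for ``$X^\infty_t\in E_\eff$ for all $t$'') is not available.

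Second, the martingale property of the limit. Your parenthetical justification is backwards: a process that is a martingale for its \emph{own} filtration need not be one for the larger filtration $\sigma(X^\infty_s,\,s\le t)$, and it is the latter that the effective martingale problem requires. Moreover, convergence in the pseudo-path topology alone does not let you pass to the limit in conditional expectations. The paper's mechanism is to extract further so that finite-dimensional distributions converge for Lebesgue-almost every time (Lemma~\ref{lem:finiteMZ}), pass to the limit in the relations $\E\b{(\calM_t-\calM_{t_p})\varphi_p(X_{t_p})\cdots\varphi_1(X_{t_1})}=0$ for times in that full-measure set (using boundedness, continuity of $\Pa\ph$, and continuity of $\calL_{0}\Pa\ph$ at $E_\eff$ together with the $E_\eff$-valuedness of $X^\infty$), and only then use the dense-times characterization of c\`adl\`ag martingales (Theorem~\ref{martingale_criteria}) to conclude for all times. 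Two smaller remarks: your auxiliary function $h=\abs{\ph-\Pa\ph}$ is not defined off $E_\eff$ (and is unnecessary --- the identification $\Law(X^\infty_{0^+})=\mu_0\Pa$ follows from $\E[\calM^\gamma_t]=\mu_0^\gamma\Pa\ph$, a.e.-time convergence and right-continuity); and the convergence of finite-dimensional marginals \emph{at every time} is not a consequence of ``no fixed discontinuities'' plus Meyer--Zheng --- it is the separate Corollary~\ref{conv_simple}, proved by a semigroup induction --- though it is not needed for the theorem as stated.
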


The convergence in law of $X^{\gamma}$ towards $X^\infty$ for the pseudo-path topology implies by general considerations of the pseudo-path topology the convergence in law of $(X_{t_1}, \ldots, X_{t_k}) \in E^k$ for Lebesgue almost all time sequences $(t_1, \ldots ,t_k)$ in $\R_+^k$. We can in fact prove that the convergence is actually true for all time sequences: finite-dimensional distributions do converge.

\begin{Cor}[Convergence of finite-dimensional distributions] \label{conv_simple}
Consider the same notations as in Theorem \ref{main_theorem}. Under the same assumptions, and for all $(t_1, \ldots,t_k)\in \R_+^k$, $k \geq 1$, the random sequence $(X_{t_1}^{(\gamma,1)},\ldots,X_{t_k}^{(\gamma,1)})$ converges in law to $(X^\infty_{t_1}, \ldots,X^\infty_{t_k})$.
\end{Cor}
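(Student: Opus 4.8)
The plan is to bootstrap from what is already available: by Theorem~\ref{main_theorem} and the general properties of the pseudo-path topology recalled in Section~\ref{sec:MeyerZheng}, the vector $\p{X^{(\gamma,1)}_{t_1},\dots,X^{(\gamma,1)}_{t_k}}$ converges in law for Lebesgue-almost every $\p{t_1,\dots,t_k}$, and the limit $X^\infty$ is, by Theorem~\ref{main_theorem}, a c\`adl\`ag process on the closed set $E_\eff$. I would upgrade ``almost every time vector'' to ``every time vector'' for positive times $t_i>0$ (for $t_i=0$ one simply has $X^{(\gamma,1)}_0\sim\mu_0^\gamma\to\mu_0$, so the limiting law of the time-$0$ coordinate is $\mu_0$ itself). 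The scheme is: first treat $k=1$; then reduce the general $k$ to $k=1$ by the Markov property and an induction on $k$. Two facts do the work: (a) for $\psi\in\calD_\eff$ the real functions $t\mapsto\E\b{\Pa\psi(X^{(\gamma,1)}_t)}$ are uniformly (in $\gamma$) Lipschitz; and (b) a concentration statement --- for each fixed $t>0$, $X^{(\gamma,1)}_t$ is asymptotically concentrated near $E_\eff$ --- which is the crux of the argument.

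\emph{One time.} Fix $\psi\in\calD_\eff$ and set $g_\gamma(t):=\E\b{\Pa\psi(X^{(\gamma,1)}_t)}$. Taking expectations in the martingale of Assumption~\ref{ass:effgen}~(ii) gives $g_\gamma(t)=g_\gamma(0)+\int_0^t\E\b{\La_{(0)}\Pa\psi(X^{(\gamma,1)}_s)}\d s$ with $\La_{(0)}\Pa\psi$ bounded, hence (a); moreover $g_\gamma(0)=\int\Pa\psi\,\d\mu_0^\gamma\to\int\Pa\psi\,\d\mu_0$ since $\Pa\psi$ is bounded continuous by Assumption~\ref{ass:P_cont}. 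By Arzel\`a--Ascoli, $(g_\gamma)_\gamma$ is relatively compact for local uniform convergence. Testing the convergence in law $X^{(\gamma,1)}\to X^\infty$ in $L^0(\e^{-t}\d t,E)$ against the bounded continuous functionals $x\mapsto\int_0^\infty\chi(s)\,\Pa\psi(x_s)\,\e^{-s}\d s$, $\chi\in C_b(\R_+)$, shows that every local uniform limit point $g$ of $(g_\gamma)$ satisfies $\int_0^\infty\chi(s)g(s)\,\e^{-s}\d s=\int_0^\infty\chi(s)\,\E\b{\psi(X^\infty_s)}\,\e^{-s}\d s$ (using $\Pa\psi=\psi$ on $E_\eff$), so $g(s)=\E\b{\psi(X^\infty_s)}$ for a.e.\ $s$; both being Lipschitz --- the right side by the martingale problem for $X^\infty$ and boundedness of $\La_\eff\psi$ --- they coincide everywhere. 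Thus $\E\b{\Pa\psi(X^{(\gamma,1)}_t)}\to\E\b{\psi(X^\infty_t)}$ for every $t$ and every $\psi\in\calD_\eff$.

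\emph{Concentration and identification at one time.} This is the main obstacle. I claim $\E\b{f(X^{(\gamma,1)}_t)}\to0$ for each $t>0$, with $f$ the function of Proposition~\ref{cvunif} ($0\le f$ bounded, $f=0$ on $E_\eff$, $f>0$ off $E_\eff$, $\sup_{y\in K}\E\b{f(X^{(1,0)}_s(y))}\to0$ as $s\to\infty$ on every compact $K$). Given $\eta,\eps>0$: compact containment (Assumption~\ref{ass:tight}, in the forms~\eqref{eq:cont} and~\eqref{eq:cont2}) produces nested compacts $K'\subset K$ outside which the process stays, on $[0,t]$, only with probability $\eps$ --- started from $\mu_0^\gamma$, resp.\ from $K'$; Proposition~\ref{cvunif} then gives $T_1$ with $\sup_{z\in K}\E\b{f(X^{(1,0)}_{T_1}(z))}\le\eta$. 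Condition by the Markov property at time $t/2$, use the time rescaling $X^{(\gamma,1)}_{t/2}(y)\overset{d}{=}X^{(1,1/\gamma)}_{\gamma t/2}(y)$, then condition $X^{(1,1/\gamma)}$ at the earlier time $\gamma t/2-T_1$ (positive for $\gamma$ large): the inner map $z\mapsto\E\b{f(X^{(1,1/\gamma)}_{T_1}(z))}$ is within $\eta$ of $\E\b{f(X^{(1,0)}_{T_1}(z))}\le\eta$ uniformly over $z\in K$, by Assumption~\ref{ass:contper} \emph{at the fixed time $T_1$}; and the time window $[0,\gamma t/2-T_1]$, rescaled back, lies in $[0,t]$, so compact containment again bounds the excursion probability there. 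Collecting the estimates, then letting $\gamma\to+\infty$ and $\eta,\eps\to0$, gives the claim --- this is essentially the mechanism behind Step~2 in the proof of Theorem~\ref{main_theorem}. With (a) and the concentration in hand: the tight family $\set{\Law(X^{(\gamma,1)}_t)}_\gamma$ has any weak limit $\nu$ carried by $E_\eff$, so $\nu\Pa=\nu$ by the minimality in Assumption~\ref{ass:main}, while the previous paragraph gives $\int\psi\,\d\nu=\int\psi\,\d(\nu\Pa)=\lim_\gamma g_\gamma(t)=\int\psi\,\d\Law(X^\infty_t)$ for $\psi\in\calD_\eff$; as $\calD_\eff$ is measure-determining on $E_\eff$, $\nu=\Law(X^\infty_t)$, i.e.\ $X^{(\gamma,1)}_t\to X^\infty_t$ in law.

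\emph{Several times.} Induct on $k$, the case $k=1$ being done. For $0<t_1<\dots<t_k$ and $f_1,\dots,f_k\in C_b(E)$, the Markov property at $t_1$ gives $\E\b{\prod_{i}f_i(X^{(\gamma,1)}_{t_i})}=\E\b{f_1(X^{(\gamma,1)}_{t_1})\,\Psi_\gamma(X^{(\gamma,1)}_{t_1})}$ with $\Psi_\gamma(y)=\E_y\b{\prod_{i\ge2}f_i(X^{(\gamma,1)}_{t_i-t_1})}$ uniformly bounded. The key observation is that $\Psi_\gamma(y_\gamma)\to\Psi_\infty(y):=\E_y\b{\prod_{i\ge2}f_i(X^\infty_{t_i-t_1})}$ whenever $y_\gamma\to y\in E_\eff$: this is precisely the inductive hypothesis --- the corollary for $k-1$ and the positive times $t_i-t_1$ --- applied to the initial conditions $(\delta_{y_\gamma})_\gamma$, which converge to $\delta_y$ with $\delta_y\Pa=\delta_y$ since $y\in E_\eff$ (the hypotheses of Theorem~\ref{main_theorem} do not constrain the initial law, and the effective martingale problem is well-posed from $\delta_y$). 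Since $\Law(X^{(\gamma,1)}_{t_1})\to\Law(X^\infty_{t_1})$, which is carried by $E_\eff$, a Skorokhod coupling and dominated convergence then give $\E\b{f_1(X^{(\gamma,1)}_{t_1})\Psi_\gamma(X^{(\gamma,1)}_{t_1})}\to\E\b{f_1(X^\infty_{t_1})\Psi_\infty(X^\infty_{t_1})}$, and the right side equals $\E\b{\prod_{i}f_i(X^\infty_{t_i})}$ by the Markov property of $X^\infty$ together with $\Pa(x_0,\cdot)=\delta_{x_0}$ for $x_0\in E_\eff$. This closes the induction; the only genuinely delicate point in the whole argument is the fixed-time concentration of the third paragraph.
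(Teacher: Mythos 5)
Your architecture is sound, and it is genuinely different from the paper's. The paper first proves a one-time convergence result with \emph{moving} initial laws (Proposition~\ref{undim}) by approximating a given $t$ from the right inside the full-measure set of times supplied by Lemma~\ref{lem:finiteMZ}, controlling the time shift through Dynkin's formula and the bound on $\La_{(0)}\Pa f$; it then proves a two-time lemma (Lemma~\ref{deuxdim}) by tilting the initial law by $f_0$ and re-applying Proposition~\ref{undim}, and finally runs an induction through nested semigroups. You instead obtain the one-time statement by a uniform-Lipschitz/Arzel\`a--Ascoli identification of $t\mapsto\E\b{\Pa\psi(X^{(\gamma,1)}_t)}$ with $t\mapsto\E\b{\psi(X^\infty_t)}$ (a clean alternative to the paper's right-approximation along $J$), combined with the fixed-time concentration --- which is exactly Lemma~\ref{lem:bords}, already proved in Step~$2$, so you may simply cite it rather than re-derive it --- and you handle several times via the Markov property at $t_1$, the continuous convergence $\Psi_\gamma(y_\gamma)\to\Psi_\infty(y)$ for $y_\gamma\to y\in E_\eff$ (the inductive hypothesis with initial laws $\delta_{y_\gamma}$) and a Skorokhod coupling. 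Both routes rely on the same implicit strengthenings that the paper itself uses: the statement must be applied to auxiliary converging families of initial laws (your $\delta_{y_\gamma}$, the paper's tilted $\nu_\gamma$ and $\rho_\gamma$), which tacitly requires well-posedness of the effective martingale problem from those initial laws and the Markov property of $X^\infty$. Note also that, exactly like the paper's own proof, your argument covers strictly positive times only; at $t_i=0$ one has $\Law(X^\gamma_0)\to\mu_0$ whereas $X^\infty_0\sim\mu_0\Pa$, so the time-$0$ marginal statement holds only when $\mu_0$ is supported in $E_\eff$ --- your parenthesis glosses over this in the same way the statement of the corollary does.

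The one genuine gap relative to the stated assumptions is your final identification at a single time: you deduce $\nu=\Law(X^\infty_t)$ from $\int\psi\,\d\nu=\int\psi\,\d\Law(X^\infty_t)$ for all $\psi\in\calD_\eff$ by asserting that $\calD_\eff$ is measure-determining on $E_\eff$. Assumption~\ref{ass:effgen}(i) only provides a vector space of bounded continuous functions containing the constants and a countable point-separating subset, and a point-separating vector space need not determine measures: on $E_\eff=\{a,b,c\}$ the span of $\{1,g\}$ with $g$ injective separates points, yet $\delta_b$ and $\tfrac12(\delta_a+\delta_c)$ have the same integrals against it. The paper sidesteps this by running Proposition~\ref{undim} with an arbitrary $f\in C_b(E)$ and its harmonic extension $\Pa f$ (at the price of implicitly invoking Dynkin's formula and the boundedness of $\La_{(0)}\Pa f$ beyond the literal scope of Assumption~\ref{ass:effgen}(ii)). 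To close your argument you should either observe/require that $\calD_\eff$ is measure-determining on $E_\eff$ --- true in all the paper's examples, where $\calD_\eff$ is essentially $C_c^\infty(E_\eff)$ --- or run your Lipschitz/Arzel\`a--Ascoli step with $\Pa f$ for general bounded continuous $f$, exactly as the paper implicitly does. With either fix, the remainder of your proof stands.
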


\section{Proof of the main theorems}\label{sec:proofs}

In order to prove Theorem~\ref{main_theorem}, we will proceed in four steps. First, using a tightness argument, we will extract a sub-sequence of the considered class of processes which is converging for the pseudo-path topology. We denote the extracted limit $X^\infty$, and we underline that $X^\infty$ is really a pseudo-path, that is the random distribution $\delta_{X_t}( \d x) \e^{-t} \d t$. Next, we will prove that $X^\infty$ takes its values in $E_{\eff}$, which amounts to say that almost surely $X_t^{\infty}\in E_{\eff}$ for Lebesgue almost all $t>0$. Then, using the so-called Meyer-Zheng criteria (see Section~\ref{sec:MeyerZheng}), we will prove that $X^{\infty}$ can be identified with a c\`adl\`ag process, in the sense that the distribution of $X^\infty$ is the push-forward on pseudo-path space of the distribution of a c\`adl\`ag random process. Eventually, we will prove that $X^{\infty}$ is a solution of a martingale problem on $E_{\eff}$ (which have a unique solution by assumption). We conclude that $X^{(\gamma,1)}$ converges in law to $X^{\infty}$ for the pseudo-path topology. We stress that the third step really is unavoidable: limits with respect to the pseudo-path topology,  \emph{without the Meyer-Zheng criteria}, may be too degenerate to inherit the martingale property.

In the penultimate section we will prove that we have furthermore convergence of all finite-dimensional distributions, and in the last section, we will prove Proposition~\ref{cvunif}.

\subsection{{Step~$1$:} Tightness and extraction}

We start by proving that the considered family of processes is tight when considering the pseudo-path topology. To do so, one starts with the following almost trivial remark:
\begin{Lem}\label{lem:comp} Let $(K_n)_{n \geq 0}$ denotes any given increasing sequence of compact subsets of $E$. The set of pseudo-paths defined by:
\[
\calK_{(K)} \eqdef \bigcap_{n \geq 0} \set{x: \, \int_0^{n} \one_{x_t \in K^c_n} \e^{-t} \d t = 0 }
\]
is relatively compact for the pseudo-path topology.
\end{Lem}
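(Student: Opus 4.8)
The plan is to use Prokhorov's theorem: since $L^0(\e^{-t}\d t,E)$ sits inside $\Proba(E\times\R_+)$ with the topology of convergence in distribution, and pseudo-paths form a closed subset, it suffices to show that the family of measures $\set{\delta_{x_t}(\d x)\e^{-t}\d t : x \in \calK_{(K)}}$ is tight in $\Proba(E\times\R_+)$. Thus I must produce, for each $\eps>0$, a compact subset of $E\times\R_+$ carrying mass at least $1-\eps$ uniformly over $x\in\calK_{(K)}$.

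First I would exploit the defining constraints. For $x\in\calK_{(K)}$, the condition $\int_0^n \one_{x_t\in K_n^c}\e^{-t}\d t=0$ says that, up to a Lebesgue-null set of times, $x_t\in K_n$ for all $t\in[0,n]$. Hence the measure $\delta_{x_t}(\d x)\e^{-t}\d t$ restricted to $E\times[0,n]$ is supported (a.e.) on $K_n\times[0,n]$, which is compact. Second I would truncate the time axis: choose $n=n(\eps)$ large enough that $\int_n^\infty \e^{-t}\d t = \e^{-n}\le\eps$. Then for every $x\in\calK_{(K)}$,
\[
\int \one_{(x_t,t)\notin K_n\times[0,n]}\,\e^{-t}\d t
\le \int_0^n \one_{x_t\in K_n^c}\e^{-t}\d t + \int_n^\infty \e^{-t}\d t
= 0 + \e^{-n} \le \eps,
\]
so the compact set $K_n\times[0,n]\subset E\times\R_+$ carries mass $\ge 1-\eps$ for every pseudo-path in $\calK_{(K)}$, uniformly. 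This is exactly Prokhorov tightness, giving relative compactness of $\set{\delta_{x_t}(\d x)\e^{-t}\d t}$ in $\Proba(E\times\R_+)$; intersecting with the closed set of pseudo-paths yields relative compactness of $\calK_{(K)}$ in $L^0(\e^{-t}\d t,E)$.

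There is no serious obstacle here — the lemma is essentially a bookkeeping exercise combining the a.e. containment encoded in $\calK_{(K)}$ with exponential tightness of the time marginal. The only mild point of care is to phrase the containment statement modulo Lebesgue-null sets of times (pseudo-paths being equivalence classes), which is harmless since we only ever integrate against $\e^{-t}\d t$; the support of the associated measure on $E\times[0,n]$ is then contained in the closure of $\set{x_t : t\in[0,n], x_t\in K_n}\times[0,n]\subseteq K_n\times[0,n]$, which is compact because $K_n$ is.
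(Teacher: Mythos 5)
Your proposal is correct and follows essentially the same route as the paper: identify each $x\in\calK_{(K)}$ with the measure $\delta_{x_t}(\d x)\e^{-t}\d t$, use the defining a.e.\ containment together with the exponential tail to put mass at least $1-\eps$ on the compact $K_{n_\eps}\times[0,n_\eps]$, and conclude by Prokhorov (plus closedness of the pseudo-path set). No gaps.
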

\begin{proof} Let us consider $\calK_{(K)}$ as a subset of the space $\Proba(E \times \R_+)$, that is we identify $x \in \calK_{(K)}$ with probability $ \delta_{x_t}(\d x) \e^{-t} \d t$. Let $\eps > 0$ be given, arbitrarily small. Let $n_\eps$ be such that $\e^{-n_\eps} \leq \eps$. One has by construction of $\calK_{(K)}$ that for any $x \in \calK_{(K)}$
$$
\int_{E}\int_{0}^{\infty} \one_{ (x,t) \in \p{K_{n_\eps} \times [0,n_\eps]}^c} \delta_{x_t}(\d x) \e^{-t} \d t \leq \e^{-n_\eps} \leq \eps.
$$
By Prokhorov theorem, $\calK_{(K)}$ is thus relatively compact.
\end{proof}

We can now proceed and prove that under Assumption~\ref{ass:tight}, the considered process is tight under the pseudo-path topology.
\begin{Lem}Under Assumption~\ref{ass:tight}, the family $\p{X^{(\gamma,1)}}_{\gamma > 0}$ is tight under the pseudo-path topology.
\end{Lem}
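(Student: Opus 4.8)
The plan is to reduce tightness of $\bigl(X^{(\gamma,1)}\bigr)_{\gamma>0}$ under the pseudo-path topology to the abstract compactness criterion of Lemma~\ref{lem:comp} together with the compact containment Assumption~\ref{ass:tight}, via Prokhorov's theorem. Since $L^0(\e^{-t}\d t,E)$ is Polish, a family of laws on this space is tight if and only if it is relatively compact, so it suffices to exhibit, for every $\eps>0$, a relatively compact subset $\calK\subset L^0(\e^{-t}\d t,E)$ such that $\P\bigl(X^{(\gamma,1)}\in\calK\bigr)\ge 1-\eps$ uniformly in $\gamma$.

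The construction of $\calK$ is where Assumption~\ref{ass:tight} (in the form~\eqref{eq:cont}, applied to the converging family of initial distributions $\mu_0^\gamma$) enters. For each $n\ge 0$ I would apply~\eqref{eq:cont} with horizon time $T=n$ and tolerance $\eps_n\eqdef \eps\, 2^{-(n+1)}$, starting from the tight family $(\mu_0^\gamma)_\gamma$: this yields a compact $K_n\subset E$ (which we may take increasing in $n$, replacing $K_n$ by $\bigcup_{m\le n}K_m$) such that
$$
\sup_{\gamma>0}\P_{\mu_0^\gamma}\bigl[\exists t\in[0,n],\ X^{(\gamma,1)}_t\in K_n^c\bigr]\le \eps\, 2^{-(n+1)}.
$$
Now set $\calK=\calK_{(K)}$ as in Lemma~\ref{lem:comp}, which is relatively compact for the pseudo-path topology. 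On the event $\set{\forall n\ge 0,\ \forall t\in[0,n],\ X^{(\gamma,1)}_t\in K_n}$ the trajectory satisfies $\int_0^n \one_{X^{(\gamma,1)}_t\in K_n^c}\e^{-t}\d t=0$ for every $n$, hence lies in $\calK_{(K)}$; taking complements and a union bound over $n$ gives
$$
\sup_{\gamma>0}\P_{\mu_0^\gamma}\bigl[X^{(\gamma,1)}\notin\calK_{(K)}\bigr]\le \sum_{n\ge 0}\eps\,2^{-(n+1)}=\eps.
$$
This establishes tightness.

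Two minor points need care rather than real work. First, one must make sure the ``bad'' events $\set{\exists t\in[0,n],\ X^{(\gamma,1)}_t\in K_n^c}$ are measurable and that the pathwise inclusion into $\calK_{(K)}$ respects the Lebesgue-a.e.\ equivalence defining $L^0(\e^{-t}\d t,E)$; both are routine since $X^{(\gamma,1)}$ is c\`adl\`ag, so the supremum over $t$ can be taken over a countable dense set, and the integral condition defining $\calK_{(K)}$ is invariant under modification on a Lebesgue-null set. Second, one should note that~\eqref{eq:cont} is stated for $\gamma_0\in\set{0,1}$ and here we use $\gamma_0=1$, $\gamma_1=\gamma$, which is exactly the process $X^{(\gamma,1)}$. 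There is no genuine obstacle: the only ``content'' is the bookkeeping that ties the sequence of horizons and tolerances in Lemma~\ref{lem:comp} to repeated applications of Assumption~\ref{ass:tight}, and the observation that a pathwise containment statement for all $n$ simultaneously forces membership in $\calK_{(K)}$.
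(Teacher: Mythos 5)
Your proof is correct and follows essentially the same route as the paper: apply the compact containment assumption (in its form for the initial laws $\mu_0^\gamma$) with horizon $T=n$ and tolerance of order $\eps 2^{-n}$ to build an increasing sequence of compacts, then invoke Lemma~\ref{lem:comp} and a union bound. The extra remarks on measurability and the a.e.\ equivalence are fine but not needed beyond what the paper already does.
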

\begin{proof} Let $\eps > 0$ be given, arbitrarily small. By Lemma~\ref{lem:comp}, it sufficient in order to conclude to prove that there exists a increasing sequence $K_{\eps,n} \subset E$ of compact sets such that
$$
\P\b{ X^{(\gamma,1)} \in \calK_{(K_{\eps})}^c } \leq \eps .
$$

For this purpose, using Assumption \ref{ass:tight}, we choose $K_{\eps,n}$ such that we have $ \P( \exists t \in [0,n], \, X^{(\gamma,1)}_t \in K_{\eps,n}^c ) \leq \eps 2^{-n}.$

Then by construction:
$$
\P \b{ X^{(\gamma,1)} \in \calK_{(K_\eps)}^c } \leq \sum_n \P\b{\exists t \in [0,n], \, X^{(\gamma,1)}_t \in K_{\eps,n}^c } \leq \eps.
$$

\end{proof}

As a consequence, any increasing sequence converging to infinity has an increasing sub-sequence denoted $(\gamma_p)_{p \geq 0}$ such that $X^{(\gamma_p,1)}$, when embedded as a pseudo-path in $\Proba(E\times \R_+)$, converges when $p$ goes to infinity in distribution towards a distribution in $\Proba(E\times \R_+)$. Since we know that the set of pseudo-paths is closed, the limit is a pseudo-path and can be represented as a random process denoted $X^{\infty}$. However, $X^{\infty}$ may not have a c\`adl\`ag representative at this stage (this will be taken care of in Step~$3$ below). 

\subsection{{Step~$2$:} The limit process lies in $E_{\eff}$}

The goal of this step is to prove that the considered limit process takes its values in $E_{\eff}$ in the sense that:
$$
\E\b{ \int_0^{+\infty} \one_{X^\infty_t \in E_{\eff}} \e^{-t}\d t  } = 1 .
$$

We now use Proposition \ref{cvunif} to prove that the full process approaches $E_{\eff}$ when $\gamma$ approaches infinity:
\begin{Lem}\label{lem:bords} Let $f$ denotes a continuous non-negative function on $E$ vanishing on $E_{\eff}$ and strictly positive on $E \setminus E_{\eff}$ characterizing the uniform convergence of the dominant process towards $E_{\eff}$ in Proposition~\ref{cvunif}. Let $\mu_0^\gamma$ denotes the initial condition of $X_t^{(\gamma,1)}$. Under Assumptions \ref{ass:tight} and \ref{ass:contper}, we have for all $t>0$ convergence towards $E_{\eff}$ for large $\gamma$:
$$\E_{\mu_0^\gamma}(f(X_t^{(\gamma,1)}))\underset{\gamma\to+\infty}{\longrightarrow} 0.$$
\end{Lem}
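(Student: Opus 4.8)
The plan is to reduce, via the time-change invariance noted in the remark following Assumption~\ref{ass:main}, the quantity $\E_{\mu_0^\gamma}(f(X_t^{(\gamma,1)}))$ to $\E_{\mu_0^\gamma}(f(X^{(1,1/\gamma)}_{\gamma t}))$. Indeed $\Law(X^{(k\gamma,k\gamma_0)}_{s/k}) = \Law(X^{(\gamma,\gamma_0)}_s)$, so choosing $k = 1/\gamma$, $\gamma_0 = 1$, $s = t$ (and averaging over the initial condition) gives $\Law_{\mu_0^\gamma}(X^{(1,1/\gamma)}_{\gamma t}) = \Law_{\mu_0^\gamma}(X^{(\gamma,1)}_t)$. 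Now the elapsed time $\gamma t$ goes to infinity — which is where the large-time behaviour of the dominant process (Proposition~\ref{cvunif}) must enter — while the perturbation intensity $1/\gamma$ goes to $0$ — which is where Assumption~\ref{ass:contper} must enter. The difficulty is that Assumption~\ref{ass:contper} only controls the perturbation over a \emph{fixed} time horizon, so one cannot directly compare $X^{(1,1/\gamma)}_{\gamma t}$ with $X^{(1,0)}_{\gamma t}$.

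To get around this, I would peel off a fixed window of length $S$ at the end of the trajectory using the (time-homogeneous) Markov property of $X^{(1,1/\gamma)}$: for $\gamma > S/t$,
$$\E_{\mu_0^\gamma}\big(f(X^{(1,1/\gamma)}_{\gamma t})\big) = \E_{\mu_0^\gamma}\big(g_\gamma(X^{(1,1/\gamma)}_{\gamma t - S})\big), \qquad g_\gamma(x) := \E_x\big(f(X^{(1,1/\gamma)}_S)\big).$$
Fix $\eps > 0$. First, the compact containment bound~\eqref{eq:cont} (horizon $t$, $\gamma_0 = 1$) produces a compact $K_\eps \subset E$; rewriting through the same time-change (the law of $X^{(1,1/\gamma)}_{\gamma t - S}$ under $\mu_0^\gamma$ equals that of $X^{(\gamma,1)}_{t - S/\gamma}$, and $t - S/\gamma \in [0,t]$ for $\gamma \ge S/t$) gives $\P_{\mu_0^\gamma}(X^{(1,1/\gamma)}_{\gamma t - S} \in K_\eps^c) \le \eps$ uniformly in $\gamma$. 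Second, Proposition~\ref{cvunif} applied to the compact $K_\eps$ lets us pick $S = S_\eps$ large enough that $\sup_{x \in K_\eps}\E_x(f(X^{(1,0)}_{S_\eps})) \le \eps$. Third, Assumption~\ref{ass:contper} with test function $f$, time $S_\eps$, and compact $K_\eps$ yields $\gamma_\eps$ with $\sup_{x \in K_\eps}\big|g_\gamma(x) - \E_x(f(X^{(1,0)}_{S_\eps}))\big| \le \eps$ for $\gamma \ge \gamma_\eps$, whence $\sup_{x \in K_\eps} g_\gamma(x) \le 2\eps$.

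Combining these, for $\gamma \ge \max(\gamma_\eps, S_\eps/t)$, splitting the expectation according to whether $X^{(1,1/\gamma)}_{\gamma t - S_\eps} \in K_\eps$ and bounding $g_\gamma$ by $\|f\|_\infty$ off $K_\eps$,
$$\E_{\mu_0^\gamma}\big(f(X^{(\gamma,1)}_t)\big) \le 2\eps + \|f\|_\infty\,\P_{\mu_0^\gamma}\big(X^{(1,1/\gamma)}_{\gamma t - S_\eps} \in K_\eps^c\big) \le (2 + \|f\|_\infty)\eps.$$
Since $\eps$ is arbitrary and $\|f\|_\infty$ a fixed finite constant, $\E_{\mu_0^\gamma}(f(X_t^{(\gamma,1)})) \to 0$, as claimed (note $t>0$ is used only to ensure $\gamma t \to +\infty$). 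The only genuinely delicate point is respecting the order of the choices — $K_\eps$ depends on $\eps$ and $t$ alone, then $S_\eps$ on $K_\eps$, then $\gamma_\eps$ on $S_\eps$ and $K_\eps$ — and checking that the time-change identity legitimately transports the compact-containment estimate from horizon $t$ for $X^{(\gamma,1)}$ to the single instant $\gamma t - S_\eps$ for $X^{(1,1/\gamma)}$; both are routine once arranged in this order.
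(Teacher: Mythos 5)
Your proof is correct and follows essentially the same route as the paper: the time-change identity $\Law(X^{(\gamma,1)}_t)=\Law(X^{(1,1/\gamma)}_{\gamma t})$, the Markov property to isolate a terminal window of fixed length, compact containment (Assumption~\ref{ass:tight}) to localize the state at time $\gamma t - S$, Proposition~\ref{cvunif} for the dominant process over that window, and Assumption~\ref{ass:contper} to compare the weakly perturbed process with the dominant one, with exactly the same order of choices ($K_\eps$, then $S_\eps$, then $\gamma_\eps$). Your telescoping via $g_\gamma$ is in fact slightly cleaner than the paper's decomposition, which carries an extra (redundant) term $\E_{\mu_0^\gamma}(f(X^{(1,0)}_{\gamma t}))$, but the substance is identical.
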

\begin{proof}
First notice that
\begin{align*}\E_{\mu_0^\gamma}(f(X_t^{(\gamma,1)}))&=\langle \mu_0^\gamma, e^{t(\gamma \La_{(1)}+\La_{(0)})}f\rangle \\ &=\langle \mu_0^\gamma, e^{t\gamma( \La_{(1)}+\frac{1}{\gamma}\La_{(0)})}f\rangle \\ &=\E_{\mu_0^\gamma}(f(X_{\gamma\times t}^{(1,\frac{1}{\gamma})})),\end{align*}
which amounts to an elementary time change.

Therefore, for any  $0<\delta<\gamma\times t$ and any compact subset $K$ of $E$ we have:

\begin{align*}
&\abs{\E_{\mu_0^\gamma}\left(f\left(X_{\gamma\times t}^{(1,\frac{1}{\gamma})}\right)\right)}\\
&\leq \abs{\E_{\mu_0^\gamma}\left(f\left(X_{\gamma\times t}^{(1,\frac{1}{\gamma})}\right)\right) 
- \E_{\mu_0^\gamma}\left(f\left(X_{\gamma \times t}^{(1,0)}\right)\right)}+\abs{\E_{\mu_0^\gamma}\left(f\left(X_{\gamma\times t}^{(1,0)}\right)\right)}\\
&=\abs{\E_{\mu_0^\gamma}\b{ \E_{X_{\gamma\times t-\delta}^{(1,\frac{1}{\gamma})}}\left(f\left(X_{\delta}^{(1,\frac{1}{\gamma})}\right)\right) 
- \E_{X_{\gamma\times t-\delta}^{(1,0)}}\left(f\left(X_{\delta}^{(1,0)}\right)\right)  }} \\
&\ \ \ \ \ \  +\abs{\E_{\mu_0^\gamma}\left(f\left(X_{\gamma\times t}^{(1,0)}\right)\right)}\\
&\le\abs{\E_{\mu_0^\gamma}\b{\E_{X_{\gamma\times t-\delta}^{(1,\frac{1}{\gamma})}}\left(f\left(X_{\delta}^{(1,\frac{1}{\gamma})}\right)\right) 
- \E_{X_{\gamma\times t-\delta}^{(1,\frac{1}{\gamma})}}\left(f\left(X_{\delta}^{(1,0)}\right)\right)}} \\
& \ \ \ \ \ \ +\abs{\E_{\mu_0^\gamma}\b{\E_{X_{\gamma\times t-\delta}^{(1,\frac{1}{\gamma})}}\left(f\left(X_{\delta}^{(1,0)}\right)\right) 
- \E_{X_{\gamma\times t-\delta}^{(1,0)}}\left(f\left(X_{\delta}^{(1,0)}\right)\right)}} \\
& \ \ \ \ \ \  \ \ \ \ \ \ +\abs{\E_{\mu_0^\gamma}\left(f\left(X_{\gamma\times t}^{(1,0)}\right)\right)} \\
&\leq \sup_{x \in K }\abs{ \E(f(X^{(1,\frac{1}{\gamma})}_\delta(x))) 
- \E(f(X^{(1,0)}_\delta(x)))} + 2 \norm{f}_\infty \P_{\mu_0^\gamma}\b{X_{ t-\delta/\gamma}^{(\gamma,1)} \in K^c} \\
& \ \ \ \ \  +2\times \sup_{x \in K}\E\b{f(X^{(1,0)}_{\delta}(x))} + \norm{f}_\infty \P_{\mu_0^\gamma}\b{X_{ t-\delta/\gamma}^{(\gamma,1)} \in K^c}   \\
& \ \ \ \ \ \  \ \ \   + \norm{f}_\infty \P_{\mu_0^\gamma}\b{X_{ t-\delta/\gamma}^{(\gamma,0)} \in K^c}+\sup_{x \in K}\abs{\E\left(f\left(X_{\gamma\times t}^{(1,0)}(x)\right)\right)} +\|f\|_{\infty}\, \mu_0^\gamma(K^c) \numberthis{} \label{inegalite}
\end{align*}

Let $\eps >0$ be given, arbitrarily small. Using Assumption~\ref{ass:tight} (compact containment), one can choose $K=K_\eps$ such that, uniformly in $\delta,\gamma$:
$$ \max\p{ \P_{\mu_0^\gamma}\b{X_{ t-\delta/\gamma}^{(\gamma,1)} \in K_\eps^c },\P_{\mu_0^\gamma}\b{X_{ t-\delta/\gamma}^{(\gamma,0)} \in K_\eps^c}, \mu_0^\gamma(K_\eps^c)} \leq \frac{\eps}{10\|f\|_{\infty}}.
$$
Using Proposition \ref{cvunif} (uniform time convergence of the dominant process), we consider now $\delta_\eps \equiv \delta_{\eps,K_\eps}$ big enough such that 
$$\sup_{x \in K_\eps}\E\b{f(X^{(1,0)}_{\delta_\eps}(x))}\leq \frac{\eps}{10}$$
Now using Assumption \ref{ass:contper} (uniformly continuous perturbation), we now consider $\gamma_\eps \equiv \gamma_{\eps,K_\eps} >0$ such that for all $\gamma > \gamma_\eps$: 
$$\sup_{x \in K_\eps}\abs{ \E(f(X_{\delta_\eps}^{(1,1/\gamma)}(x))) 
- \E(f(X_{\delta_\eps}^{(1,0)}(x)))}\leq \frac{\eps}{5}$$
Taking $\delta= \delta_\eps$ inequality \eqref{inegalite} thus implies for all $\gamma>\max\left(\frac{\delta_\eps}{t}, \gamma_\eps\right)$: 
\begin{equation*}\abs{\E_{\mu_0^\gamma}\left(f\left(X_{t}^{(\gamma,1)}\right)\right)}\leq \eps .\end{equation*}
\end{proof}

We have proved in Step~$1$ that there exists $(\gamma_n)_n$ an increasing sequence of positive numbers approaching infinity such that we have $X^{(\gamma_n,1)}\overset{\La}{\longrightarrow} X^{\infty}$ for the pseudo-path topology. By characterization of the pseudo-path convergence, it implies that
$$\mathbb{E}\int_0^{+\infty} f\left(X_t^{(\gamma_n,1)}\right)\e^{-t} \d t \underset{n\to+\infty}{\longrightarrow}\mathbb{E}\int_0^{+\infty} f(X_t^{\infty}) \e^{-t} \d t.
$$
By dominated convergence and Lemma \ref{lem:bords} above, the term on the left converges to $0$. But $f^{-1}(\{0\})=E_{\eff}$ and $f\geq 0$, so eventually $X^{\infty}\in E_{\eff}$ almost surely as a pseudo-path (that is for Lebesgue almost all $t$).

\subsection{{Step~$3$:} The limit is c\`adl\`ag}

We will prove that the distribution of $X^{\infty}$ has a c\`adl\`ag representative taking values in $E_{\eff}$. To do so, we will use the so-called \emph{Meyer-Zheng criteria}. The latter ensures that a sequence of \emph{real valued} random processes that i) converge in distribution for the pseudo-path topology, and ii) has uniformly bounded \emph{mean variations} (see below for a definition), has a pseudo-path limit which possesses a c\`adl\`ag representative.

\begin{Def}[Mean variation] Let $T > 0$ denotes an horizon time, and $(Z_t)_{t \geq 0}$ a c\`adl\`ag real valued random process with $Z_t \in L^1(\P)$ for all $t$. The mean variation of $Z$ over $[0,T]$ is given by
\begin{align*}
V_T(Z) \eqdef & \sup_{0=t_0\leq t_1 \ldots \leq t_K \leq T} \sum_{k=0}^K \E \abs{\E\b{ Z_{t_{k+1}}-Z_{t_k} \mid \sigma\p{ Z_{t}, \, t \leq t_k }} }, \\
= & \underset{\abs{H}\leq 1}{\sup}\mathbb{E} \left( \int_0^{T} H_t \, \d Z_t \right),
\end{align*}
where in the above the supremum is taken over predictable processes (with respect to the natural filtration of $Z$) taking value in $[-1,1]$.
\end{Def}
We recall in Section~\ref{sec:MeyerZheng} several general facts underlying the Meyer-Zheng approach to limit theorems for processes. In particular, processes with bounded mean variations are quasi-martingales, and their bounded mean variation is precisely equal to the total variation of their predictable finite-variation part (in the sense of the Doob-Meyer decomposition). 

Step~$3$ (the limit process $X^\infty$ has a -- unique -- c\`adl\`ag representative) will be a direct consequence of Lemma~\ref{lem:cadlag}. We start with a technical remark that will be used in the proof of Lemma~\ref{lem:cadlag}.

\begin{Lem}\label{lem:measurab} Let $F$ denotes a Polish space, and let $\Psi: F \to \R^I$, with $I$ countable, be an injective bounded continuous function. Consider the weaker topology on $F$ induced by the pull-back by $\Psi$ of the product (Polish) topology of $\R^I$, for instance as defined by the distance
\begin{equation}\label{eq:dist_new}
\widetilde{d}(x,y) = \sum_{i \in I} 2^{-n_i}  \abs{\Psi_i(x) - \Psi_i(y) },
\end{equation}
where $i \mapsto n_i \in \N$ is injective. Denote by $\widetilde F$ the completion of $F$ with respect to the latter distance. Then i) $\Psi$ defines a homeomorphism on $\widetilde F$ for the weaker topology defined by $\widetilde d$, and ii) all Borel sets of $F$ defined by the strong original topology (defined by $d$) are also Borel in $\widetilde F$ for the weaker new topology (defined by $\widetilde d$).
\end{Lem}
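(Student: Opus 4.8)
The plan is to recognize the metric $\widetilde d$ as a pull-back under $\Psi$ and reduce the statement to elementary facts about metric completions together with one descriptive-set-theoretic input. First I would set $\rho(u,v) = \sum_{i\in I} 2^{-n_i}\abs{u_i-v_i}$ on $\R^I$, a well-defined finite quantity on the closed box $B = \prod_{i\in I}[a_i,b_i]\supset\Psi(F)$ obtained from the boundedness of $\Psi$ (and this finiteness is exactly what makes $\widetilde d$ a genuine metric). Since $\Psi$ is injective, $\Psi:(F,\widetilde d)\to(\Psi(F),\rho)$ is then a \emph{bijective isometry} by construction, and $(B,\rho)$ is a compact --- in particular complete --- metric space, being a countable product of compact intervals.

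For part i), I would argue that, transported by the isometry $\Psi$, the abstract completion $\widetilde F$ of $(F,\widetilde d)$ is identified isometrically with the completion of $(\Psi(F),\rho)$, which is concretely the $\rho$-closure $\overline{\Psi(F)}$ inside the complete space $B$. Because $\Psi$ is uniformly continuous (an isometry) on the $\widetilde d$-dense subset $F\subset\widetilde F$, it extends uniquely to a continuous map on $\widetilde F$, and this extension is precisely the above identifying isometry onto $\overline{\Psi(F)}$; in particular it is a homeomorphism of $(\widetilde F,\widetilde d)$ onto its (compact) image, which is assertion i).

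For part ii), I would compare Borel structures through the set inclusion $\iota:(F,d)\to(\widetilde F,\widetilde d)$. This map is continuous, because the $\widetilde d$-topology restricted to $F$ is coarser than the $d$-topology (each $\Psi_i$ being $d$-continuous), and it is evidently injective. Both $(F,d)$ and $(\widetilde F,\widetilde d)$ are Polish spaces: the former by hypothesis, the latter because it is the completion of $(F,\widetilde d)$, which is separable (a countable $d$-dense subset of $F$ is $\widetilde d$-dense in $F$, hence in $\widetilde F$). I would then invoke the Lusin--Souslin theorem --- an injective Borel image of a Borel subset of a Polish space, taken in a Polish space, is again Borel --- applied to $\iota$: for every $d$-Borel set $A\subseteq F$, the set $\iota(A)=A$ is $\widetilde d$-Borel in $\widetilde F$, which is exactly ii).

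The only genuinely delicate step is ii): it is tempting to quote the slogan that two comparable Polish topologies have the same Borel sets, but $(F,\widetilde d)$ itself need not be complete --- only its completion $\widetilde F$ is --- so the clean argument must pass through $\widetilde F$ and exploit injectivity of the inclusion via Lusin--Souslin. Part i), by contrast, is routine bookkeeping with completions once the isometric picture $(F,\widetilde d)\cong(\Psi(F),\rho)$ is set up.
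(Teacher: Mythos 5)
Your proof is correct and follows essentially the same route as the paper: part i) by identifying the completion $\widetilde F$ isometrically with $\overline{\Psi(F)}$, and part ii) by the Lusin--Souslin theorem (the paper cites it as \cite[Theorem~15.1]{kechris2012classical}, that injective Borel maps between Polish spaces are Borel isomorphisms onto their image). Applying that theorem to the inclusion $(F,d)\hookrightarrow(\widetilde F,\widetilde d)$ rather than to $\Psi$ itself is only a cosmetic difference, given the isometric identification.
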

\begin{proof} Point i) is trivial: $\widetilde{d}$ is a metric because the family $(\Psi_i)_{i \in I}$ is separating. Then $\widetilde F$ is simply identified with $\overline{\Psi(F)}$ by the isometry defined by $\widetilde d$.

Point ii) is more subtle. A standard result in descriptive set theory (\cite[Theorem~$15.1$]{kechris2012classical}) states that on Polish spaces, injective Borel maps are in fact Borel isomorphisms, in the sense that they map Borel sets to Borel sets. We know that $\Psi: (F,d) \to \R^I$ is continuous, it is thus Borel and so is its inverse $\Psi^{-1}$ defined on Polish $\overline{\Psi(F)}$. Let $B \subset F$ be Borel for the strong topology $\Phi(B)$ is Borel in $\overline{\Psi(F)}$, and $B \subset F$ is eventually Borel in Polish $(\tilde{F},\tilde d)$.
\end{proof}

\begin{Lem} \label{lem:cadlag} Let $X^n$ be a sequence of c\`adl\`ag processes on $E$ converging in distribution to a pseudo-path $X^{\infty}$ for the pseudo-path topology, and let $E_{\eff}\subset E$. We assume the following:
\begin{enumerate}
    \item The sequence $X^n$ satisfies the compact containment condition~\eqref{eq:cont} (the index $n$ playing the role of $\gamma$).
    \item The limit pseudo-path $X^\infty$ almost surely has support in $E_\eff$.
    \item There exists a countable family of bounded continuous map $\Psi_i: E\to\R$, $i\in I$ such that for all $T \geq 0$ and all $i \in I$:
    $$
    \sup_n V_T( \mathrm{Law}(\Psi_i(X^n)) < + \infty .
    $$
    In other words the (quasi-martingale) c\`adl\`ag processes $\Psi_i(X^n)$ have a uniformly (w.r.t. $n$) bounded mean variation on finite time intervals (Meyer--Zheng criteria). 
    \item The family $(\Psi_i)_{i \in I}$, separates the points of $E_{\eff}$.
\end{enumerate}
Then 
the distribution of $X^\infty$ is given by the distribution of a c\`adl\`ag process taking its values in $E_{\eff}$.
\end{Lem}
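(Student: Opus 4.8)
The plan is to transfer the problem, through the point-separating family $(\Psi_i)_{i\in I}$, into the product space $\R^I$ (Polish for the product topology), apply the classical Meyer--Zheng theorem there, and then pull the resulting c\`adl\`ag process back onto $E_\eff$ by means of Lemma~\ref{lem:measurab}.

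First I would set $\Psi=(\Psi_i)_{i\in I}: E\to\R^I$ and note that, the $\Psi_i$ being bounded and continuous, $\Psi$ induces a continuous map of pseudo-path spaces $L^0(\e^{-t}\d t,E)\to L^0(\e^{-t}\d t,\R^I)$; hence $\Psi(X^n)\to\Psi(X^\infty)$ in law for the pseudo-path topology, and by hypothesis~$2$ the limit is, for Lebesgue-a.e.\ $t$ and almost surely, valued in $\Psi(E_\eff)$, a Borel subset of $\R^I$ by Lemma~\ref{lem:measurab}~ii). Next, for each finite $J\subset I$ the $\R^J$-valued c\`adl\`ag process $(\Psi_i(X^n))_{i\in J}$ has, by hypothesis~$3$, coordinate mean variations on $[0,T]$ bounded uniformly in $n$, and uniformly integrable marginals (the $\Psi_i$ being bounded); the Meyer--Zheng theorem (recalled in Section~\ref{sec:MeyerZheng}, see~\cite{MZ84}) then gives that the pseudo-path limit $(\Psi_i(X^\infty))_{i\in J}$ has a c\`adl\`ag (quasi-martingale) representative $Z_J$. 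These are automatically consistent in $J$, since a c\`adl\`ag process is determined in law by its pseudo-path law --- the latter fixes the finite-dimensional distributions at Lebesgue-a.e.\ time tuples, hence, by right-continuity, at all of them --- and since a path into $\R^I$ is c\`adl\`ag for the product topology exactly when each of its finite-dimensional projections is, Kolmogorov's extension theorem glues the $Z_J$ into an $\R^I$-valued c\`adl\`ag process $Z$ carrying the pseudo-path law of $\Psi(X^\infty)$. (If the version of Meyer--Zheng recalled in Section~\ref{sec:MeyerZheng} is already stated for Polish-valued processes with a countable separating family of bounded-mean-variation functionals, this gluing step can be skipped.)

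Finally I would transport back. Since $(\Psi_i)$ separates the points of $E_\eff$, $\Psi|_{E_\eff}$ is injective, and Lemma~\ref{lem:measurab} applied with $F=E_\eff$ and the metric~\eqref{eq:dist_new} yields a Polish completion $\widetilde{E_\eff}$ of $E_\eff$ on which $\Psi$ extends to a homeomorphism onto the closure $\overline{\Psi(E_\eff)}\subset\R^I$, Borel structure being preserved. As $Z$ is c\`adl\`ag and, for a.e.\ $t$, a.s.\ in $\Psi(E_\eff)$, right-continuity along the co-null set of ``good'' times puts $Z_t\in\overline{\Psi(E_\eff)}$ for \emph{all} $t$, so $\widetilde Z:=\Psi^{-1}(Z)$ is a $\widetilde d$-c\`adl\`ag process valued in $\widetilde{E_\eff}$ whose pseudo-path law equals that of $X^\infty$. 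To see that $\widetilde Z$ never leaves $E_\eff$ and is c\`adl\`ag for the original topology I would invoke the compact containment~\eqref{eq:cont}: the sets $\calK_{(K)}$ of Lemma~\ref{lem:comp} are closed for the pseudo-path topology (lower semicontinuity of $\one_{K^c}$), so~\eqref{eq:cont} passes to the limit and confines $\widetilde Z$, with probability at least $1-\eps$ and for a.e.\ $t\in[0,T]$, to a fixed $d$-compact set $\widetilde K=K_{\eps,T}\cap E_\eff\subset E_\eff$; on the compact $\widetilde K$ the metrics $d$ and $\widetilde d$ induce the same topology, so $\widetilde K$ is $\widetilde d$-closed, and right-continuity through the good times forces $\widetilde Z_t\in\widetilde K\subset E_\eff$ for all $t$ while the same local compactness upgrades $\widetilde d$-c\`adl\`ag to $d$-c\`adl\`ag; letting $\eps\to0$ and $T\to+\infty$ finishes the argument.

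I expect the main obstacle to be the Meyer--Zheng step of the second paragraph: upgrading the uniform bound on the \emph{coordinate-wise} mean variations (hypothesis~$3$) to an honest c\`adl\`ag limit in the infinite-dimensional space $\R^I$ --- i.e.\ either making the finite-dimensional Meyer--Zheng plus Kolmogorov gluing watertight, or justifying a Polish-valued form of the theorem --- together with the final check (via compact containment) that the limit path does not escape $E_\eff$ at the exceptional times and is regular for the strong topology rather than only the weak one.
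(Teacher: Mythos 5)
Your proposal is correct and follows essentially the same route as the paper: coordinate-wise Meyer--Zheng for the real-valued processes $\Psi_i(X^n)$, pull-back through the point-separating family via Lemma~\ref{lem:measurab} to obtain a process c\`adl\`ag for the weaker metric on the completion of $E_\eff$, and compact containment combined with portmanteau/lower semicontinuity to confine the path to compacts of $E_\eff$ on which the two topologies coincide, which upgrades it to a c\`adl\`ag $E_\eff$-valued process for the original topology. The only real deviation is your Kolmogorov-extension gluing in $\R^I$, which the paper bypasses by constructing, on the same probability space, the canonical representatives $Z^i_t=\lim_{h\to 0}\frac1h\int_t^{t+h}\Psi_i(X^\infty_s)\d s$ for each $i$ and intersecting the countably many full-measure sets --- exactly the simplification you anticipate in your parenthetical remark.
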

\begin{proof}
First remark that Point~$2$ in the lemma's assumption means that any process representative of $X^\infty$ is such that $\e^{-t}\d t \otimes \P(\d \omega)$-almost everywhere we have $X^\infty_t(\omega) \in E_\eff$. Equivalently, $X^\infty$ as a random pseudo-path almost surely belongs to the pseudo-path subspace $L^0(\e^{-t} \d t, E_\eff) \subset L^0(\e^{-t} \d t, E)$. 

On the other hand, by continuity, the distribution of the bounded random pseudo-path $\Psi_i(X^n) \in \R$ converges when $n\to +\infty$ towards the distribution of the random pseudo-path $\Psi_i(X^\infty)$. For each $i \in I$, Point~$3$ in the assumption is exactly the uniform mean variation condition of Meyer-Zheng theory (see Theorem~\ref{the:cadlagMZ} in Section~\ref{sec:MeyerZheng}), which ensures that the distribution of the pseudo-path $\Psi_i(X^\infty)$ is in fact supported by $\D(\R_+,\R)$ (the space of c\`adl\`ag paths on $\R$). This implies that the following canonical c\`adl\`ag representative of $\Psi_i(X^\infty)$ 
$$
Z^i_t = \lim_{h \to 0} \frac1h \int_t^{t+h} \Psi_i(X^\infty_s) \d s
$$
does exist almost surely, and is by construction a real-valued c\`adl\`ag process. The set of elements $(\omega,t) \in \Omega \times \R_+$ such that $\Psi_i(X^\infty_t(\omega))=Z^i_t(\omega)$ has $\e^{-t}\d t \otimes \P$-measure $1$, and since $I$ is countable, the intersection for $i \in I$ is again of $\e^{-t}\d t \otimes \P$-measure $1$.

  Using Point~$4$ ($\Psi$ is injective on $E_\eff$) and Lemma~\ref{lem:measurab}, one can then consider the extension of the inverse $\Psi^{-1}: \overline{\Psi(E_\eff)} \to \widetilde{E}_\eff$ which defines an isometry when one considers \textit{e.g.} the distance~\eqref{eq:dist_new} associated with the product Polish topology of $\R^I$. With respect to this weaker topology, the process $t \mapsto \widetilde{X}^\infty \eqdef \Psi^{-1}(Z_t) \in \widetilde{E}_\eff$ is almost surely c\`adl\`ag. By Lemma~\ref{lem:measurab} point ii), Borel sets of $E_\eff$ for weaker topology are also Borel for the initial stronger topology, so that if $\ph$ is a continuous bounded function for the strong topology, the integral $\int_0^{+\infty}\ph(\widetilde{X}^\infty_t,t) \e^{-t} \d t $ is well defined and it holds by construction that
  $$
  \int_0^{+\infty}\ph(\widetilde{X}^\infty_t,t) \e^{-t} \d t = \int_0^{+\infty}\ph(X^\infty_t,t) \e^{-t} \d t .
  $$
  $\widetilde{X}^\infty$ is thus a measurable representative of the random pseudo-path $X^\infty$.

It remains, in order to conclude the proof, to show that $\widetilde{X}^\infty$ takes its values in $E_\eff$ and is c\`adl\`ag for the stronger original topology of $E$. In order to do so, we are to use a compactness argument (Point~$1$) and show that $\widetilde X^\infty$ has, almost surely, left and right accumulation points in the original stronger topology of $E$. This is not guaranteed in general without the compact containment condition because the weaker topology of $\widetilde{E}_\eff$ may allow values in $\widetilde{E}_\eff \setminus  E_\eff$.

 More precisely, we now claim that with the compact containment condition (Point $1$), for any $T >0$, the set $\set{\widetilde X^\infty_t(\omega), \, t \in [0,T]}$ is $\P(\d \omega)$-almost surely a subset of $E$ that is relatively compact for the stronger original topology. If this claim is true, $\P(\d \omega)$-almost surely, for any (left or right) converging sequence $t_m \to t_\infty$, there is a sub-sequence (we do not change notation for the sub-sequence) such that $\widetilde X^\infty_{t_m}(\omega)$ converges in $E$ and in fact in $E_{\eff}$ since the set is closed. But $t \mapsto \widetilde X^\infty_t(\omega)$ is c\`adl\`ag for the weaker topology of $\widetilde{E}_\eff$, so the limit is unique and given by $\widetilde X^\infty_{t_\infty^{\pm}}(\omega)$. Hence   $\widetilde X^\infty$ is c\`adl\`ag in $E_\eff$ and the proof of the whole lemma is complete.

It remains to prove the claim above. We consider the original topology on $E_\eff$. We first remark that by the portmanteau theorem, the map on pseudo-paths $x \mapsto \int_0^T \one_{x_t \in K^c} \e^{-t} \d t$ is lower semi-continuous for $K$ closed, so that $\set{ x: \, \int_0^T \one_{x_t \in K^c} \e^{-t} \d t }$ is open and applying the portmanteau theorem again albeit in pseudo-path space yields
$$
\P\b{ \int_0^T \one_{ X^\infty_t \in K^c} \e^{-t} \d t > 0 } \leq \liminf_n \P\b{ \int_0^T \one_{ X^n_t \in K^c} \e^{-t} \d t > 0 }.
$$
The compact containment condition implies that the right-hand side in the above is smaller than any $\eps$ for a well chosen $K \equiv K_\eps$. Using a routine Borel-Cantelli argument obtained by considering the events constructed for $K_{2^{-p}}$, $p \in \N$, one gets that:
$$
\P\b{ \exists p \geq 0: \,  \int_0^T \one_{X^\infty_t \in K_{2^{-p}}^c} \e^{-t} \d t = 0 } = 1 .
$$
Since $X^{\infty}$ is a pseudo-path of $E_{\eff}$, we can here replace $K_{2^{-p}}$ by $K_{2^{-p}}\cap E_{\eff}$ that is still a compact since $E_{\eff}$ closed. The above statement holds for $X^\infty$ taking value in pseudo-paths space, so that one can replace $X^\infty$ with $\widetilde X^\infty$. In the weaker topology of $\widetilde E_\eff$, the compact $K_{2^{-k}}\cap E_{\eff} \subset \widetilde E_\eff$ is again compact. Since $\widetilde X^\infty$ is c\`adl\`ag, it implies that $\P(\d \omega)$-almost surely $\widetilde X^\infty_t(\omega) \in K_{P(\omega)}$ for $t\in [0,T]$ and a random integer $P(\omega)$. This precisely means that the set $\set{\widetilde X^\infty_t(\omega), \, t \in [0,T]}$ is relatively compact for the strong original topology of $E_\eff$.  Our claim is proved, and so is the lemma.

\end{proof}

Let $\varphi_i$ denotes a countable family in $\calD_{\eff}$ that separates points. We now consider $$X^n=X^{(\gamma_n,1)}, \qquad \Psi_i=\Pa \varphi_i,$$
and try to apply Lemma~\ref{lem:cadlag}. We have already proved in Step~$2$ the second condition while the first one is Assumption~\ref{ass:tight} and the fourth one is part i) of Assumption~\ref{ass:effgen}. For the third one, we write:
$$V_{T}(\Psi_i(X^{n}))=\underset{\abs{H}\leq 1}{\sup}\int_0^{T}\mathbb{E}\left( H_t \, \d \Psi_i(X_t^{n}) \right),$$
where the supremum is taken over predictable processes (with respect to the natural filtration of $X^n$) taking value in $[-1,1]$. Since the martingale term disappear:
\begin{align*}V_{T}(\Psi_i(X^{n}))&=\underset{\abs{H}\leq 1}{\sup}\int_0^{T}\mathbb{E}(H_t \, \La^{(\gamma_n,1)} \Pa \ph_i(X_t^{n}) )\mathrm{d}t\\
&=\underset{\abs{H}\leq 1}{\sup}\int_0^{T}\mathbb{E}(H_t \, \La_{0} \Pa \ph_i(X_t^{n}) )\mathrm{d}t\\
& = \int_{0}^{T}\mathbb{E}(\abs{\La_{(0)}\Pa \ph_i(X_t^{n})})\mathrm{d}t \\
& \leq T \norm{\La_{(0)}\Pa \ph_i}_\infty,
\end{align*}
that is finite using part ii) of Assumption \ref{ass:effgen}. This yields the uniform mean variation condition. We stress that the above estimate is the key ingredient of the strong homogenization theorem.

Lemma \ref{lem:cadlag} gives us therefore that $X^{\infty}$ is almost surely c\`adl\`ag with trajectories in $E_{\eff}$.

\subsection{{Step~$4$}: The limit is the unique solution of a martingale problem}

We can now prove that $X^{\infty}$ is solution of a specific martingale problem. We already know by Assumption~\ref{ass:effgen} that for all $\gamma>0$ and $\varphi \in \calD_{\eff}$, $X^{\gamma}=X^{(\gamma,1)}$ is solution of: $$ \calM^{\gamma}:t\mapsto\Pa \varphi(X_t^{\gamma})-\int_0^t \calL_{0} \Pa \varphi (X_s^{\gamma})\mathrm{d}s \ \ \  \text{is a $\sigma(X^\gamma_s,s \leq t)$-martingale.}$$

We would like to let $\gamma$ goes to infinity in this problem, and get that:
$$\calM^{\infty}:t\mapsto \Pa\varphi(X_t^{\infty})-\int_0^t \calL_{0} \Pa \varphi(X_s^{\infty})\mathrm{d}s \ \ \  \text{is a $\sigma(X^\infty_s,s \leq t)$-martingale}.$$
To prove this, we will first prove using a general result on convergence in distribution for the pseudo-paths topology that Lebesgue almost all finite-dimensional distributions of $\calM^{\gamma_k}$ converges to those of $\calM^{\infty}$. In particular this will enable to identify the distribution of $X^\infty_0$. We will then invoke a routine lemma characterizing c\`adl\`ag martingales (Theorem~\ref{martingale_criteria}) in order to check that $\calM^{\infty}$ is indeed a martingale. The martingale problem uniqueness in Assumption~\ref{ass:wellposed} enables to identify the distribution of $X^\infty$ and then to conclude.

Using the general Lemma~\ref{lem:finiteMZ} for converging distributions for the pseudo-path topology, we know that there exists a subset with full Lebesgue measure $J \subset \R_+$ and a sub-sequence $\gamma_k \to +\infty$ such that for each $t_1, \ldots , t_p$ the joint variable
\[
(X^{\gamma_p},X^{\gamma_k}_{t_1}, \ldots,X^{\gamma_k}_{t_p}) \xrightarrow[p \to +\infty]{\Law} (X^{\infty},X^{\infty}_{t_1}, \ldots,X^{\infty}_{t_p})
\]
converges in distribution in $L^0(\e^{-t} \d t ,E)\times E^p$. Since by Assumption~\ref{ass:effgen} point iii), $\calL_{0} \Pa \varphi$ is bounded and continuous in $E$ at each point of $E_{\eff}$, the map
\[
x \mapsto \int_0^t \calL_{0} \Pa \varphi (x_s) \d s
\]
is bounded and continuous at points of $L^0(e^{-t}\mathrm{d}t,E_{\eff})$ for the pseudo-path topology.
In the same way, Assumption~\ref{ass:P_cont} ensures that the map $x \mapsto \Pa\varphi(x)$ is bounded and continuous. Combining the above results, we get that Lebesgue almost all (for times taken in $J$) finite-dimensional distributions of $\calM^{\gamma_p}$ do converges to those of $\calM^{\infty}$.

We now claim that this proves that:
$$
\Law(X^\infty_{0^+}) = \Pa \mu_0.
$$
First recall that the trajectories of $X^{\infty}$ are almost surely in $E_{\eff}$, and we have almost surely that $\Pa \varphi(X^{\infty})=\varphi(X^{\infty})$.
On the other hand by construction $\E\b{\calM^{\gamma}_t}=\E(\Pa \ph (X^\gamma_0))=\mu_0^{\gamma} \Pa \ph$ for each $t$, so that passing to the limit for $t \in \J$
$$
\E\b{\calM^{\infty}_t} = \mu_0 \Pa \ph
$$
by Assumption~\ref{ass:P_cont}, and since $\calM^{\infty}_{0^+} = \ph(X^\infty_{0^+})$ the claim is proved.

We now turn to the martingale property. Using the convergence of finite-dimensional distributions on $J$, one gets: for all $0\leq t_1 < ... < t_p < t$ in $J$, for all $\varphi_1, ... , \varphi_p$ continuous bounded, and for all $p$:
$$\xymatrix{
    \mathbb{E}[ (\calM_t^{\gamma}-\calM_{t_p}^{\gamma})\varphi_p(X_{t_p}^{\gamma}) ... \varphi_1(X_{t_1}^{\gamma}) ] = \ar[d]  & 0 \ar[d] \\
    \mathbb{E}[ (\calM_t^{\infty}-\calM_{t_p}^{\infty})\varphi_p(X_{t_p}^{\infty}) ... \varphi_1(X_{t_1}^{\infty}) ] = & 0 .
  }$$ 
Since the processes $\calM_{\cdot\wedge T}^{\infty}$ is a c\`adl\`ag process, we have $\calM^{\infty}$ is a martingale for the filtration of $X^\infty$ by Theorem \ref{martingale_criteria}.

Finally, recalling again that $\Pa \varphi(X^{\infty})=\varphi(X^{\infty})$ (trajectories of $X^{\infty}$ are almost surely in $E_{\eff}$), we have that:
$$t\mapsto \varphi(X_t^{\infty})-\int_0^t \calL_{0} \Pa \varphi (X_s^{\infty}) \ \ \ \text{is a $\sigma(X^\infty_s,s \leq t)$-martingale.}$$
Assumption~\ref{ass:wellposed} ensures eventually that the distribution of $X^{\infty}$ is the unique càdlàg solution of the above martingale problem.

Routinely combining this identification of the limit with the tightness of $\left(X^{\gamma}\right)_{\gamma>0}$ in pseudo-path space obtained in Step~$1$, we find that $X^{\gamma}$ indeed converges in law for the pseudo-path topology to $X^\infty$, the unique (in law) process on $E_{\eff}$ of generator $\La_{\eff}$: the Theorem \ref{main_theorem} is proved.

\subsection{Proof of Corollary \ref{conv_simple}}

In order to prove the convergence of the finite-dimensional laws for all times, we first need to prove that we have convergence of the semi-group for all times starting from a moving initial condition.
\begin{Pro}\label{undim}
    We assume as before that $X^{\gamma}$ starts from the initial condition $\mu_0^{\gamma}$ that converges in law in distribution towards $\mu_0$. Then for all $t>0$ we have that $X_t^{\gamma}$ converges in law towards $X_t^{\infty}$ of initial condition $\mu_0\Pa$.
\end{Pro}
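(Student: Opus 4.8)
The plan is the following. I would first fix $t>0$ and reduce the statement to the identification of subsequential weak limits: the family of time-$t$ marginals $(\Law(X_t^{(\gamma,1)}))_{\gamma>0}$ is tight in $\Proba(E)$, since the compact containment estimate~\eqref{eq:cont} bounds $\P_{\mu_0^\gamma}[X_t^{(\gamma,1)}\in K_{\eps,t}^c]$ uniformly in $\gamma$. Hence along any sequence $\gamma_k\to+\infty$ one extracts a subsequence (not relabelled) with $\Law(X_t^{(\gamma_k,1)})\to\rho$ weakly, and it suffices to show $\rho=\Law(X_t^\infty)$.

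The key intermediate step is a uniform-in-$\gamma$ equicontinuity of the ``slow observables''. For $\varphi\in\calD_\eff$ put $\phi_\gamma(s):=\E_{\mu_0^\gamma}[\Pa\varphi(X_s^{(\gamma,1)})]$. By point~ii) of Assumption~\ref{ass:effgen} the process $\Pa\varphi(X_\cdot^{(\gamma,1)})-\int_0^\cdot\La_{(0)}\Pa\varphi(X_r^{(\gamma,1)})\,\d r$ is a martingale, so $\phi_\gamma(s)-\phi_\gamma(u)=\int_u^s\E[\La_{(0)}\Pa\varphi(X_r^{(\gamma,1)})]\,\d r$ and therefore $|\phi_\gamma(s)-\phi_\gamma(u)|\le\|\La_{(0)}\Pa\varphi\|_\infty|s-u|$, a Lipschitz bound uniform in $\gamma$; together with $\|\phi_\gamma\|_\infty\le\|\varphi\|_\infty$ this makes $(\phi_\gamma)_\gamma$ relatively compact in $C(\R_+)$ for uniform convergence on compacts (Arzel\`a--Ascoli). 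On the other hand Theorem~\ref{main_theorem} gives $X^{(\gamma,1)}\to X^\infty$ for the pseudo-path topology as $\gamma\to+\infty$, and testing this convergence against the bounded continuous function $(x,s)\mapsto\Pa\varphi(x)\psi(s)$ on $E\times\R_+$ (legitimate since $\Pa\varphi$ is bounded and continuous by Assumption~\ref{ass:P_cont}) yields, for every bounded continuous $\psi:\R_+\to\R$,
$$
\int_0^\infty\phi_\gamma(s)\psi(s)\e^{-s}\,\d s\;\xrightarrow[\gamma\to+\infty]{}\;\int_0^\infty\phi_\infty(s)\psi(s)\e^{-s}\,\d s,\qquad\phi_\infty(s):=\E_{\mu_0\Pa}[\varphi(X_s^\infty)],
$$
where I used that $X^\infty$ takes values in $E_\eff$, on which $\Pa\varphi=\varphi$. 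Since $\phi_\infty$ is itself Lipschitz (the same martingale argument applied to the limit martingale $\calM^\infty$ of Step~$4$, using that $\La_{(0)}\Pa\varphi$ is bounded on $E_\eff$), every locally uniform limit point of $(\phi_\gamma)$ agrees with $\phi_\infty$ Lebesgue-almost everywhere, hence — both being continuous — everywhere. Thus $\phi_\gamma\to\phi_\infty$ locally uniformly, and in particular $\phi_\gamma(t)\to\phi_\infty(t)$.

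I would then identify $\rho$. With $f$ the function of Proposition~\ref{cvunif} (continuous, bounded, $f>0$ off $E_\eff$, $f=0$ on $E_\eff$) and Lemma~\ref{lem:bords}, one has $\E[f(X_t^{(\gamma_k,1)})]\to0$, so $\int_E f\,\d\rho=0$ and $\rho(E_\eff)=1$. Then for every $\varphi\in\calD_\eff$, using $\rho(E_\eff)=1$ and $\Pa\varphi=\varphi$ on $E_\eff$,
$$
\int_{E_\eff}\varphi\,\d\rho=\int_E\Pa\varphi\,\d\rho=\lim_k\phi_{\gamma_k}(t)=\phi_\infty(t)=\E_{\mu_0\Pa}[\varphi(X_t^\infty)].
$$
Since $\calD_\eff$ contains a countable family separating the points of the Polish space $E_\eff$ (point~i) of Assumption~\ref{ass:effgen}), this pins down $\rho=\Law(X_t^\infty)$ among probability measures on $E_\eff$. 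As the subsequential limit is the same for every $\gamma_k\to+\infty$, the tight family $\Law(X_t^{(\gamma,1)})$ converges weakly to $\Law(X_t^\infty)$, which proves the proposition.

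The hardest part is the middle step: upgrading the ``Lebesgue-almost-every-time'' convergence that the pseudo-path topology delivers for free into convergence at the \emph{fixed} time $t$. The engine is the uniform Lipschitz bound on $s\mapsto\E[\Pa\varphi(X_s^{(\gamma,1)})]$ — exactly the Meyer--Zheng mean-variation estimate already used in Step~$3$, coming from the boundedness of $\La_{(0)}\Pa\varphi$ — which absorbs the fast oscillations of $X^{(\gamma,1)}$ and turns weak-$*$ convergence of these functions into locally uniform convergence. One should also keep track that $\La_{(0)}\Pa\varphi$ and $f$ are genuinely bounded, which is guaranteed by Assumption~\ref{ass:effgen}~ii) and Proposition~\ref{cvunif} respectively.
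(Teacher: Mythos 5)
Most of your route is sound, and it is genuinely different from the paper's. The paper fixes $t$, picks times $t_n\in\J$ (the full-measure set where finite-dimensional laws converge) decreasing to $t$, and controls the time shift by a four-term decomposition: the Dynkin bound $|\E[\Pa f(X^{\gamma}_{t})-\Pa f(X^{\gamma}_{t_n})]|\le (t_n-t)\sup_E|\La_{(0)}\Pa f|$ on the prelimit side, right-continuity of $X^\infty$ on the limit side, and the term $f-\Pa f$ killed near $E_\eff$ via Assumption~\ref{ass:P_cont} and compact containment. You instead upgrade the a.e.-time convergence into convergence at every $t$ by showing $s\mapsto\E[\Pa\varphi(X^{\gamma}_s)]$ is uniformly Lipschitz (same Dynkin/mean-variation estimate), extracting locally uniform limits by Arzel\`a--Ascoli, and identifying them through the pseudo-path convergence; combined with tightness of the time-$t$ marginals and Lemma~\ref{lem:bords} this is a clean alternative mechanism, and steps 1--4 of your argument are correct.

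The genuine gap is the final identification. You conclude $\rho=\Law(X_t^\infty)$ from the equality $\int\varphi\,\d\rho=\E_{\mu_0\Pa}[\varphi(X_t^\infty)]$ for $\varphi\in\calD_\eff$, invoking only that $\calD_\eff$ contains a countable subset separating points of $E_\eff$ (Assumption~\ref{ass:effgen}~i)). But a (countable) point-separating family of bounded continuous functions is \emph{not} measure-determining on a non-compact Polish space: on $\R$, two distinct laws whose characteristic functions coincide on $[-1,1]$ (P\'olya-type examples) give identical integrals to the separating family $\set{\cos(qx),\sin(qx):\ q\in\Q\cap[-1,1],\ q\neq0}$. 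Nothing in the standing assumptions makes $\calD_\eff$ an algebra, or dense in $C_b(E_\eff)$, or otherwise separating for probability measures, so ``this pins down $\rho$'' is unjustified at the stated level of generality. The paper sidesteps this by proving $\E[f(X_t^{\gamma})]\to\E[f(X_t^\infty)]$ directly for \emph{every} bounded continuous $f$ on $E$, via the decomposition $f=(f-\Pa f)+\Pa f$ sketched above (note that at that point the paper itself quietly uses boundedness of $\La_{(0)}\Pa f$ for such general $f$, beyond the literal scope of Assumption~\ref{ass:effgen}~ii)). Your proof is repaired either by adding the mild hypothesis that $\calD_\eff$ determines probability measures on $E_\eff$ --- true in all the paper's examples, where $\calD_\eff=C_c^\infty$ --- or by running your locally-uniform-convergence step with $\Pa f$ for arbitrary bounded continuous $f$, i.e.\ accepting the same implicit strengthening the paper uses.
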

\begin{proof} As in the proof of Step~$4$ above, since $X^{\gamma}$ converges in law to $X^{\infty}$ for the Meyer--Zheng topology, there exists an extraction $(\gamma_k)$ and $\J \subset \R_+$ of full Lebesgue measure such that the finite-dimensional distributions of $(X_t^{\gamma_k})_{t\in I}$ converge to those of $(X_t^{\infty})_{t\in \J}$. 

    Let $t>0$ be given, there exists $t_n$ elements of $\J$ converging to $t$ from the right. For $f$ a continuous bounded function on $E$, we now consider the following decomposition:
    \begin{align*}
    |\mathbb{E}_{\mu_0^{\gamma_k}}(f(X_{t}^{\gamma_k}) -\mathbb{E}_{\mu_0\Pa}(f(X_{t}^{\infty}))|
    &\leq |\mathbb{E}_{\mu_0^{\gamma_k}}(f(X_{t}^{\gamma_k})-\Pa f(X_{t}^{\gamma_k}))| \\ & \ \ \  \ \     +|\mathbb{E}_{\mu_0^{\gamma_k}}(\Pa f(X_{t}^{\gamma_k})-\Pa f(X_{t_n}^{\gamma_k}))| \\
      & \ \ \ \ \ \ \   +|\mathbb{E}_{\mu_0^{\gamma_k}}(\Pa f(X_{t_n}^{\gamma_k})) - \mathbb{E}_{\mu_0\Pa}(\Pa f(X_{t_n}^{\infty}))|\\ & \ \ \ \ \ \ \ \ \    +|\mathbb{E}_{\mu_0\Pa}( f(X_{t_n}^{\infty}))-\mathbb{E}_{\mu_0\Pa}(f(X_{t}^{\infty}))| 
\end{align*}
We remind that for almost all $\omega$, we have: $$f(X_{t_n}^{\infty}(\omega))\underset{n\to+\infty}{\longrightarrow} f(X_{t}^{\infty}(\omega))$$ by continuity of $f$ and right continuity of $s\mapsto X_s^{\infty}(\omega)$. Since all is trivially bounded, we have by dominated convergence: $$\mathbb{E}_{\mu_0\Pa}(f(X_{t_n}^{\infty}))\underset{n\to+\infty}{\longrightarrow} \mathbb{E}_{\mu_0\Pa}(f(X_{t}^{\infty})).$$
Using this last result and point ii) of Assumption \ref{ass:effgen}, we now consider $n$ such that $|\mathbb{E}_{\mu_0\Pa}(f(X_{t_n}^{\infty}))-\mathbb{E}_{\mu_0\Pa}(f(X_{t}^{\infty}))|\le\eps$ \, and \,  $(t_n-t) \ \underset{E}{\sup} |\La_{(0)}\Pa f|\leq \eps. $ We fix $n$ until the end of the proof.

We have thus bounded by $\eps$ the last term for all $k$, to conclude we only have to prove that the other terms are inferior to $\eps$ for $k$ big enough.

Since $t_n\in I$ we have $X_{t_n}^{\gamma_k}\underset{k\to+\infty}{\overset{\La}{\longrightarrow}} X_{t_n}^{\infty}$ and we bound the third term by $\eps$ for $k$ big enough.

We have for all $\eta>0$ that $d(E_{\eff},X_t^{\gamma_k})\le\eta$ with high probability for $k$ big enough. But $(f-\Pa f)_{|E_{\eff}}=0$, and Assumption~\ref{ass:P_cont} gives us that $f-\Pa f$ is continuous at points of $E_{\eff}$ with limit $0$, so taking $\eta$ small enough and using the compact containment hypothesis, we get that $|f(X_{t}^{\gamma_k})-\Pa f(X_{t}^{\gamma_k})|$ is small with a high probability for $k$ big enough. We can therefore bound the first term of by $\eps$ by taking $k$ big enough.

We now only have to bound the second term to conclude. Using Dynkin's formula (since $\La^{\gamma}\Pa f=\La_{(0)}\Pa f$) we have:
\begin{align*}|\mathbb{E}_{\mu_0^{\gamma_k}}(\Pa f(X_{t}^{\gamma_k})-\Pa f(X_{t_n}^{\gamma_k}))|&\leq \left| \int_t^{t_n} \mathbb{E}_{\mu_0^{\gamma_k}}\left(\La_{(0)} \Pa f (X_s^{\gamma}) \right) \right| \\ &\leq (t_n-t)\sup_{E} |\La_{(0)}\Pa f| \\ &\leq \eps.\end{align*}

Since any extraction of $X^{\gamma}$ still converges to $X^{\infty}$ for the Meyer--Zheng topology, we can make for every extraction a second one for which we have the convergence of the one dimensional distributions: we therefore have convergence in law of $X_t^{\gamma}$ to $X_t^{\infty}$ for all $t>0$.
\end{proof}
We will now invoke a technical Lemma that we will combine with the Markov property to prove the convergence of the finite dimensional distributions. 

\begin{Lem} \label{deuxdim}
    We assume as before that $X^{\gamma}$ starts from the initial condition $\mu_0^{\gamma}$ that converges in law towards a distribution denoted $\mu_0$. We furthermore assume here that the support of $\mu_0$ is included in $E_{\eff}$, so that $\mu_0\Pa=\mu_0$. Then, when $\gamma$ goes to $+\infty$, the couple $(X_0^{\gamma},X_t^{\gamma})$ converges in distribution towards the couple $(X_0^{\infty},X_t^{\infty})$. 
\end{Lem}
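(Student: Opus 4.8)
Since the pair $(X_0^\gamma,X_t^\gamma)$ is tight on $E\times E$ (its first marginal converges by $\mu_0^\gamma\to\mu_0$, its second converges by Proposition~\ref{undim}), and products $f\otimes g$ with $f,g\in C_b(E)$ separate probability measures on $E\times E$, it is enough to prove that for every $f,g\in C_b(E)$ and every fixed $t>0$ (the case $t=0$ being immediate, since then $\mu_0^\gamma\to\mu_0=\mu_0\Pa=\Law(X_0^\infty)$),
\[
\E_{\mu_0^\gamma}\!\big[f(X_0^\gamma)\,g(X_t^\gamma)\big]\ \xrightarrow[\gamma\to+\infty]{}\ \E_{\mu_0}\!\big[f(X_0^\infty)\,g(X_t^\infty)\big].
\]
Set $R_t^\gamma g(x)\eqdef \E_x[g(X_t^\gamma)]$ and $R_t^\infty g(x)\eqdef \E_{\Pa(x,\cdot)}[g(X_t^\infty)]$, the latter being well defined because $\Pa(x,\cdot)$ is carried by $E_\eff$. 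Conditioning on $X_0^\gamma$ and using the Markov property of $X^\gamma$ gives $\E_{\mu_0^\gamma}[f(X_0^\gamma)g(X_t^\gamma)]=\int_E f(x)\,R_t^\gamma g(x)\,\mu_0^\gamma(\d x)$, so the whole statement reduces to passing to the limit in this integral.

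\textbf{Main step: $R_t^\gamma g\to R_t^\infty g$ uniformly on compacts.} First I would show that this convergence holds \emph{continuously}, that is, $R_t^{\gamma_n}g(x_n)\to R_t^\infty g(x)$ whenever $\gamma_n\to+\infty$ and $x_n\to x$ in $E$. This is nothing but Proposition~\ref{undim} applied to the processes $X^{\gamma_n}$ started from the deterministic initial conditions $\mu_0^{\gamma_n}=\delta_{x_n}$, which converge to $\delta_x$: the conclusion is $X_t^{\gamma_n}\Rightarrow X_t^\infty$ with $X^\infty$ started from $\delta_x\Pa=\Pa(x,\cdot)$, and testing against $g$ gives $R_t^{\gamma_n}g(x_n)=\E_{x_n}[g(X_t^{\gamma_n})]\to\E_{\Pa(x,\cdot)}[g(X_t^\infty)]=R_t^\infty g(x)$. (This uses only that the homogenization results, and Proposition~\ref{undim} in particular, apply along any sequence $\gamma_n\to+\infty$ with any convergent sequence of initial laws.) A standard compactness argument then upgrades continuous convergence to the two facts I need: $R_t^\infty g$ is bounded and continuous, and $\sup_{x\in K}|R_t^\gamma g(x)-R_t^\infty g(x)|\to 0$ for every compact $K\subset E$ — otherwise one would extract $\gamma_k\to+\infty$ and $x_k\in K$ with $|R_t^{\gamma_k}g(x_k)-R_t^\infty g(x_k)|\ge\eps$, pass to a sub-subsequence $x_k\to x\in K$, and get a contradiction from continuous convergence of $R_t^{\gamma_k}g$ together with continuity of $R_t^\infty g$. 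This is where the real work lies; the remaining estimates are bookkeeping.

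\textbf{Conclusion.} Using $\|R_t^\gamma g\|_\infty\le\|g\|_\infty$ and the tightness of the convergent sequence of initial laws, I would split
\[
\int_E f\,R_t^\gamma g\,\d\mu_0^\gamma-\int_E f\,R_t^\infty g\,\d\mu_0
=\int_E f\,(R_t^\gamma g-R_t^\infty g)\,\d\mu_0^\gamma+\Big(\int_E f\,R_t^\infty g\,\d\mu_0^\gamma-\int_E f\,R_t^\infty g\,\d\mu_0\Big).
\]
The second bracket tends to $0$ because $f\,R_t^\infty g\in C_b(E)$ and $\mu_0^\gamma\to\mu_0$; the first term is bounded, for any compact $K$, by $\|f\|_\infty\big(\sup_{K}|R_t^\gamma g-R_t^\infty g|+2\|g\|_\infty\,\mu_0^\gamma(K^c)\big)$, which is made arbitrarily small by choosing $K$ large (tightness) and then letting $\gamma\to+\infty$ (uniform convergence on $K$). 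Hence $\E_{\mu_0^\gamma}[f(X_0^\gamma)g(X_t^\gamma)]\to\int_E f\,R_t^\infty g\,\d\mu_0$, and the hypothesis $\supp\mu_0\subset E_\eff$ enters precisely here: on $E_\eff$ one has $\Pa(x,\cdot)=\delta_x$, so $R_t^\infty g(x)=\E_x[g(X_t^\infty)]$ for $\mu_0$-a.e.\ $x$, and since $X^\infty$ starts from $\mu_0\Pa=\mu_0$, conditioning on $X_0^\infty$ yields $\int_E f\,R_t^\infty g\,\d\mu_0=\E_{\mu_0}[f(X_0^\infty)g(X_t^\infty)]$, as required. (An alternative would be to approximate $f(X_0^\gamma)$ by the time-average $\tfrac1h\int_0^h\Pa f(X_s^\gamma)\,\d s$ and combine the quasi-martingale bound of Step~$3$ with the joint pseudo-path convergence of Step~$4$; but the martingale part of $\Pa f(X^\gamma)$ has quadratic variation of order $\gamma$, which makes a $\gamma$-uniform control of this approximation delicate, so I would favour the route above.)
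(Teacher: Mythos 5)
Your proof is correct, but it follows a genuinely different route from the paper's. The paper's proof of Lemma~\ref{deuxdim} is a short tilting trick: for strictly positive $f_0,f\in C_b(E)$ it writes $\E_{\mu_0^{\gamma}}\b{f_0(X_0^{\gamma})f(X_t^{\gamma})}=\mu_0^{\gamma}(f_0)\,\E_{\nu_{\gamma}}\b{f(X_t^{\gamma})}$ with the tilted initial laws $\nu_{\gamma}(\d z)=f_0(z)\mu_0^{\gamma}(\d z)/\mu_0^{\gamma}(f_0)$, observes that $\nu_{\gamma}\to\nu=f_0\mu_0/\mu_0(f_0)$, a probability carried by $E_{\eff}$ (so $\nu\Pa=\nu$), applies Proposition~\ref{undim} once to this single converging family, and reassembles the product by conditioning on $X_0^\infty$. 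You instead establish locally uniform convergence of the one-time semigroups $x\mapsto\E_x\b{g(X_t^{\gamma})}$ towards $x\mapsto\E_{\Pa(x,\cdot)}\b{g(X_t^{\infty})}$ via a continuous-convergence/compactness argument, feeding Proposition~\ref{undim} with Dirac initial data $\delta_{x_n}\to\delta_x$, and then integrate against $\mu_0^{\gamma}$ using tightness; your final use of $\supp\mu_0\subset E_{\eff}$ (so that $\Pa(x,\cdot)=\delta_x$ for $\mu_0$-a.e.\ $x$) plays exactly the role that $\nu\Pa=\nu$ plays in the paper. Both proofs invoke Proposition~\ref{undim} with initial laws other than the original family, hence both implicitly use Assumption~\ref{ass:wellposed} for initial conditions beyond the single law $\mu_0\Pa$ (the paper does the same again in the proof of Corollary~\ref{conv_simple}, so this is not a defect specific to your argument); your version needs well-posedness started from $\Pa(x,\cdot)$ for every $x\in E$, the paper's only for tilts of $\mu_0$. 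What the tilting trick buys is brevity — no uniformity in the starting point is ever needed, at the harmless price of restricting to strictly positive test functions; what your route buys is a stronger and reusable intermediate statement (continuity of the limiting kernel and locally uniform semigroup convergence) and the minor convenience of arbitrary $f,g\in C_b(E)$.
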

\begin{proof} We want to prove that for all $t>0$ and $f_0,f$ continuous bounded strictly positive functions on $E$, we have that:
    $$\mathbb{E}_{\mu_0^{\gamma}}(f_0(X_0^{\gamma})f(X_t^{\gamma}))\underset{\gamma\to+\infty}{\longrightarrow}\mathbb{E}_{\mu}(f_0(X_0^{\infty})f(X_t^{\infty}))$$
    We have that:
    \begin{align*}\mathbb{E}_{\mu_0^{\gamma}}(f_0(X_0^{\gamma})f(X_t^{\gamma}))&=\int_E f_0(z) \mathbb{E}_z(f(X_t^{\gamma}))\mu_0^{\gamma}(\mathrm{d}z) \\
    &=\mu_0^{\gamma}(f_0)\int_E  \mathbb{E}_z(f(X_t^{\gamma}))\nu_{\gamma}(\mathrm{d}z),\end{align*}
    where $\nu_{\gamma}(\mathrm{d}z)=\frac{1}{\mu_0^{\gamma}(f_0)} f_0(z)\mu_0^{\gamma}(\mathrm{d}z) $ is a probability measure on $E$. Since $\mu_0^{\gamma}(f_0)$ is uniformly minorated by a constant strictly superior to $0$ and converges to $\mu_{0}(f_0)$, we have that $\nu_{\gamma}$ converges in distribution towards $\nu(\mathrm{d}z)=\frac{1}{\mu(f_0)}f_0(z)\mu_{0}(\mathrm{d}z)$ that is a probability distribution on $E_{\eff}$ (so $\nu \Pa=\nu$). Using Proposition \ref{undim}, we have:
    \begin{align*}\mathbb{E}_{\mu_0^{\gamma}}(f_0(X_0^{\gamma})f(X_t^{\gamma}))=\mu_0^{\gamma}(f_0)\mathbb{E}_{\nu_{\gamma}}(f(X_t^{\gamma}))\underset{\gamma\to+\infty}{\longrightarrow} &\mu_{0}(f_0)\mathbb{E}_{\nu}(f(X_t^{\infty})) \\
    & \ \ \ \ \ \ \ \ =\mathbb{E}_{\mu_{0}}(f_0(X_0^{\infty})f(X_t^{\infty}))\end{align*}
\end{proof}
We now have all the tools needed to conclude.
\begin{proof}(Corollary \ref{conv_simple}) \\
We consider $0<t_1<\dots<t_n$ and $\eps\in]0,t_1[$, denoting $P_t^{\gamma}$ the semigroup associated with $X_t^{\gamma}$ and $P_t^{\infty}$ the semigroup associated with $X_t^{\infty}$, we notice that for all $f_1,\dots, f_n$ bounded strictly positives functions on $E$ we have:
$$\mathbb{E}_{\mu_0^{\gamma}}(f_1(X_{t_1}^{\gamma})\cdots f_n(X_{t_n}^{\gamma}))=P_{h_1}^{\gamma}(\rho_{\gamma},f_1(\cdot)P_{h_2}^{\gamma}(\cdot,f_2(\cdot) P_{h_3}^{\gamma}(\cdot,\dots P_{h_n}^{\gamma}(\cdot,f_n(\cdot))\dots))),$$
where $h_i=t_i-t_{i-1}-\eps$ with the convention $t_0=0$, and $\rho_{\gamma}$ is the law of $X_{\eps}^{\gamma}$ starting from $\mu_0^{\gamma}$. Proposition \ref{undim} gives us that $\rho_{\gamma}$ converges in distribution towards $\rho$, the law of $X_{\eps}^{\infty}$ starting from $\mu_0\Pa$ whose support is included in $E_{\eff}$.

To conclude we will prove by induction on $n$ that: $$\mathrm{d}z\mapsto P_{h_1}^{\gamma}(\rho_{\gamma},f_1(\cdot)P_{h_2}^{\gamma}(\cdot,f_2(\cdot) P_{h_3}^{\gamma}(\cdot,\dots f_{n-1}(\cdot)P_{h_n}^{\gamma}(\cdot, \mathrm{d}z)\dots)))$$ converges in distribution to: $$\mathrm{d}z\mapsto P_{h_1}^{\infty}(\rho,f_1(\cdot)P_{h_2}^{\infty}(\cdot,f_2(\cdot) P_{h_3}^{\infty}(\cdot,\dots f_{n-1}(\cdot)P_{h_n}^{\infty}(\cdot,\mathrm{d}z)\dots))),$$
both being positive finite non trivial measures since all the functions involved are positive bounded. Furthermore, the second measure is of support included in $E_{\eff}$.

Proposition \ref{undim} proves the convergence of the semigroup which is the result for $n=1$. We now take $n\geq 2$ and we assume that the result is proved at rank $n-1$. We write: $$\nu_{\gamma}(\mathrm{d}z)=P_{h_1}^{\gamma}(\rho_{\gamma},f_1(\cdot)P_{h_2}^{\gamma}(\cdot,f_2(\cdot) P_{h_3}^{\gamma}(\cdot,\dots f_{n-2}(\cdot)P_{h_{n-1}}^{\gamma}(\cdot,\mathrm{d}z)\dots)))$$
and:
$$\nu(\mathrm{d}z)=P_{h_1}^{\infty}(\rho,f_1(\cdot)P_{h_2}^{\infty}(\cdot,f_2(\cdot) P_{h_3}^{\infty}(\cdot,\dots f_{n-2}(\cdot) P_{h_{n-1}}^{\infty}(\cdot,\mathrm{d}z)\dots)))$$
By induction hypothesis, $\nu_{\gamma}$ converges to $\nu$ is distribution (so especially $\nu_{\gamma}(E)$ converges to $\nu(E))$. Writing $\overline{\nu_{\gamma}}=\frac{1}{\nu_{\gamma}(E)}\nu_{\gamma}$ and $\overline{\nu}=\frac{1}{\nu(E)}\nu$ we have that $\overline{\nu_{\gamma}}$ is a probability measure converging to $\overline{\nu}$. Hence, Lemma \ref{deuxdim} gives us that for all continuous bounded function $f$ we have:
\begin{align*} & P_{h_1}^{\gamma}(\rho_{\gamma},f_1(\cdot)P_{h_2}^{\gamma}(\cdot,f_2(\cdot) P_{h_3}^{\gamma}(\cdot,\dots f_{n-1}(\cdot)P_{h_{n}}^{\gamma}(\cdot,f)\dots))) \\ &=\nu_{\gamma}(E)\mathbb{E}_{\overline{\nu_{\gamma}}}(f_{n-1}(X_0^{\gamma})f(X_{h_n}^{\gamma})) \\ &\underset{\gamma\to+\infty}{\longrightarrow} \nu(E)\mathbb{E}_{\overline{\nu}}(f_{n-1}(X_0^{\infty})f(X_{h_n}^{\infty}))\\
&=P_{h_1}^{\infty}(\rho,f_1(\cdot)P_{h_2}^{\infty}(\cdot,f_2(\cdot) P_{h_3}^{\infty}(\cdot,\dots f_{n-1}(\cdot) P_{h_{n}}^{\infty}(\cdot,f)\dots))).
\end{align*}
Since this is true for all bounded functions $f$, this proves the result at rank $n$.

Hence, we take $f=f_n$ and we get that:
$$\mathbb{E}_{\mu_0^{\gamma}}(f_1(X_{t_1}^{\gamma})\cdots f_n(X_{t_n}^{\gamma}))\underset{\gamma\to+\infty}{\longrightarrow}\mathbb{E}_{\mu_0\Pa}(f_1(X_{t_1}^{\infty})\cdots f_n(X_{t_n}^{\infty}))$$
\end{proof}

\subsection{Proof of Proposition~\ref{cvunif}}

We propose in this section a proof of Proposition~\ref{cvunif} (rewritten as Lemma~\ref{the:uniform_theorem} below) which states that the large time convergence of the dominant process towards $E_{\eff}$ is uniform with respect to starting points taken on compacts. 

Let us reformulate Proposition~\ref{cvunif} by making assumptions explicit:
\begin{Lem}
\label{the:uniform_theorem} Let $E$ be Polish, $E_{\eff}$ a closed subset of $E$ and assume that $(x,t,\omega) \mapsto X_{t}(x)(\omega) \in E$ be a measurable map defined for $(t,x) \in \R_+ \times E$ and $\omega$ in a filtered probability space. Assume that $t \mapsto X_t(x)$ is for all $x\in E$ a c\`adl\`ag Markov process for the considered filtration. We assume moreover that:
\begin{itemize}
    \item For all $t \geq 0$, $x \mapsto X_t(x)$ is continuous in probability (Feller-type continuity).
    \item For all $x\in E$, the process $(X_t(x))_{t\ge 0}$ converges in probability towards $X_{\infty}(x)\in E_{\eff}$.
    \item $x\mapsto \Pa(x,\cdot)=\Law(X_\infty(x))$ is continuous for the topology of convergence in law.
    \item We have $\Pa(x,\cdot)=\delta_z$ for $z\in E_{\eff}$ if and only if $x=z\in E_{\eff}$
    \item We have the compact containment~\eqref{eq:cont_dom}: for $K$ compact and $\eps>0$ given, there exists a compact $K_{\eps}$ such that:
\begin{equation*}
\sup_{x\in K}\P \b{\exists t\ge0, \,  X_t(x) \in K_{\eps} ^c } \leq \eps. 
\end{equation*}
\end{itemize}
Then there exists a continuous bounded non-negative function $f$ on $E$ with $f > 0$ on $E \setminus E_\eff$ and $f=0$ on $E_{\eff} $ such that we have for all $K\subset E$ compact:
\begin{equation} \label{convergence}
\lim_{t \to + \infty} \sup_{x \in K}\E\b{f(X_t(x))}=0.
\end{equation}
\end{Lem}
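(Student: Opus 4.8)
The plan is to manufacture a single explicit ``distance to $E_\eff$'' function $f$ directly out of the hitting kernel $\Pa$, to observe that $t\mapsto f(X_t(x))$ is \emph{automatically} a non‑negative bounded supermartingale along the flow (so that $t\mapsto\E[f(X_t(x))]$ is non‑increasing), and then to upgrade the pointwise convergence $\E[f(X_t(x))]\to0$ to convergence uniform in $x\in K$ by Dini's theorem.

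First I would build $f$. Since $E_\eff$ is a closed subset of a Polish space it is itself Polish, so fix a countable family $(g_i)_{i\ge1}$ of continuous functions $g_i\colon E_\eff\to[0,1]$ separating the points of $E_\eff$, and set, for $x\in E$,
\[
f(x)\eqdef\sum_{i\ge1}2^{-i}\Bigl(\Pa(x,g_i^2)-\Pa(x,g_i)^2\Bigr),
\]
i.e.\ a weighted sum of the variances of the $g_i$ under $\Pa(x,\cdot)$, where $\Pa(x,\psi)=\int_{E_\eff}\psi\,\Pa(x,\cdot)$. Each term lies in $[0,1/4]$, so $f$ is bounded and non‑negative; it is continuous on $E$ because $x\mapsto\Pa(x,g_i)$ and $x\mapsto\Pa(x,g_i^2)$ are continuous (continuity of $\Pa$ for convergence in law) and the series converges uniformly. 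Finally $f(x)=0$ iff every $g_i$ is $\Pa(x,\cdot)$‑almost surely constant, which by the point‑separating property forces $\Pa(x,\cdot)$ to be a Dirac mass at a point of $E_\eff$; by the non‑degeneracy hypothesis (the hitting distribution is a Dirac mass precisely at points of $E_\eff$) this occurs exactly when $x\in E_\eff$. Hence $f>0$ on $E\setminus E_\eff$ and $f=0$ on $E_\eff$, as required.

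Next I would use the Markov property. Because $X_\infty(x)=\lim_{t}X_t(x)$, the Markov property gives $\E[\psi(X_\infty(x))\mid\F_t]=\Pa(X_t(x),\psi)$ for bounded measurable $\psi$ on $E_\eff$, i.e.\ $t\mapsto\Pa(X_t(x),\psi)$ is the closing Doob martingale of $\psi(X_\infty(x))$. Applying this to $g_i$ and $g_i^2$: $t\mapsto\Pa(X_t(x),g_i)$ is a bounded \cadlag martingale, hence $t\mapsto\Pa(X_t(x),g_i)^2$ is a submartingale, while $t\mapsto\Pa(X_t(x),g_i^2)$ is a martingale. Therefore $f(X_t(x))=\sum_i2^{-i}\bigl(\Pa(X_t(x),g_i^2)-\Pa(X_t(x),g_i)^2\bigr)$ is a non‑negative bounded supermartingale, so $t\mapsto\E[f(X_t(x))]$ is non‑increasing; and since $f(X_t(x))\to f(X_\infty(x))=0$ (convergence of $X_t(x)$ in probability, $X_\infty(x)\in E_\eff$, continuity of $f$) with $f$ bounded, $P_tf(x)\eqdef\E[f(X_t(x))]\to0$ for each fixed $x$. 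Moreover each $P_tf$ is continuous on $E$ (Feller‑type continuity of $x\mapsto X_t(x)$ and dominated convergence, $f$ bounded continuous). Thus on the compact $K$ we have a family $(P_tf)_{t\ge0}$ of continuous functions, non‑increasing in $t$, decreasing pointwise to the continuous function $0$; Dini's theorem—cover $K$ by finitely many neighbourhoods on each of which some $P_{T_x}f<\eps$, then invoke monotonicity in $t$—yields $\sup_{x\in K}P_tf(x)\to0$, which is \eqref{convergence}.

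The only genuinely delicate step is the construction of $f$: it must be simultaneously continuous and vanish \emph{exactly} on $E_\eff$, and it is precisely here that the non‑degeneracy (``randomness of the hitting point'') hypothesis is indispensable—drop it, as in the rotating‑circle example of Remark~\ref{rem:assSEP}, and $\Pa(x,\cdot)$ may be a Dirac mass for some $x\notin E_\eff$, so $f$ would vanish there and both the statement and this argument would break down. Once $f$ is in hand the rest is essentially free: recognising $\Pa(X_t(x),g_i)$ as the Doob martingale of $g_i(X_\infty(x))$ turns $f(X_t(x))$ into a supermartingale at no cost, and only the pointwise‑to‑uniform passage via Dini remains. (Note that the compact‑containment hypothesis is not actually needed for this particular statement, only for other steps of the main proof.)
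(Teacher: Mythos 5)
Your proof is correct, but it follows a genuinely different route from the paper's. The paper builds $f$ as the distance to $E_\eff$ for the ``harmonic'' semi-metric $d_{\mathrm{har}}(x,y)=\sum_n 2^{-n}|\Pa f_n(x)-\Pa f_n(y)|$ and then works hard for the uniformity in $x$: it needs a measurable $\eps$-projector onto $E_\eff$ (Lemma~\ref{proj}), an auxiliary continuity-in-probability result for $x\mapsto X_\infty(x)$ up to compact containment (Lemma~\ref{lem:approx_limit}), the compact containment hypothesis itself, and a finite covering of $K$ combined with the submartingale property of $t\mapsto|\Pa f_n(X_t(x))-\Pa f_n(X_t(x'))|$. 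You instead take $f(x)=\sum_i 2^{-i}\bigl(\Pa(x,g_i^2)-\Pa(x,g_i)^2\bigr)$, i.e.\ a weighted conditional-variance functional; the same two ingredients both proofs share --- that $t\mapsto\Pa(X_t(x),g)$ is the closing Doob martingale of $g(X_\infty(x))$, and Assumption~\ref{ass:non_dirac} to ensure $f$ vanishes exactly on $E_\eff$ --- then make $t\mapsto f(X_t(x))$ a bounded non-negative supermartingale for free, so $P_tf$ is monotone in $t$, continuous in $x$ by the Feller-type continuity, and pointwise $\to 0$; Dini's theorem gives the uniformity on compacts with no further apparatus. What your route buys is a substantially shorter argument that, as you note, does not use the compact containment hypothesis at all (so it proves a slightly stronger statement with fewer hypotheses), and it dispenses with the measurable-selection and uniform-continuity machinery; what the paper's route produces is a distance-type $f$ (a genuine $d_{\mathrm{har}}$-distance to $E_\eff$), which is conceptually closer to ``uniform convergence towards $E_\eff$'' but is not needed for how Proposition~\ref{cvunif} is used in Step~2. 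One small point of care: the identity $\E[\psi(X_\infty(x))\mid\F_t]=\Pa(X_t(x),\psi)$ should be stated for bounded \emph{continuous} $\psi$ (which is all you use for $g_i$ and $g_i^2$), justified by $L^1$-convergence of $\psi(X_s(x))$ as $s\to\infty$ together with the homogeneous Markov property; this is the same fact the paper invokes without further detail for $\Pa f_n(X_t(x))$, so it is not a gap relative to the paper's own level of rigour.
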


To prove the Lemma, we start with a definition of an object we will use in the proof. We say for $\varepsilon>0$ that $E_{\eff}$ admits a \emph{measurable $\varepsilon$-projector} in $E$ if there exists a measurable function $p_{\varepsilon}:E\to E_{\eff}$ such that for all $z\in E$ we have $d(z,E_{\eff})+\varepsilon\ge d(z,p(z))$. We have the following result on the existence of $\varepsilon$-projector.

\begin{Lem}[Existence of an $\eps$-projector]\label{proj}
    Let $(E,d)$ be a Polish space, $F$ a closed subset of $E$ and $\varepsilon>0$. Then $F$ admits a (measurable) $\varepsilon$-projector.
\end{Lem}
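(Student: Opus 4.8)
The plan is to reduce the construction of a measurable $\eps$-projector to a countable selection argument built on the separability of $E$ (hence of the closed, therefore Polish, subset $F$). First I would fix a countable dense subset $\set{f_n}_{n\ge 1}$ of $F$. Since $d(z,F)=\inf_{n\ge 1}d(z,f_n)$ for every $z\in E$, for each $z$ there is at least one index $n$ with $d(z,f_n)<d(z,F)+\eps$. I then define
$$
N(z)\eqdef \min\set{n\ge 1:\ d(z,f_n)<d(z,F)+\eps}, \qquad p_\eps(z)\eqdef f_{N(z)},
$$
which by construction is a map $E\to F$ with $d(z,p_\eps(z))<d(z,F)+\eps$, so in particular the required inequality $d(z,F)+\eps\ge d(z,p_\eps(z))$ holds for all $z\in E$.

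The remaining point is the Borel measurability of $p_\eps$. The functions $z\mapsto d(z,F)$ and $z\mapsto d(z,f_n)$ are $1$-Lipschitz, hence continuous, so each set $A_n\eqdef\set{z\in E:\ d(z,f_n)<d(z,F)+\eps}$ is open. Consequently the level set $\set{N=n}=A_n\setminus\bigcup_{m<n}A_m$ is Borel; these sets form a countable Borel partition of $E$, and $p_\eps$ is constant (equal to $f_n$) on $\set{N=n}$. Therefore $p_\eps$ is Borel measurable, which completes the argument.

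There is essentially no hard step here; the only point worth emphasising is why one passes through the $\eps$-relaxation rather than using a genuine metric projection: in an arbitrary Polish space a closed set need not be proximinal, and even when nearest points do exist the projection multifunction may fail to admit a measurable selection. Using a fixed enumeration of a countable dense subset of $F$ sidesteps both difficulties at once. Alternatively, one could invoke the Kuratowski--Ryll-Nardzewski measurable selection theorem applied to the nonempty, closed-valued, measurable multifunction $z\mapsto\set{f\in F:\ d(z,f)\le d(z,F)+\eps}$, but the explicit construction above is more elementary and self-contained, and is what I would write out.
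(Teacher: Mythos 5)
Your proof is correct, but it takes a genuinely different route from the paper. The paper proves the lemma by applying the Kuratowski--Ryll-Nardzewski measurable selection theorem: it checks that the multifunction $y \mapsto \overline{B}(y, d(y,F)+\varepsilon)$ is measurable (by computing the preimage condition $\{y:\ d(y,x) < \alpha + d(y,F) + \varepsilon\}$ for open balls $U=B(x,\alpha)$) and then extracts a measurable selection. You instead give an explicit ``first index'' selection: enumerate a countable dense subset $\{f_n\}$ of the (separable, since closed in Polish) set $F$, use $d(z,F)=\inf_n d(z,f_n)$ to guarantee that $N(z)=\min\{n:\ d(z,f_n)<d(z,F)+\varepsilon\}$ is well defined, and observe that $p_\varepsilon$ is constant on the Borel partition $\{N=n\}=A_n\setminus\bigcup_{m<n}A_m$ with $A_n$ open, hence Borel. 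Both arguments are valid; yours is more elementary and self-contained, avoids quoting a selection theorem, and has the additional small advantage that your map lands in $F$ by construction (the paper's multifunction $\overline{B}(y,d(y,F)+\varepsilon)$, as written, takes values in $E$ rather than in $F$, so strictly speaking one should intersect it with $F$ before selecting), while the paper's KRN argument is shorter to state once the selection theorem is taken for granted and generalizes directly to other set-valued constraints. You correctly identify that a genuine nearest-point projection need not exist or be measurably selectable, which is exactly why both proofs pass through the $\varepsilon$-relaxation.
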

\begin{proof}
    Let $$g:y\mapsto d(y,F)=\inf_{z\in E_{\eff}} d(y,z).$$
    The function $g$ is continuous and therefore measurable. We now consider the multifunction $\psi$ on $E$ defined by:$$\psi:y\mapsto \overline{B}(y,g(y)+\varepsilon).$$
    Then for $U$ an open ball of center $x$ and of radius $\alpha$, we have that \begin{align*}\{y\in E, \, \psi(y)\cap U\neq \emptyset\}&=\{y\in E, \, d(y,x)< \alpha+g(y)+\varepsilon\} \\ &=\{y\in E, \, d(y,x)-g(y)<\alpha+\varepsilon\}\end{align*}

    Since $y\mapsto d(y,x)-g(y)$ is measurable, we have that this set is in $\mathcal{B}(E)$. Using that all open set of $E$ are countable union of balls, we have that the result stays true for all $U$ open set of $E$. Eventually, Kuratowski–Ryll-Nardzewski measurable selection theorem gives us that $\psi$ admits a measurable selection, that is by definition an $\varepsilon$-projector
\end{proof}

To prove Lemma~\ref{the:uniform_theorem} we will need to use the following general lemma, which proves, using the continuity with respect to $x$ of $X_t(x)$ in probability and of $X_\infty(x)$ in law, the continuity of $X_\infty(x)$ in probability, up to some compact containment.
\begin{Lem} \label{lem:approx_limit} Assume $E$ is Polish. Let $(x,t,\omega) \mapsto X_{t}(x)(\omega) \in E$ be a measurable map defined for $(t,x) \in \R_+ \times E$ and $\omega$ in a filtered probability space. Assume that $t \mapsto X_t(x)$ is for all $x\in E$ a c\`adl\`ag Markov process for the considered filtration. We assume moreover that:
\begin{itemize}
    \item For all $t \geq 0$, $x \mapsto X_t(x)$ is continuous in probability. 
    \item For all $x\in E$, the process $(X_t(x))_{t\ge 0}$ converges in probability towards $X_{\infty}(x)\in E_{\eff}$  whose distribution is given by $\Pa(x, \, . \,)$.
    \item $x\mapsto \Pa(x,\cdot)=\Law(X_\infty(x))$ is continuous for the metric topology of convergence in law.
\end{itemize}

Then for all $K^+ \subset E$ and $K \subset K^{+}$ compact sets, all $x\in K$ and all $\eps,h>0$, there exists $\eta=\eta_{K,K^+,x,\eps,h}>0$ such that for all $x'\in B(x,\eta) \cap K$ ,  we have:
$$\mathbb{P}\left(\d(X_{\infty}(x),X_{\infty}(x')) > h \right)\leq \frac{\eps}{16}+ 2 \mathbb{P}\p{\exists t \geq 0, \, X_t(x') \notin K^+ }.$$

\end{Lem}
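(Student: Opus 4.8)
The plan is to compare $X_{\infty}(x)$ and $X_{\infty}(x')$ by inserting, for one large but \emph{fixed} time $t_{0}$, the intermediate value $X_{t_{0}}(x')$, and then to run the Markov property at time $t_{0}$ together with continuity of $\Pa$. The point is that, conditionally on $\mathcal F_{t_{0}}$, the law of $X_{\infty}(x')=\lim_{s}X_{t_{0}+s}(x')$ is exactly $\Pa(X_{t_{0}}(x'),\cdot)$ (the semigroup applied at time $+\infty$), so once we know that $X_{t_{0}}(x')$ is with high probability close to $E_{\eff}$, continuity of $\Pa$ together with the collapse $z\mapsto\delta_{z}$ on $E_{\eff}$ forces $X_{\infty}(x')$ to remain close to the nearby point of $E_{\eff}$.

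First I would fix the order of quantifiers: $t_{0}$ will depend only on $x$, and $\eta$ only on $t_{0}$. Since $\Pa(x,\cdot)$ is a Borel probability on the Polish space $E_{\eff}$, it is tight; pick a compact $C\subset E_{\eff}$ with $\Pa(x,C)\ge 1-\beta$, for a small $\beta$ to be fixed at the very end. Using continuity of $\Pa$ at each $z\in C$, extract from the cover $\{B(z,\alpha_{z}/2)\}_{z\in C}$ of $C$ a finite subcover $\{B(z_{j},\alpha_{j}/2)\}_{j\le N}$, where each $\alpha_{j}>0$ is chosen small enough (at most $h/3$) that $d(y,z_{j})<\alpha_{j}$ implies $\Pa(y,\cdot)$ is within Prokhorov distance $\beta$ of $\delta_{z_{j}}$, hence puts mass at least $1-\beta$ on $B(z_{j},\beta)$. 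With $\alpha:=\tfrac13\min_{j}\alpha_{j}>0$, any $y$ with $d(y,C)<\alpha$ admits a measurably chosen index $j(y)$ with $d(y,z_{j(y)})<\alpha_{j(y)}$.

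Next, using that $X_{t}(x)\to X_{\infty}(x)$ in probability ($x$ is fixed here, so no uniformity is needed), choose $t_{0}$ with $\P(d(X_{t_{0}}(x),X_{\infty}(x))>\min(h/6,\alpha/2))\le\beta$; then, using that $x'\mapsto X_{t_{0}}(x')$ is continuous in probability at $x$ ($t_{0}$ now fixed), choose $\eta$ so that $\P(d(X_{t_{0}}(x'),X_{t_{0}}(x))>\min(h/6,\alpha/2))\le\beta$ for every $x'\in B(x,\eta)\cap K$. Combined with $\Pa(x,C)\ge 1-\beta$, this shows that off an event of probability $O(\beta)$ the $\mathcal F_{t_{0}}$-measurable point $y:=X_{t_{0}}(x')$ satisfies simultaneously $d(y,C)<\alpha$ and $d(X_{\infty}(x),y)\le h/3$. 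On that good event,
$$d(X_{\infty}(x),X_{\infty}(x'))\ \le\ d(X_{\infty}(x),y)+d(y,z_{j(y)})+d(z_{j(y)},X_{\infty}(x')),$$
with the first term $\le h/3$ and the second $<\alpha_{j(y)}\le h/3$; and conditioning on $\mathcal F_{t_{0}}$ (w.r.t. which $y$ and $j(y)$ are measurable while $z_{j(y)}$ is one of finitely many deterministic points), the Markov property gives $\P(d(z_{j(y)},X_{\infty}(x'))>h/3\mid\mathcal F_{t_{0}})=\Pa(y,B(z_{j(y)},h/3)^{c})\le\Pa(y,B(z_{j(y)},\beta)^{c})\le\beta$ as soon as $\beta\le h/3$. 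Integrating and collecting all discarded events bounds $\P(d(X_{\infty}(x),X_{\infty}(x'))>h)$ by $O(\beta)$, which is made $\le\eps/16$ by taking $\beta$ a small multiple of $\eps$ (the explicit constant $16$ just reflects splitting $\eps$ into a handful of pieces).

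The main obstacle, and the reason for this indirect route, is that $X_{t}(x')\to X_{\infty}(x')$ is \emph{not} assumed uniform in $x'$: no single $t_{0}$ is simultaneously good for all $x'$ near $x$. The fix is to pick $t_{0}$ good for $x$ itself, transport it to nearby $x'$ through the two continuity hypotheses, and let the Markov restart at $t_{0}$ plus continuity of $\Pa$ convert ``$X_{t_{0}}(x')$ near $E_{\eff}$'' into ``$X_{\infty}(x')$ near $E_{\eff}$''. An alternative organization, which reproduces the stated inequality verbatim, replaces the tightness/finite‑cover step by confining the argument to the compact $K^{+}$: one uses the measurable $\eps$-projector of Lemma~\ref{proj} to build an $E_{\eff}$-valued $\mathcal F_{t_{0}}$-measurable anchor $p_{\delta}(X_{t_{0}}(x'))$, invokes the Heine–Cantor uniform continuity of $\Pa$ on $K^{+}$, and pays the term $2\,\P(\exists t\ge 0,\ X_{t}(x')\notin K^{+})$ for the events on which the trajectory — hence also $X_{\infty}(x')=\lim_{t}X_{t}(x')$ — leaves $K^{+}$.
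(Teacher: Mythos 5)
Your argument is correct, but it is organized genuinely differently from the paper's. The paper anchors both limit points to the single random point $p_{a/4}(X_T(x))$ produced by the measurable $\eps$-projector of Lemma~\ref{proj}, controls the conditional hitting law through the Heine--Cantor uniform continuity of $\Pa$ on the compact $K^+$ (estimate~\eqref{approx}), and therefore must restrict to the event that the trajectory of $x'$ stays in $K^+$ --- this is exactly where the term $2\,\P\p{\exists t\ge 0,\ X_t(x')\notin K^+}$ comes from. You instead use tightness of the single measure $\Pa(x,\cdot)$ to produce a compact $C\subset E_{\eff}$ and a finite subcover with anchor points $z_j$ at which only pointwise continuity of $\Pa$ is needed, then restart at a fixed $t_0$ via the Markov property; this needs no containment event at all and in fact proves the stronger bound $\P\p{d(X_\infty(x),X_\infty(x'))>h}\le \eps/16$, which implies the stated inequality (your closing remark correctly identifies the paper's organization as the variant reproducing it verbatim). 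Your route also uses only the Markov property of the single process $X(x')$ for the common filtration, whereas the paper invokes time-homogeneity and the Markov property of the pair $(X_t(x),X_t(x'))$. Two small points to tighten: (i) like the paper, you implicitly use $\Pa(z,\cdot)=\delta_z$ for $z\in E_{\eff}$, which is not among the lemma's bullets but comes from the minimality part of Assumption~\ref{ass:main}; (ii) when integrating the conditional bound, condition on the $\calF_{t_0}$-measurable event $\set{d(X_{t_0}(x'),C)\le\alpha}$, which contains your good event, rather than on the good event itself, since the latter involves $X_\infty(x)$ and is not $\calF_{t_0}$-measurable; the identification $\E\b{f(X_\infty(x'))\mid \calF_{t_0}}=\Pa f(X_{t_0}(x'))$ then follows by taking an almost surely convergent subsequence $X_{t_0+s_n}(x')\to X_\infty(x')$ and using the pointwise convergence of the semigroup to $\Pa$, which is at the same level of rigor as the paper's own ``directly implies'' step.
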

\begin{proof} The proof of Lemma~\ref{lem:approx_limit} is decomposed in several steps.

\paragraph{Two uniform continuity estimates.} First, by assumption, the function $x\mapsto \Pa(x,\cdot)= \Law(X_\infty(x))$ is continuous for the metric topology of convergence in law. Since $K^{+}$ is compact, Heine's theorem applies and gives us that it is uniformly continuous on $K^{+}$. Remark also that for $z\in E_{\eff}$ one has $\Pa(z,\cdot)=\delta_z$. As a direct consequence, for all $\alpha,h>0$, there exists $a_{\alpha,h}>0$ such that for all $z\in E_{\eff}\cap K^{+}$ and $x\in K^{+}$ : \begin{equation} \label{approx}
    d(x,z)\leq a_{\alpha,h}  \ \ \ \ \ \text{implies} \ \ \ \ \ \ \mathbb{P}(d(X_{\infty}(x),z) \leq \frac{h}{2}))\geq 1-\alpha
\end{equation} 

 Second, let $t \geq 0$ be any given time, and let us consider the continuity with respect to the initial conditions. By assumption, the map $x \mapsto X_t(x)$ is continuous for the metric topology of convergence in probability of random variables. Since $K$ is compact, Heine's theorem applies again and yields uniform continuity, which can be expressed as follows. For all $a,\alpha > 0$, there exists $\eta_{t,a,\alpha}>0$ such that:
 \begin{equation} \label{approx2}
    d(x,x')\leq \eta_{t,a,\alpha} \ \ \ \ \ \text{implies} \ \ \ \ \ \ \mathbb{P}\left(d(X_t(x),X_t(x'))\le\frac{a}{2}\right)\geq 1 - \alpha
\end{equation}

\paragraph{A first estimate.} Let $\alpha,a>0$ and $x \in K$ be given. Considering $p_{a/4}$ an $a/4$-projector whose existence in given by Lemma \ref{proj}, we claim that there exists $ T=T_{x,a,\alpha}$ and $\eta=\eta_{x,a,\alpha} > 0$ such that for any $x'\in B(x,\eta)\cap K$, we have: \begin{align*}\mathbb{P}(d(X_{T}(x'), p_{a/4}(X_{T}(x))) \leq a) \geq 1-2\alpha. \numberthis \label{et1}\end{align*}

Indeed, we first have by assumption that $(X_t(x))_{t \geq 0}$ converges in probability to $X_{\infty}(x)$, therefore there exists $T = T_{x,a,\alpha}>0$ such that  \begin{equation} \label{eq:T}
    \P(d(X_{T}(x),E_\eff) \leq a/4 ) \geq \mathbb{P}(d(X_{T}(x),X_\infty(x)) \leq a/4 ) \geq  1-\alpha .
\end{equation}

Next, we can consider the lower bound
\begin{align*}&\mathbb{P}\p{ d(X_T(x'),p_{a/4}(X_{T}(x) ) ) \leq a } \\&\geq \mathbb{P}\left( d(X_T(x'),X_T(x)) \leq \frac{a}{2}, \, \, X_T(x)\in B(p_{a/4}(X_T(x)),\frac{a}{2})\right) \\
&\geq \mathbb{P}\left( d(X_T(x'),X_T(x)) \leq \frac{a}{2}, \, \, X_T(x)\in B(E_{\eff},\frac{a}{4})\right),  \end{align*}
and using first~\eqref{eq:T} and then~\eqref{approx2} with $\eta=\eta_{T,a,\alpha}$ we obtain the claim~\eqref{et1}.

\paragraph{Main estimate.} The key of the proof consists in the following claim. Let $K \subset K^+ \subset E$ compact subsets, $\alpha \in [0,1]$, $h>0$, and $x \in K$ be given. Consider the event $\calB_{x'} = \set{X_t(x') \in K^+, \, \forall t \geq 0}$. We claim that there exists  $\eta > 0$ and $T \geq 0$ such that for any $x'\in B(x,\eta) \subset K $, we have:
\begin{align*}
    \mathbb{P}(X_{\infty}(x')\in B( p_{a/4}(X_{T}(x)),\frac{h}{2})) \geq 1- 5\alpha-\mathbb{P}(\calB_{x'}^c).
\end{align*}
This estimates will subsequently quite easily yield the proof of the whole lemma.

The key to prove the claim consists in the following conditioning:
\begin{align*}
    &\mathbb{P}(X_{\infty}(x')\in B(p_{a/4}(X_{T}(x)),\frac{h}{2}))\\ &\geq \underbrace{ \mathbb{P}(X_{\infty}(x')\in B(p_{a/4}(X_{T}(x)),\frac{h}{2})\left| X_T(x')\in B(p_{a/4}(X_{T}(x)),a)\cap K^{+} \right.)}_{\text{ii)}} \\
    & \ \ \ \ \ \ \ \ \ \ \times \underbrace{\mathbb{P}(X_T(x')\in B(p_{a/4}(X_{T}(x)),a)\cap K^{+})}_{\text{i)}} \\
\end{align*}
Using our first estimate~\eqref{et1}, there exist a $T >0$ and a $\eta >0$ such that for all $x' \in B(x,\eta) \cap K $ the term i) can be lower bounded as follows:
\begin{align*}
    & \text{i)}=\mathbb{P}(X_T(x')\in B(p_{a/4}(X_{T}(x)),a)\cap K^{+}) \\
    &\geq 1-\mathbb{P}(X_T(x')\notin B(p_{a/4}(X_{T}(x)),a))-\mathbb{P}(\calB_{x'}^c)\\
    &\geq 1-2\alpha-\mathbb{P}(\calB_{x'}^c)).
\end{align*}
Then, since $t \mapsto (X_t(x),X_t(x'))$ is homogeneous in time and satisfies the Markov property, the uniform estimate~\eqref{approx} (recall that $x' \in K \subset K^+$) directly implies that ii) can be lower bounded by 
\begin{align*}\text{ii)} &= \mathbb{P}\left(X_{\infty}(x')\in B(p_{a/4}(X_{t}(x)) ,\frac{h}{2})\left| X_t(x')\in B(p_{a/4}(X_{t}(x)),a)\cap K^{+}\right.\right) \\ &\geq 1-\alpha  \label{et2} \numberthis \end{align*}
for any deterministic $t \geq 0$  and in particular for $t=T$.

Finally we obtain that $\text{i)} \times \text{ii)} \geq (1-\alpha)\times (1-2\alpha-\mathbb{P}(\calB_{x'}^c)) \geq 1-5\alpha-\mathbb{P}(\calB_{x'}^c)$.


\paragraph{Final remark.} Eventually, one can apply two times the previous claim and get for all $x'\in B(x,\eta)$
\begin{align*}
    &\mathbb{P}(d(X_{\infty}(x'),X_{\infty}(x)) \leq h)\\ &\geq \mathbb{P}(X_{\infty}(x')\in B(p_{a/4}(X_{T}(x)), \frac{h}{2}) \ \& \ X_{\infty}(x)\in B(p_{a/4}(X_{T}(x)),\frac{h}{2})) \\
    & \geq 1 - \mathbb{P}(X_{\infty}(x')\notin B(p_{a/4}(X_{T}(x)),\frac{h}{2})) - \mathbb{P}(X_{\infty}(x)\notin B(p_{a/4}(X_{T}(x)),\frac{h}{2})) \\
    &\geq 1-10\alpha-2\mathbb{P}(\calB_{x'}^c).
\end{align*}
We then choose $\alpha$ small enough so that $10\alpha\le\frac{\varepsilon}{16}$, and we have proved the lemma.
\end{proof}

Before carrying out the proof of the theorem, we end with a usual technical lemma that will be useful in the proof. We recall as a preamble that the distance function to any closed set $C$ is $1$-Lipschitz since:
\begin{align*}
   \underset{y\in C}{\inf} d(x',y)-\underset{y\in C}{\inf} d(x,y)\leq \underset{y\in C}{\sup}\p{ d(x',y)-d(x,y) }\leq \underset{y\in E_{\eff}}{\sup} d(x,x')  =  d(x,x') .
\end{align*}

\begin{Lem}\label{lem:convdet} Let $(E,\rm{d})$ be a separable metric space. There exists a countable family of $1$-Lipschitz, bounded by $1$ functions that is convergence determining.
\end{Lem}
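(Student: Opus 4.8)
The plan is to exhibit an explicit countable family of truncated distance-to-a-finite-set functions, and then to verify directly, through the portmanteau theorem, that convergence of the integrals of those functions against probability measures forces weak convergence. First I would reduce to the case $d\le 1$: replacing $d$ by $d\wedge 1$ leaves the topology unchanged (hence also the Borel $\sigma$-algebra and the notion of weak convergence), and a function that is $1$-Lipschitz for $d\wedge 1$ is \emph{a fortiori} $1$-Lipschitz for $d$, so nothing is lost. Fixing a countable dense set $D=\set{x_n}_{n\ge 0}\subset E$, I would take
\[
\calF \eqdef \set{\, (q-d(\cdot,S))^+ \;:\; S\subseteq D \text{ finite, nonempty},\ q\in\Q\cap(0,1] \,},
\]
where $d(x,S)=\min_{y\in S}d(x,y)$ and $(\cdot)^+$ denotes positive part. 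This family is countable; each member equals $\max_{y\in S}(q-d(\cdot,y))^+$, is $1$-Lipschitz (a distance to a closed set is $1$-Lipschitz as recalled just above, $t\mapsto(q-t)^+$ is $1$-Lipschitz, and a maximum of $1$-Lipschitz functions is $1$-Lipschitz), and takes values in $[0,q]\subseteq[0,1]$.

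The substance of the proof is to check that $\calF$ is convergence determining. Let $\mu_m,\mu$ be probability measures on $E$ with $\int f \d\mu_m\to\int f \d\mu$ for every $f\in\calF$; by the portmanteau theorem it suffices to prove $\liminf_m\mu_m(U)\ge\mu(U)$ for every open $U\subsetneq E$. Fix such a $U$ and $\eps>0$. Finite Borel measures on a metric space are inner regular by closed sets, so I pick a closed $F\subseteq U$ with $\mu(F)>\mu(U)-\eps$; since the closed sets $F_\rho\eqdef\set{x\in F:\ d(x,E\setminus U)\ge\rho}$ increase to $F$ as $\rho\downarrow 0$, I then fix a rational $\rho\in(0,1]$ with $\mu(F_\rho)>\mu(U)-\eps$.

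Next I fix a large integer $R$ and a rational $\delta>0$ with $(R+1)\delta\le\rho$ (so in particular $R\delta\le 1$). Letting $D'=\set{x_n\in D:\ B(x_n,\delta)\cap F_\rho\neq\emptyset}$, density gives $F_\rho\subseteq\bigcup_{x_n\in D'}B(x_n,\delta)$, and for each $x_n\in D'$, choosing $y\in F_\rho$ with $d(x_n,y)<\delta$, one gets $B(x_n,R\delta)\subseteq B(y,(R+1)\delta)\subseteq B(y,\rho)\subseteq U$. Hence $\mu\!\left(\bigcup_{x_n\in D'}B(x_n,\delta)\right)\ge\mu(F_\rho)>\mu(U)-\eps$, and continuity from below yields a finite $S\subseteq D'$ with $\mu\!\left(\bigcup_{x_n\in S}B(x_n,\delta)\right)>\mu(U)-\eps$, while $\bigcup_{x_n\in S}B(x_n,R\delta)\subseteq U$. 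The function $g\eqdef(R\delta-d(\cdot,S))^+$ then belongs to $\calF$ (here $R\delta$ is rational in $(0,1]$); it satisfies $0\le g\le R\delta$ and $g=0$ outside $\bigcup_{x_n\in S}B(x_n,R\delta)\subseteq U$, whence $\int g \d\mu_m\le R\delta\,\mu_m(U)$; and $g\ge(R-1)\delta$ on $\bigcup_{x_n\in S}B(x_n,\delta)$, whence $\int g \d\mu\ge(R-1)\delta\,\mu\!\left(\bigcup_{x_n\in S}B(x_n,\delta)\right)$. Combining,
\[
\liminf_m\mu_m(U)\ \ge\ \frac{1}{R\delta}\,\liminf_m\int g \d\mu_m\ =\ \frac{1}{R\delta}\int g \d\mu\ \ge\ \frac{R-1}{R}\,\mu\!\left(\bigcup_{x_n\in S}B(x_n,\delta)\right)\ >\ \frac{R-1}{R}\bigl(\mu(U)-\eps\bigr),
\]
and letting $R\to+\infty$ and then $\eps\to 0$ gives $\liminf_m\mu_m(U)\ge\mu(U)$, which closes the argument.

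The one point I would treat carefully, and the only real obstacle, is the constant $\tfrac{R-1}{R}$ in the last display. A careless use of a single tent of radius $\delta$ yields only $\liminf_m\mu_m(U)\ge\tfrac12\mu(U)$ (or, if one compares with a tent whose support leaves $U$, merely an estimate on $\limsup_m\mu_m(U)$), which is useless. The fix is built into $\calF$: one uses a tent whose \emph{support} has the radius $R\delta$ still inside $U$ — so that $g\le R\delta\,\one_U$ gives a genuine \emph{lower} bound for $\mu_m(U)$ — while the region on which $g$ is within a factor $(R-1)/R$ of its maximum has the much smaller radius $\delta$; sending $R\to\infty$ then washes out the constant, which is precisely why truncated distance functions at arbitrarily fine rational scales are needed. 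Everything else — the portmanteau theorem, inner regularity of finite Borel measures on metric spaces, and continuity from below — is classical and uses no completeness or local compactness of $E$ (for the statement about sub-probability measures one would merely adjoin the constant function $1$ to $\calF$ and also invoke $\mu_m(E)\to\mu(E)$).
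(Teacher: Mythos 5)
Your proof is correct, and it reaches the conclusion by a route that differs from the paper's in its details, though both rest on the same general idea of a countable family of $[0,1]$-valued Lipschitz truncations of distance functions checked against the portmanteau theorem. The paper works on the closed-set side: using second countability it fixes a countable collection $\calB$ of closed sets such that every closed set is a decreasing intersection of members of $\calB$, takes the functions $f_{n,C}(x)=1-\min(n\,d(x,C),1)$ (rescaled by $1/n$ to be $1$-Lipschitz and bounded by $1$), and gets $\limsup_q\mu_q(C)\le\mu(C)$ for every closed $C$ by a short monotone-convergence argument, since $\one_C=\inf_{n,p}f_{n,C_p}$. You instead work on the open-set side with ``tent'' functions $(q-d(\cdot,S))^+$ indexed by finite subsets $S$ of a countable dense set and rational heights $q$, and prove $\liminf_m\mu_m(U)\ge\mu(U)$ via inner regularity by closed sets, a covering of $F_\rho$ by small balls with centers in the dense set, and the two-radius trick (support of radius $R\delta$ inside $U$, plateau within factor $(R-1)/R$ on radius $\delta$) followed by $R\to\infty$ to eliminate the constant. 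The trade-off is clear: the paper's argument is shorter once the existence of $\calB$ is granted (a point it treats tersely), while yours avoids that construction entirely at the price of invoking inner regularity and running the covering/scaling argument; your reduction to $d\wedge1$ and the remark that your functions, being $[0,1]$-valued, are automatically $1$-Lipschitz for $\min(1,d)$ also matches how the lemma is actually used later in the paper. Both arguments use no completeness or local compactness, and your handling of the degenerate case (e.g. $\mu(U)<\eps$, where no nonempty $S$ is needed) is trivially absorbed by the final limits, so there is no gap.
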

\begin{proof}
Since $E$ is metric separable, it is second countable and there exists a countable family $\calB$ of closed sets that contains a decreasing sequence whose intersection is any given closed set. Let us consider the following countable family of bounded and Lipschitz functions

$$ f_{n,C}(x) = 1 - \min\p{n d(x,C),1},  \qquad C \in \calB, \, n \in \N\setminus\{0\} .$$ 

Let $C$ be a given closed set and $C_p \searrow C$ a decreasing sequence converging to $C \in \calB$. One has then $\inf_{p,n}  f_{n,C_p}(x) = \one_{C}(x)$. Let $(\mu_q)_{q}$ denotes a sequence of probabilities, and $\mu$ another probability such that $\limsup_{q} \mu_q(f_{n,C_p}) = \mu(f_{n,C_p})$ for any $n,p$. Obviously, by monotone convergence
$$
 \limsup_{q} \mu_q(C) = \limsup_{q}  \inf_{p,n} \mu_q(f_{n,C_p}) \leq     \inf_{p,n}  \limsup_{q} \mu_q(f_{n,C_p}) = \mu(C).
$$
This implies by portmanteau lemma that $\lim_q \mu_q = \mu$ in distribution.

Taking $g_{n,C_p}=f_{n,C_p}/n$ that is $1$-Lipschitz and bounded by $1$, we have that $\limsup_{q} \mu_q(f_{n,C_p}) = \mu(f_{n,C_p})$ if and only if $\limsup_{q} \mu_q(g_{n,C_p}) = \mu(g_{n,C_p})$, so the family $(g_{n,C_p})_{n\ge 1,C\in \mathcal{B}}$ verifies the claim.

\end{proof}

\begin{proof}[Proof of Lemma \ref{the:uniform_theorem}]
We consider a compact $K$ of $E$ and $\eps$. By compact containment (Assumption~\ref{ass:tight}), there exists a compact $K_{\eps}$ such that denoting $$
\calB_x=\{X_t(x)\in K_{\eps}, \, \forall t\ge 0\}
$$ 
we have $\sup_{x\in K} \mathbb{P}(\calB_x^c)\le \frac{\eps}{8} $. 

We take $(f_n)_{n\ge1}$ a family of convergence determining functions on $E$ that are bounded by $1$ and $1$-Lipchitz for the distance $\min(1,d(x,y))$ (Lemma~\ref{lem:convdet}). For $x,y\in E$ the following (harmonic) semi-distance:
$$d_{\text{har}}(x,y)=\sum_{n\geq 1} |\Pa f_n(x)-\Pa f_n(y)|2^{-n}.$$
Since $(f_n)_n$ is separating; we note that $d_{\text{har}}(x,y)=0$ if and only if  $\Pa(x,\cdot) = \Pa(y,\cdot))$. Moreover the continuity assumption on $\Pa$ implies that $d_{\text{har}}$ is topologically weaker than $d$.

To prove Lemma \ref{the:uniform_theorem}, we consider the harmonic distance function to $E_\eff$ defined by
$$f:x\in E\mapsto \underset{y\in E_{\eff}}{\inf} d_{\text{har}}(x,y) \in \R_{+}.$$
We claim that $f$ is null on $E_{\eff}$, strictly positive on $E\setminus E_{\eff})$ and $1$-Lipschitz for $d_{\text{har}}$ (hence continuous for $d$). We will subsequently prove~\eqref{convergence} for this function. First, we have trivially that $f_{|E_{\eff}}=0$, and that $f$ is $1$-Lipschitz as a distance function. Let us now prove that we have $f_{|E\setminus E_{\eff}}>0$. Let $x \in E\setminus E_{\eff}$ be given. We assume that $f(x)=0$ and we prove that $x\in E_{\eff}$. If $f(x)=0$, we can find by definition of $f$ a sequence $y_n\in E_{\eff}$ such that $d_{\rm{har}}(y_n,x)\underset{n\to+\infty}{\longrightarrow} 0$, so $\langle \delta_{y_n},f_p\rangle=f_p(y_n)\underset{n\to+\infty}{\longrightarrow}\Pa f_p(x)=\langle \Pa(x,\cdot),f_p\rangle$ for all $p\ge1$. Since the $f_p$ are convergence determining, we thus have that $\delta_{y_n}$ converges weakly towards $\Pa(x,\cdot)$, but a converging sequence of Dirac distributions of support included in a closed set can only converge towards another Dirac measure of support included in the same closed set, so eventually there exists $y\in E_{\eff}$ such that $\Pa(x,\cdot)=\delta_y=\Pa(y,\cdot)$. By Assumption~\ref{ass:non_dirac}, we have therefore $x=y\in E_{\eff}$.

We can develop the main argument, using the first key fact that for all $n\geq 1$ and $x\in E$, the process $t\mapsto \Pa f_n(X_t(x))$ is a martingale for the underlying filtration, so that the absolute value:
$$
t \mapsto |\Pa f_n(X_{ t}(x))-\Pa f_n(X_{ t}(x'))|
$$
is a sub-martingale whose expectation is thus increasing with time. Note also that $\Pa f_n(X_{\infty}(x'))=f_n(X_{\infty}(x'))$ almost surely since $X_{\infty}(x')\in E_{\eff}$. The second key fact is the conclusion of Lemma \ref{lem:approx_limit}, which is to be used for $K=K$,\, $K^{+}=K_{\eps}$, \, $x=x$, $h=\frac{\eps}{8}$, a well-chosen $\eta_x \equiv \eta_{K, K_\eps,x,\eps}$ and all $x'\in B(x,\eta_x)\cap K$.

We then proceeds to estimate $\mathbb{E}(f(X_t(x'))$ as follows:
\begin{align*}
    &|\mathbb{E}(f(X_t(x'))|\\
    &\le|\mathbb{E}(f(X_t(x')) \\
    &=|\mathbb{E}(\left(f(X_{ t}(x'))-f(X_{\infty}(x'))\right)|\\
    &\leq\mathbb{E}(d_{\text{har}}(X_{\infty}(x'),X_{ t}(x'))) \\ 
    &\leq \sum_{n\geq 1} 2^{-n} \mathbb{E}\left(|\Pa f_n(X_{ t}(x))-\Pa f_n(X_{ t}(x'))|\right)\\& \quad  \quad \quad +2^{-n}\mathbb{E}\left(|\Pa f_n(X_{ t}(x))-\Pa f_n(X_{\infty}(x))|\right) \\& \quad \quad \quad \quad \quad \quad +2^{-n}\mathbb{E}\left(|\Pa f_n(X_{\infty}(x))-\Pa f_n(X_{\infty}(x'))|\right) \\
    &\leq \sum_{n\geq 1} 2^{-n}\mathbb{E}\left(|\Pa f_n(X_{ t}(x))-\Pa f_n(X_{\infty}(x))|\right)\\& \quad \quad \quad +2^{-n}\mathbb{E}\left(|\Pa f_n(X_{\infty}(x))-\Pa f_n(X_{\infty}(x'))|\right)\times 2\\
    &\leq \sum_{n\geq 1} 2^{-n}\mathbb{E}\left(|\Pa f_n(X_t(x))-\Pa f_n(X_{\infty}(x))|\right)\\& \quad \quad \quad +2^{-n}\mathbb{E}\left(|f_n(X_{\infty}(x))- f_n(X_{\infty}(x'))|\right)\times 2 \\
    &\leq \sum_{n\geq 1} 2^{-n}\mathbb{E}\left(|\Pa f_n(X_t(x))-\Pa f_n(X_{\infty}(x))|\right)\\& \quad \quad \quad +2^{-n}\mathbb{E}\left(  d \left(X_{\infty}(x),X_{\infty}(x') \right)\wedge 1 \right)\times 2 \\
    &\leq \sum_{n\geq 1} 2^{-n}\mathbb{E}\left(|\Pa f_n(X_t(x))-\Pa f_n(X_{\infty}(x))|\right)\\& \quad \quad \quad +2^{-n}(\frac{\eps}{8}+\mathbb{P}(d(X_{\infty}(x),X_{\infty}(x'))>\frac{\eps}{8}))\times 2 \\
    &\leq \sum_{n\geq 1} 2^{-n}\mathbb{E}\left(|\Pa f_n(X_t(x))-\Pa f_n(X_{\infty}(x))|\right)\\& \quad \quad \quad +2^{-n}(\frac{3\eps}{16}+2\mathbb{P}(\calB_x^c)))\times 2 \\
    &\leq \sum_{n\geq 1} 2^{-n}\mathbb{E}\left(|\Pa f_n(X_t(x))-\Pa f_n(X_{\infty}(x))|\right)\\& \quad \quad \quad + \frac{7\eps}{8}
\end{align*}
Using that the $f_n$ are bounded by $1$, the first term above converges to $0$ when $t$ approaches infinity, and is thus inferior to $\frac{\eps}{8}$ for all $t>T_x$ for some $T_x$.

By covering $K$ by a finite number of balls (we can because $K$ is compact) of centers $x_i$ radius $\eta_{x_i}$ for $i=1 \ldots I$, we find that for all $t\geq\max_i T_{x_i}$ it holds
$$\sup_{x' \in K}\E\b{f(X_t(x'))}\le \eps.$$
Since this is true for all $\eps>0$, we have eventually:
$$\lim_{t \to + \infty} \sup_{x \in K}\E\b{f(X_t(x))}=0.$$
\end{proof}

\section{Examples}\label{sec:examples}
We will now use Theorem \ref{main_theorem} and Corollary \ref{conv_simple} on concrete examples to get homogeneization results. Once the framework is settled, we only have to check the common hypothesis of both Theorem \ref{main_theorem} and  Corollary \ref{conv_simple} to have the convergence in law respectively in the sense of the Meyer--Zheng topology and in the sense of the finite-dimensional distributions.  

\subsection{Diffusion in a simplex (see~\cite{faure2022averaging})}\label{sec:simplex}
Consider $E$ a closed simplex in $\R^n$, that is the non-degenerate intersection of $n+1$ affine half-spaces $E_i$, $i=1 \ldots n+1$. $E_{\eff}$ is defined to be the $n+1$ vertices of $E$: we have therefore that $E$ is compact and $E_{\eff}$ is a finite set. We take $\mathcal{D}_{\eff}=\mathcal{C}_c(E_{\eff})$ which is here the set of all real functions on $E_{\eff}$. We assume in this example that the dominant process is a martingale and that the subdominant process is general, we write:
$$\mathrm{d}X_t^{\gamma}=\sqrt{\gamma}\sigma\left(X_t^{\gamma}\right)\mathrm{d}W_t+\sigma_0\left(X_t^{\gamma}\right)\mathrm{d}B_t+b\left(X_t^{\gamma}\right)\mathrm{d}t,$$
where we assume that $\sigma, \sigma_0: O \to \mathcal{M}_n(\R)$ and $b: O \to \R^n$ are Lipschitz continuous in $E$ (for the Euclidean distance). Thus we have:
\begin{equation*}
     \La_{(0)}=\langle b, \nabla_x\rangle+\frac{1}{2}\langle \sigma_0\sigma_0^{\dagger} \nabla_x, \nabla_x\rangle \quad \text{and} \quad  \La_{(1)}=\frac{1}{2}\langle  \sigma\sigma^{\dagger} \nabla_x, \nabla_x\rangle
\end{equation*} 

In order to obtain an appropriate behaviour of the above process at the boundary $\partial E$, it is necessary to add several assumptions:
\begin{enumerate}[label=(\roman*)]
    \item For all $x \in \partial E$, $\sigma(x)$ and $\sigma_0(x)$ have their images in the tangent space of $\partial E_i$ for each $i$ with $x \in \partial E_i$. In particular $\sigma(x) = 0$ if $x \in E_\eff$.
    \item For all $x \in \partial E$, $b(x)$ vanishes or points to the interior of $E$.
    \item $\sigma(x) = 0$ if and only if $x \in E_\eff$.
\end{enumerate}

The above special assumptions ensure in particular that the process stays in $E$ almost surely for any $\gamma>0$ and $t\geq 0$ (see~\cite{faure2022averaging}).
\begin{Lem} Assume that in addition to Lipschitz continuity, $\sigma$, $\sigma_0$ and $b$ satisfy i) and ii) above. Then $X_t^{\gamma}$ belongs to $E$ for all time $t$.
\end{Lem}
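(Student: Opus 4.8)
The plan is to control the process with a single barrier adapted to the convex set $E$, rather than with one affine constraint at a time. Since $E$ is a simplex it is closed and convex, so the function $\xi(x)\eqdef\tfrac12\,d(x,E)^2$ is convex and $C^{1,1}$ on $\R^n$, with $\nabla\xi(x)=x-\Pi_E(x)$ ($\Pi_E$ the $1$-Lipschitz metric projection onto $E$), $\{\xi=0\}=E$, and an a.e.-defined bounded Hessian $0\le\nabla^2\xi(x)\le\mathrm{Id}$. The key structural fact is that $\nabla\xi(x)$ lies in the normal cone of $E$ at $z\eqdef\Pi_E(x)$, which for the simplex $E=\bigcap_i\{\xi_i\le 0\}$ ($\xi_i$ affine, $|\nabla\xi_i|=1$) is exactly the cone generated by the normals $\nabla\xi_i$ of the faces active at $z$; likewise, where $\nabla^2\xi(x)$ exists it is the orthogonal projection onto the span of those active $\nabla\xi_i$'s.

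Three estimates, read at the boundary point $z\in\partial E$, then close the argument. Assumption~(i) says the columns of $\sigma(z)$ and $\sigma_0(z)$ lie in $\bigcap_{i\text{ active at }z}\ker\nabla\xi_i$, hence are orthogonal to $\nabla\xi(x)$ and to every vector in $\mathrm{range}\,\nabla^2\xi(x)$; combined with Lipschitz continuity this gives $|\nabla\xi(x)^{\dagger}\sigma(x)|+|\nabla\xi(x)^{\dagger}\sigma_0(x)|\le L\,|x-z|$ and $\mathrm{tr}\!\big(\nabla^2\xi(x)\,(\gamma\sigma\sigma^{\dagger}+\sigma_0\sigma_0^{\dagger})(x)\big)\le C\,|x-z|^2=2C\xi(x)$. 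Assumption~(ii) says $b(z)$ vanishes or lies in the tangent cone of $E$ at $z$, hence $\nabla\xi(x)^{\dagger}b(z)\le 0$, and Lipschitz continuity of $b$ gives $\nabla\xi(x)^{\dagger}b(x)\le L\,|x-z|^2=2L\xi(x)$. Applying It\^o's formula for $C^{1,1}$ functions (justified by mollifying $\xi$, applying It\^o to the mollification and letting the parameter tend to $0$, the estimates above being stable under this limit up to vanishing errors), one gets for the stopped process at $\tau_R\eqdef\inf\{t:\,|X_t^{\gamma}|\ge R\}$:
$$
\xi(X_{t\wedge\tau_R}^{\gamma}) = \xi(X_0^{\gamma}) + M_{t\wedge\tau_R} + A_{t\wedge\tau_R},\qquad \d A_s\le C'\,\xi(X_s^{\gamma})\,\d s,
$$
with $M$ a true martingale on $[0,\tau_R]$ and $\xi(X_0^{\gamma})=0$ since $X_0^{\gamma}\in E$. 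Taking expectations, Gr\"onwall's lemma yields $\E[\xi(X_{t\wedge\tau_R}^{\gamma})]=0$; letting $R\to\infty$ (non-explosion follows from the linear growth of the globally Lipschitz coefficients) and using path-continuity gives $\xi(X_t^{\gamma})=0$ for all $t$ almost surely, i.e. $X_t^{\gamma}\in E$.

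The main obstacle is the regularity deficit: $\xi$ is only $C^{1,1}$, so one needs the mollification step together with the (routine but slightly delicate) check that the normal-cone estimates survive convolution uniformly in the parameter — and this is where the simplex geometry truly enters, since for a general polytope one must additionally track the extra vanishing of $\sigma,\sigma_0,b$ along lower-dimensional faces (recall $\sigma=0$ on $E_{\eff}$). One may alternatively argue face by face, running the absorbing-set argument of Section~\ref{sec:diff} on each affine $\xi_i$: with $Z_t^i\eqdef\xi_i^+(X_t^{\gamma})$ one checks, exactly as above, that the normal diffusion coefficients $\nabla\xi_i^{\dagger}\sigma$, $\nabla\xi_i^{\dagger}\sigma_0$ are $O(\xi_i^{+})$ near the $i$-th facet, so that the local time of $\xi_i(X^{\gamma})$ at $0$ vanishes by the occupation-time formula, and that $\nabla\xi_i^{\dagger}b$ is non-positive there, whence $Z^i\equiv 0$; this route, however, carries more book-keeping near edges and vertices. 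The complete verification in the simplex case is carried out in~\cite{faure2022averaging}.
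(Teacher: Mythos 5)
Your proof is correct in substance, but it takes a genuinely different route from the paper: the paper does not give a self-contained argument for this lemma at all — it invokes the invariance result of~\cite{faure2022averaging}, and its own general tool (the absorbing-set lemma of Section~\ref{sec:diff}) is an It\^o--Tanaka/local-time argument requiring a \emph{smooth} barrier $\xi$ with $\nabla\xi\,\sigma=0$ and $L\xi\le 0$ on all of $\R^n\setminus E$, which for the simplex would have to be run facet by facet after a suitable Lipschitz extension of the coefficients. You instead use the single convex $C^{1,1}$ barrier $\xi=\tfrac12 d(\cdot,E)^2$, exploit the polyhedral structure (normal cone at $\Pi_E(x)$ generated by the active affine normals, a.e.\ Hessian equal to the projection onto their span) to get $\nabla\xi^{\dagger}b\le C\xi$ and $\mathrm{tr}(\nabla^2\xi\,(\gamma\sigma\sigma^{\dagger}+\sigma_0\sigma_0^{\dagger}))\le C_\gamma\xi$, and close with mollified It\^o plus Gr\"onwall on the stopped process. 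This buys something real: the estimate is taken at the projection point, where assumptions i)--ii) genuinely hold, so the argument is insensitive to how $\sigma,\sigma_0,b$ are extended outside $E$ and it sidesteps the edge/vertex bookkeeping that the facet-by-facet local-time route (your sketched alternative, and essentially the paper's Section~\ref{sec:diff} scheme) must confront — precisely the point where the one-hyperplane argument is delicate, since $\nabla\xi_i^{\dagger}\sigma$ is only known to vanish on the face $\partial E_i\cap E$, not on the whole hyperplane. The price is the $C^{1,1}$ It\^o step, which you correctly reduce to a mollification with $O(\eps)$ errors that are absorbed after Gr\"onwall since $\xi(X_0^\gamma)=0$. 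Two harmless remarks: your bound $|\nabla\xi(x)^{\dagger}\sigma(x)|\le L|x-z|$ is weaker than the natural $L|x-z|^2$ (as $\nabla\xi(x)=x-z\perp\mathrm{Im}\,\sigma(z)$), but it is never used in the Gr\"onwall step, and your constants depend on $\gamma$, which is immaterial since the lemma is stated for each fixed $\gamma$.
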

As a consequence, according to Section~\ref{sec:diff}, we therefore have Assumption~\ref{ass:flow} and Assumption~\ref{ass:contper}.

It has also been proved in~\cite{faure2022averaging}, Remark~6, that for all $x\in E$:
$$\Pa(x,\cdot)=\sum_{z\in E_{\eff}} H_z(x)\delta_z,$$
where $x\mapsto H_z(x)$ is the only affine function on $E$ that is: i) equal to $H_z(z)=1$ at $x=z$, and ii) $H_z(z')=0$ for any $z'\in E_{\eff}\setminus \{z\}$. In short, the proof works as follows: since the dominant process is a bounded martingale, its quadratic variation is almost surely finite so that the dominant process converges almost surely to a point where $\sigma$ is null; by iii) above the latter is a point of $E_{\eff}$. $X_t^{(1,0)}$ converges in probability towards $E_{\eff}$ when $t\to+\infty$. Since $H_z$ is affine, it holds that $\La_{(1)} \Pa = 0$ and $\Pa$ is continuous at $E_\eff$ with limit the identity; a standard application of It\^o formula implies then that indeed $\lim_{t \to +\infty} \Law(X_t^{(1,0)}(x)) = \Pa(x, \, . \,)$ and Assumption~\ref{ass:main} is checked. Furthermore, the expression of $\Pa$ gives us Assumption \ref{ass:non_dirac}.

We will next prove again the main result of~\cite{faure2022averaging}, that is:

\begin{The}
    The process $X^{(\gamma,1)}$ converges in law (for the pseudo-paths topology as well as for finite dimensional time marginals) when $\gamma$ approaches infinity towards a continuous-time Markov chain $X^\infty$ on $E_{\eff}$ of transition matrix $\La_{\eff}=\left(b(x)\cdot \nabla_x H_z(x) \right)_{x,z\in E_{\eff}}$.
\end{The}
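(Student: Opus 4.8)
The plan is simply to verify that the present situation satisfies all the hypotheses of Theorem~\ref{main_theorem} and Corollary~\ref{conv_simple}; the stated convergence (for the pseudo-path topology and for finite-dimensional time marginals) then follows immediately, with the effective generator being whatever $\La_{(0)}\Pa$ turns out to be. Most of the assumptions have in fact already been dealt with above: Assumption~\ref{ass:flow} and Assumption~\ref{ass:contper} were obtained from the Lipschitz regularity of $(\sigma,\sigma_0,b)$ via Section~\ref{sec:diff}; Assumption~\ref{ass:main}, with $\Pa(x,\cdot)=\sum_{z\in E_\eff}H_z(x)\delta_z$, and Assumption~\ref{ass:non_dirac} were checked using that the dominant process is a bounded martingale (hence converges a.s., to a zero of $\sigma$, i.e.\ to a vertex by~(iii)) and that the $H_z$ are the barycentric coordinates (so $\Pa(x,\cdot)=\delta_z$ forces $H_z(x)=1$, i.e.\ $x=z$). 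Assumption~\ref{ass:tight} (compact containment) is trivial here since $E$ is itself compact: take $K_{\eps,T}=E$. Assumption~\ref{ass:P_cont} is immediate because each $H_z$ is affine, hence continuous, so $x\mapsto\Pa(x,\cdot)=\sum_z H_z(x)\delta_z$ is continuous for convergence in distribution. In particular all the hypotheses of Proposition~\ref{cvunif} hold, so the uniform-on-compacts convergence of the dominant process towards $E_\eff$ used in Step~2 is available.

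The only genuine computation concerns Assumption~\ref{ass:effgen}. I would take $\calD_\eff=C_c(E_\eff)=\R^{E_\eff}$, which trivially contains constants and a (finite, a fortiori countable) point-separating family, so point~i) holds. For $\ph\in\calD_\eff$ the lift $\Pa\ph(x)=\sum_{z\in E_\eff}H_z(x)\ph(z)$ is an \emph{affine} function of $x\in\R^n$, so $\nabla^2_x\Pa\ph\equiv0$. Applying It\^o's formula to $t\mapsto\Pa\ph(X^{(\gamma,1)}_t)$ along $\d X^{(\gamma,1)}_t=\sqrt\gamma\,\sigma\,\d W_t+\sigma_0\,\d B_t+b\,\d t$, the second-order term vanishes, which in particular kills the $\gamma$-dependent contribution $\gamma\,\tfrac12\sigma\sigma^\dagger:\nabla^2_x\Pa\ph$, and one is left with
\[
\Pa\ph(X^{(\gamma,1)}_t)=\Pa\ph(X^{(\gamma,1)}_0)+M_t+\int_0^t b(X^{(\gamma,1)}_s)\cdot\nabla_x\Pa\ph(X^{(\gamma,1)}_s)\,\d s,
\]
where $M$ is a true martingale since $\nabla_x\Pa\ph$ is constant and $\sigma,\sigma_0$ are bounded on the compact $E$. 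Hence $\La_{(0)}\Pa\ph(x)=b(x)\cdot\nabla_x\Pa\ph(x)=\sum_{z}\bigl(b(x)\cdot\nabla_x H_z(x)\bigr)\ph(z)$, which is bounded and continuous on \emph{all} of $E$ (product of the Lipschitz field $b$ with a constant gradient), so points~ii) and~iii) follow, and $\calL_\eff[\ph](z)=\sum_{z'}\bigl(b(z)\cdot\nabla_x H_{z'}(z)\bigr)\ph(z')$, i.e.\ $\La_\eff=\bigl(b(z)\cdot\nabla_x H_{z'}(z)\bigr)_{z,z'\in E_\eff}$.

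Finally I would check Assumption~\ref{ass:wellposed}. The matrix $\La_\eff$ above is the generator of a continuous-time Markov chain on the finite set $E_\eff$: its rows sum to zero since $\sum_{z'}H_{z'}\equiv1$ gives $\sum_{z'}\nabla_x H_{z'}\equiv0$; and its off-diagonal entries are nonnegative because at a vertex $z$ a vector $v$ points into the simplex (or vanishes) if and only if $v\cdot\nabla_x H_{z'}(z)\ge0$ for all $z'\neq z$ — this is exactly the requirement that the barycentric coordinates of $z+\eps v$ stay $\ge0$ — and $b(z)$ is inward-pointing or zero by~(ii). A martingale problem for a bounded generator on a finite state space is well-posed (it is solved by the obvious jump chain with exponential holding times), so Assumption~\ref{ass:wellposed} holds with initial law $\mu_0\Pa$, which is supported on $E_\eff$. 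All of Assumptions~\ref{ass:main}--\ref{ass:wellposed} being verified, Theorem~\ref{main_theorem} and Corollary~\ref{conv_simple} give the claimed convergences towards the Markov chain of transition matrix $\La_\eff$.

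I do not expect any serious obstacle: the entire argument is bookkeeping against the general theorem, and everything beyond what was already established above is either trivial because $E$ is compact or an immediate consequence of the affinity of the harmonic lift $\Pa\ph$. The one place where a little geometry is needed is the nonnegativity of the off-diagonal entries of $\La_\eff$ in the last step, where the inward-pointing hypothesis on $b$ and the description of the inward cone at a vertex in terms of the barycentric-coordinate gradients are used.
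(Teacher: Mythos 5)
Your proposal is correct and follows essentially the same route as the paper: check the remaining assumptions of Theorem~\ref{main_theorem} and Corollary~\ref{conv_simple}, using compactness of $E$ for Assumption~\ref{ass:tight}, the affinity of $\Pa\ph=\sum_z H_z(\cdot)\ph(z)$ to kill the second-order (in particular the $\gamma$-dependent) It\^o terms and identify $\La_{(0)}\Pa\ph=b\cdot\nabla\Pa\ph$ (giving Assumptions~\ref{ass:P_cont} and~\ref{ass:effgen}), and the sign and row-sum structure of $\bigl(b(x)\cdot\nabla_x H_z(x)\bigr)_{x,z}$ for Assumption~\ref{ass:wellposed}. The only (welcome) addition is that you spell out the inward-cone-at-a-vertex argument for the nonnegativity of the off-diagonal entries, which the paper asserts without detail.
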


We just have to check the four other hypothesis.

The assumption \ref{ass:tight} is trivially verified since $E$ is a compact.

The Assumption \ref{ass:P_cont} and Assumption \ref{ass:effgen} are easily proved using the fact that $\Pa$ is affine: $\Pa \varphi$ being smooth on $E$ for all $\varphi\in\mathcal{D}_{\eff}$, point $ii)$ is a direct consequence of Itô's formula applied on $\Pa \varphi(X^{(\gamma,1)})$. Indeed, for all $\varphi\in\mathcal{D}_{\eff}$, we have:
\begin{align*}
    d(\Pa \varphi(X_t^{(\gamma,1)})&=(\gamma\La_{(1)}+\La_{(0)})(\Pa \varphi)(X_t^{(\gamma,1)})\mathrm{d}t+\nabla_x(\Pa \varphi)(X_t^{\gamma})\cdot \sigma_0(X_t^{(\gamma,1)})\mathrm{d}B_t \\ & \ \ \ \ \ \ \ \ \ \ \ \  +\sqrt{\gamma}\nabla_x(\Pa \varphi)(X_t^{\gamma})\cdot \sigma(X_t^{(\gamma,1)})\mathrm{d}W_t,
\end{align*}
integrating this inequality and using that $\La_{(1)}\Pa \varphi=0$, we get eventually:
\begin{align*}&\Pa \varphi(X_t^{(\gamma,1)}(x))-\int_0^t \La_{(0)}\Pa \varphi(X_s^{(\gamma,1)}(x))\mathrm{d}s \\ &=\Pa\varphi(x)+\int_0^t\nabla_x(\Pa \varphi)(X_t^{(\gamma,1)}(x))\cdot \sigma_0(X_t^{(\gamma,1)}(x))\mathrm{d}B_s \\ & \ \ \ \ \ +\int_0^t\sqrt{\gamma}\nabla_x(\Pa \varphi)(X_t^{(\gamma,1)}(x))\cdot \sigma(X_t^{(\gamma,1)})\mathrm{d}W_s,
\end{align*}
with the second term clearly being a martingale for the natural filtration of $X_t^{(\gamma,1)}(x)$.

For point $iii)$, using the explicit expression of $\Pa \La_{(0)}$ and the continuity of $b$ and $\nabla_x H_z$, we have for $z_0\in E_{\eff}$ that: $$\lim_{x \to z_0} \La_{(0)} \Pa \b{\ph}(x)=\lim_{x\to z_0} \sum_{z\in E_{\eff}}\varphi(z)b(x)\cdot\nabla_x H_z(x)=\sum_{z\in E_{\eff}} \varphi(z) b(z_0)\cdot \nabla_x H_z(z_0). $$
Identifying $\varphi$ with the vector $(\varphi(z))_{z\in E_{\eff}}$ we have eventually that $\La_{\eff} \varphi=\left(b(x)\cdot \nabla_x H_z(x) \right)_{x,z\in E_{\eff}} \varphi$, which is trivially bounded measurable.

Since $b(x_0)\cdot \nabla_x H_z(x_0)\leq 0$ and $b(x_0)\cdot \nabla_x H_z(x_0)\geq 0$ for $z\neq x_0$, using furthermore that $\sum_{z\in E_{\eff}} \limits b(z)\cdot \nabla_x H_z(x_0)=0$, we have that $\La_{\eff}$ is the generator of a continuous-time Markov process on $E_{\eff}$. The martingale problem is therefore well-posed and Assumption \ref{ass:wellposed} is checked.

\subsection{Diffusion in a strip}

\label{sec:exa_strip}
Define $E = [-1,1] \times \R$ and $E_{\eff}$ its border $\set{-1,1} \times \R$. We take  $\calD_{\mrm{eff}}=C_c^\infty\left(\{-1,1\}\times\R\right)$
as space of test functions. In this subsection we will study the process $X^{\gamma}=X^{(\gamma,1)}=(Y^{\gamma},Z^{\gamma})$ defined by:
$$
\begin{cases}
& \mathrm{d} Y_t^{\gamma} = \sqrt{\gamma}\sigma(Y_t^{\gamma},Z_t^{\gamma}) \, \mathrm{d} W_t+b(Y_t^{\gamma},Z_t^{\gamma}) \mathrm{d} t\\
& \mathrm{d} Z_t^{\gamma} = \gamma\sigma(Y_t^{\gamma},Z_t^{\gamma})^2 \, \mathrm{d} t = \d [Y^\gamma]_t.
\end{cases}
$$

where $b:[-1,1]\times \R\to \R$ is a Lipschitz bounded function such that $b(1,z)\leq 0$ and $b(-1,z)\geq 0$ and $\sigma:[-1,1]\times\R \to \R$ is Lipschitz bounded function with $\sigma^{-1}(\{0\})=\{\pm 1\}\times \R$. Note that when the value of $b$ does not depend of $z$-coordinate, the above process is the simplex example of Section~\ref{sec:simplex} for $n=1$ (the variable $Y$) with the addition of its quadratic variation (the variable $Z$). Adding the quadratic variation is of special interest to keep track of the ''intrinsic diffusive time'' of the dominant process through the homogenization procedure. Since our process is strong solution with Lipschitz coefficients and $Y$ remains in  $[-1,1]$ for all time, according to Section~\ref{sec:diff}, we have Assumption~\ref{ass:flow} and Assumption~\ref{ass:contper}.

The main homogenization theorem is then following.
\begin{The}
    The process $X^{(\gamma,1)}$ in $[-1,1] \times \R_+$ converges in law (pseudo-path topology and finite-dimensional) when $\gamma$ approaches infinity towards a Levy-type process of generator: 
    \begin{align*}
    \calL_{\eff} ( \ph)(y,z) =& |b|(y,z) \int_{\R_+} \ph(y,z+h) - \ph(y,z)  \d \mu(h) \\ 
    & \qquad \qquad  +  |b|(y,z) \int_{\R_+} \ph(-y,z+h) - \ph(y,z)  \d \nu(h)  \\ 
    \end{align*}
    in which $\nu$ is a finite positive measure of total mass $\nu(\R_+) = 1/2$, and $\mu$ positive Levy measure of a subordinator ($\int_{\R_+} \min(1,t) \mu(\d t) < + \infty$). The latter are given by the explicit expansion \\ $\mu(\mathrm{d}t)=\left(\frac{\pi^2}{16} \displaystyle\sum_{k=-\infty}^{+\infty} k^2 e^{-t\frac{\pi^2 k^2}{8}} \right) \mathrm{d}t$ and $\nu(\mathrm{d}t)=\left(-\frac{\pi^2}{16} \displaystyle\sum_{k=-\infty}^{+\infty} (-1)^k k^2 e^{-t\frac{\pi^2 k^2}{8}} \right) \mathrm{d}t$.
\end{The}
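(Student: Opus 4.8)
The plan is to verify the eight hypotheses of Theorem~\ref{main_theorem} for this model and then to make the effective generator $\calL_{\eff}\ph=\calL_0\Pa\ph$ explicit. Here the dominant generator is $\calL_1=\sigma^2\bigl(\tfrac12\partial_{yy}+\partial_z\bigr)$ and the perturbation is $\calL_0=b\,\partial_y$. First, $E=[-1,1]\times\R$ (and $[-1,1]\times\R_+$) is absorbing for $X^\gamma$: apply the Lemma of Section~\ref{sec:diff} with $\xi(y,z)=\pm y-1$ near $y=\pm1$ (there $\calL_0\xi=\mp b(\pm1,z)\le0$ by the sign assumptions on $b$, and $\sigma\,\partial_y\xi=0$ since $\sigma$ vanishes on $E_{\eff}$), and note $Z^\gamma$ is non-decreasing. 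Assumptions~\ref{ass:flow} and~\ref{ass:contper} come for free from the Lipschitz setting of Section~\ref{sec:diff}. For the compact containment Assumption~\ref{ass:tight}, from $Y_T^2=y^2+(\text{mart.})+[Y^\gamma]_T+2\int_0^TY_s b\,\d s$ one gets the a priori bound $\E_{(y,z)}\bigl([Y^\gamma]_T\bigr)\le 1+2\|b\|_\infty T$, uniform in $\gamma$; combined with $Z^\gamma_T=z+[Y^\gamma]_T$ and $Y^\gamma\in[-1,1]$ and Markov's inequality, this gives a suitable $K_{\eps,T}=[-1,1]\times[z_{\min},z_{\min}+R_\eps]$.

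Next I would analyse the dominant process. Since $Y^{(1,0)}$ is a bounded martingale it converges a.s.\ with $[Y^{(1,0)}]_\infty<\infty$, and the limit lies in $\{-1,1\}$: an interior limit would keep $\sigma^2$ bounded below near time $\infty$ and force $[Y^{(1,0)}]_\infty=\infty$. Writing $Y^{(1,0)}_t=\beta_{\langle Y^{(1,0)}\rangle_t}$ with $\beta$ a standard BM from $y$, one checks $\langle Y^{(1,0)}\rangle_\infty=\tau:=\inf\{s:|\beta_s|=1\}$, hence
$$\Pa\bigl((y,z),\,\cdot\,\bigr)=\Law\bigl(\beta_\tau,\,z+\tau\bigr),$$
so Assumption~\ref{ass:main} holds and $\Pa$ depends neither on $z$ (in its first marginal) nor on the precise $\sigma$. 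Continuity of $y\mapsto\Pa((y,z),\cdot)$ (Assumption~\ref{ass:P_cont}) and the fact that $\Pa((y,z),\cdot)$ is a Dirac mass exactly when $y=\pm1$ (Assumption~\ref{ass:non_dirac}) follow from the classical joint law of $(\beta_\tau,\tau)$: $\beta_\tau=\pm1$ with probability $\tfrac{1\pm y}{2}$, and the defective densities $p_{\pm1}(y,t)\,\d t:=\P_y(\tau\in\d t,\ \beta_\tau=\pm1)$ admit the eigenfunction expansion over the Dirichlet Laplacian on $[-1,1]$, with $\phi_k(y)=\sin\bigl(\tfrac{k\pi(y+1)}{2}\bigr)$, $\lambda_k=\tfrac{\pi^2k^2}{8}$, namely $p_1(y,t)=-\tfrac12\sum_{k\ge1}e^{-\lambda_kt}\phi_k(y)\phi_k'(1)$ and $p_{-1}(y,t)=\tfrac12\sum_{k\ge1}e^{-\lambda_kt}\phi_k(y)\phi_k'(-1)$.

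For Assumption~\ref{ass:effgen}, $\Pa\ph$ solves the hypoelliptic boundary value problem $\calL_1u=0$ on $(-1,1)\times\R$ with smooth data $u(\pm1,z)=\ph(\pm1,z)$, hence is smooth inside and $C^2$ up to the boundary (e.g.\ via a Fourier transform in $z$, reducing to $u_{yy}=-2\partial_zu$ and the bounded representation $\widehat{\Pa\ph}(y,\xi)=A(\xi)\cosh(\kappa y)+B(\xi)\sinh(\kappa y)$ with $\kappa=\sqrt{-2i\xi}$); Itô's formula then yields point~ii) with finite-variation part $b\,\partial_y\Pa\ph(X^\gamma_t)\,\d t$ (using $\calL_1\Pa\ph=0$), and point~iii) because $\partial_y\Pa\ph$ extends continuously to $E_{\eff}$. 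The crux of the example — and the step I expect to be the main obstacle — is then the explicit identification of $\calL_{\eff}\ph(1,z)=b(1,z)\,\partial_y\Pa\ph(1,z)$: one cannot differentiate the series for $p_{\pm1}$ under the integral sign, and the correct route is to subtract $\ph(1,z)$. Using $\int_0^\infty(p_1+p_{-1})(y,t)\,\d t=1$,
$$\frac{\Pa\ph(y,z)-\ph(1,z)}{y-1}=\int_0^\infty\!\frac{p_1(y,t)}{y-1}\bigl(\ph(1,z+t)-\ph(1,z)\bigr)\d t+\int_0^\infty\!\frac{p_{-1}(y,t)}{y-1}\bigl(\ph(-1,z+t)-\ph(1,z)\bigr)\d t.$$
In the first integral $\ph(1,z+t)-\ph(1,z)=O(\min(1,t))$ while $|p_1(y,t)/(y-1)|\le\mu'(t)$ with $\int\min(1,t)\mu'(t)\,\d t<\infty$, so dominated convergence applies; in the second, $p_{-1}(y,\cdot)/(1-y)$ is a sub-probability density of total mass $\tfrac12$ (namely $\P_y(\beta_\tau=-1)/(1-y)$) converging pointwise to $\nu'$, hence weakly by Scheffé's lemma, so one passes to the limit against the bounded continuous $\ph(-1,z+\cdot)-\ph(1,z)$. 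This gives
$$\partial_y\Pa\ph(1,z)=-\int_{\R_+}\!\bigl(\ph(1,z+t)-\ph(1,z)\bigr)\mu(\d t)-\int_{\R_+}\!\bigl(\ph(-1,z+t)-\ph(1,z)\bigr)\nu(\d t),$$
with $\mu'(t)=-\partial_yp_1(1,t)=\tfrac{\pi^2}{8}\sum_{k\ge1}k^2e^{-\lambda_kt}$ and $\nu'(t)=-\partial_yp_{-1}(1,t)=-\tfrac{\pi^2}{8}\sum_{k\ge1}(-1)^kk^2e^{-\lambda_kt}$, i.e.\ exactly the announced measures after folding $k\in\Z$; one checks $\mu,\nu\ge0$ (limits of the nonnegative $p_{\pm1}(y,\cdot)/(1-y)$), $\int_0^1t\,\mu(\d t)<\infty$, $\mu(\R_+)=\sum_{k\ge1}1=+\infty$, and $\nu(\R_+)=\lim_{y\to1}\tfrac{(1-y)/2}{1-y}=\tfrac12$. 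Since $b(1,z)\le0$ gives $-b(1,z)=|b|(1,z)$, and the mirror computation at $y=-1$ (where the symmetry $y\mapsto-y$ of $\calL_1$ swaps the two boundary points and $b(-1,z)\ge0$), one recovers the stated $\calL_{\eff}$.

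Finally, for Assumption~\ref{ass:wellposed} the effective process on $\{-1,1\}\times\R$ is a pure-jump Markov process: $Y^\infty$ flips $\pm1\mapsto\mp1$ at the bounded state-dependent rate $\tfrac12|b|(Y^\infty_t,Z^\infty_t)$, simultaneously incrementing $Z^\infty$ by a $\nu/\nu(\R_+)$-distributed amount, while between flips $Z^\infty$ grows like a subordinator with Lévy measure $\mu$ run at speed $|b|(Y^\infty_t,Z^\infty_t)$. Since $|b|$ is bounded Lipschitz and $\int\min(1,t)\mu(\d t)<\infty$, this is the unique strong solution of an SDE driven by Poisson random measures, so well-posedness follows from standard results on Lévy-driven SDEs (\cite{kurtz2011}), or one constructs it directly by interlacing. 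Theorem~\ref{main_theorem} and Corollary~\ref{conv_simple} then give the claimed convergence of $X^{(\gamma,1)}$ to this effective process, both for the pseudo-path topology and for finite-dimensional time marginals.
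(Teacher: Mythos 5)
Your proposal is correct in substance and follows the paper's overall architecture: verify the assumptions of Theorem~\ref{main_theorem} (flow/perturbation continuity from the Lipschitz setting of Section~\ref{sec:diff}, compact containment from a quadratic-variation bound, the bounded-martingale argument for the dominant process), compute $\Pa$ through a time change reducing to Brownian motion killed at $\pm1$, subtract $\ph(1,z)$ before letting $y\to\pm1$, and invoke~\cite{kurtz2011} for well-posedness. Where you genuinely depart from the paper is the computational route to $\mu$ and $\nu$. The paper starts from the reflection (theta-function) series of~\cite{borodin2015handbook} for $\P_y(\tau\in\d t;W_\tau=\pm1)$ and then needs Poisson summation (Appendix~\ref{sec:calc_band}) to reach the exponential series, to prove positivity of $\nu$ (an alternating-series plus monotonicity argument) and to compute $\nu(\R_+)=\tfrac12$. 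You instead work directly with the Dirichlet eigenfunction expansion $p_{\pm1}(y,t)=\mp\tfrac12\sum_{k\ge1}e^{-\lambda_k t}\phi_k(y)\phi_k'(\pm1)$, $\phi_k(y)=\sin\bigl(k\pi(y+1)/2\bigr)$, $\lambda_k=\pi^2k^2/8$, which gives the same series after folding $k\in\Z$, makes the domination $|p_1(y,t)/(y-1)|\le\mu'(t)$ immediate from $|\phi_k'(y)|\le|\phi_k'(1)|$ (the analogue of the paper's $|\cos|\le1$ bound), and yields positivity of $\mu$ and $\nu$ for free as limits of the nonnegative densities $p_{\pm1}(y,\cdot)/(1-y)$; this is a leaner path to facts the paper establishes by explicit analysis. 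Your variants for compact containment (via $Y_T^2$) and for the regularity of $\Pa\ph$ needed in Assumption~\ref{ass:effgen}~ii) (Fourier in $z$, or hypoelliptic regularity) are also legitimate substitutes for the paper's arguments.

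One step should be tightened: the appeal to Scheff\'e's lemma for the $\nu$-term is circular as written, since Scheff\'e requires knowing beforehand that $\int_0^\infty\nu'(t)\,\d t=\tfrac12$, i.e.\ that none of the mass of $p_{-1}(y,\cdot)/(1-y)$ (each of total mass exactly $\tfrac12$ by gambler's ruin) escapes to $t=0$ as $y\to1$; the identity $\nu(\R_+)=\lim_{y\to1}\tfrac{(1-y)/2}{1-y}$ is exactly what needs justification, not a computation of it. The repair is routine: either compute $\nu(\R_+)=\tfrac12$ directly from the series, as the paper does via Poisson summation ($\nu=\tfrac12F'(t)\,\d t$ with $F(0^+)=0$, $F(+\infty)=1$), or note that by the strong Markov property $\P_y(\tau\le\eps,\ \beta_\tau=-1)\le(1-y)\,\P_0\bigl(\inf_{s\le\eps}\beta_s\le-1\bigr)$, so the mass of $p_{-1}(y,\cdot)/(1-y)$ on $[0,\eps]$ is small uniformly in $y$, while on $[\eps,\infty)$ the domination by $\mu'$ applies; this gives the missing tightness at $t=0$ and hence the weak convergence and the value of the total mass. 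With that supplement, your identification of $\calL_{\eff}$ and the conclusion of the theorem go through exactly as you describe.
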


We have to check the six remaining hypothesis. 

On the one hand we have that $(Y^{(1,0)}_t)_t$ is a bounded martingale that converges almost surely to $Y^{(1,0)}_{\infty}\in \{\pm 1\}$. On the other hand, since $Y^{(1,0)}_t=\int_0^t \sigma(Y_t^{(1,0)},Z_t^{(1,0)}) \mathrm{d}W_s$, we have using Itô isometry and $|Y_t|\leq 1$ that:
$$\int_0^{t} \mathbb{E}_{(y,z)}\left(\sigma(Y_t^{(1,0)},Z_t^{(1,0)})^2 \right) \mathrm{d}s=\mathbb{E}_{(y,z)}\left(\left(\int_0^t  \sigma(Y_t^{(1,0)},Z_t^{(1,0)})\mathrm{d}W_s\right)^2\right)\leq 1. $$
The left hand side above is exactly the expectancy of $Z^{(1,0)}_t-z$ and since $(Z^{(1,0)}_t)_t$ is increasing, we may take the limit in the last inequality to find that $\mathbb{E}(Z^{(1,0)}_{\infty})\leq z+1$. This implies in particular that $Z^{(1,0)}_{\infty}<+\infty$ almost surely. Finally, $(Y_t^{(1,0)},Z^{(1,0)})_t$ converges almost surely when $t\to+\infty$ towards a random variable $(Y^{(1,0)}_{\infty},Z^{(1,0)}_{\infty})$ and Assumption~\ref{ass:main} is verified.

Next, we recall that: \begin{align*}Y_t^{\gamma}&=\sqrt{\gamma}\int_0^t  \sigma(Y_t^{\gamma},Z_t^{\gamma})\mathrm{d}W_s+\int_0^t b(Y_s^{\gamma},Z_s^{\gamma})\mathrm{d}s+y \\
&=\sqrt{\gamma}M_t+B_t, 
\end{align*}
in which $Y_t^{\gamma}$ is bounded in absolute value by $1$ and $B_t$ by $t\times \|b\|_{\infty}+1$; hence Itô's isometry gives us
for all $t\geq 0$ and $\gamma\ge0$: 
$$\sup_{(y,z)}\mathbb{E}_{(y,z)}\left( \gamma \int_0^t \sigma(Y_t^{\gamma},Z_t^{\gamma})^2 \mathrm{d}s\right) \leq (1+t\times \|b\|_{\infty}+1)^2.$$
The term inside the expectancy above is exactly $Z_t^{\gamma}-z$, so using Markov's inequality, we have that for all $\eps>0$, there exists $M_{\eps,t}>0$ such that:

$$\sup_{(y,z)} \mathbb{P}_{(y,z)}\left(|Z_t^{\gamma}-z|>M_{\eps,t}\right)\leq \eps.$$

Taking $K_{\eps,T}=[-1,1]\times [-z_0,z_0+M_{\eps,T}]$ for $z_0>0$ and using that $Z_t^{\gamma}$ is increasing in time for $\gamma$ fixed, we have that:
$$\underset{y\in[-1,1],-z_0\leq z \leq z_0}{\sup}\,\mathbb{P}_{(y,z)}\left(\exists t\in [0,T], (Y_t^{\gamma},Z_t^{\gamma})\in K_{\eps,T}^{c}\right)\leq \eps.$$
The proof works in particular for $b=0$ (when we only consider the dominant process). The compact containment condition in Assumption \ref{ass:tight} is thus checked. 

To prove the remaining assumptions, Assumption \ref{ass:P_cont}, Assumption \ref{ass:non_dirac}, Assumption \ref{ass:effgen} and Assumption \ref{ass:wellposed}, we first need to have an explicit expression of $\Pa$ and its derivative with respect to $y$. 

We start with a time change: the process of generator $\La_{(1)}/\sigma^2$ is of the form:
$$
\begin{cases}
\displaystyle \d \tilde{Y}_t = d W_t \\
\displaystyle  \d \tilde{Z}_t =  d t .
\end{cases}
$$
 and we have $(\tilde{Y}_{S_t},\tilde{Z}_{S_t})=(Y_{t},Z_{t})$ where $\int_0^{t} \sigma^{-2}(Y_s,Z_s)\mathrm{d}s=S_t$. On the one hand if we write $\tau:= \tau_{-1,1}(W)$ the first hitting time of $1$ or $-1$ by the Brownian motion $W$ starting at $y$, we have that $\tilde{Y}_{\tau}\in \{\pm 1\}$ and $\tilde{Y}_t\in ]-1,1[$ for all $t<\tau$. Since $Y_t\in]-1,1[$ for all $t\geq 0$ and $Y_{\infty}=\{\pm 1\}$, we can write $S_{\infty}=\tau$ and obtain $Y_{\infty}=\tilde{Y}_{\tau}=W_{\tau}$ and thus $Z_{\infty}=\tilde{Z}_{\tau}= \tau$. Eventually we have for any $\ph : E_{\eff} \to \R $:
    $$ \Pa(\ph)(y,z) = \mathbb{E}_{(y,z)}(\ph(Y_{\infty},Z_{\infty})) = \E_y[ \ph(W_{\tau}   ,z+  \tau )]$$

As a consequence,
\begin{align*}
 \Pa \varphi(y,z) 
 &=\int_0^{+\infty}\mathbb{P}_y(\tau\in \mathrm{d}t ; W_{\tau}=1)\varphi(1,z+t)\mathrm{d}t \\
 & \ \ \ \ \ \ \ \ +\int_0^{+\infty}\mathbb{P}_y(\tau\in \mathrm{d}t ; W_{\tau}=-1)\varphi(-1,z+t)\mathrm{d}t, 
\end{align*}
where $\mathbb{P}_y(\tau\in \mathrm{d}t ; W_{\tau}=\pm 1)$ is a transparent notation for the probability distribution of $(\tau,W_\tau)$. This explicit expression of $\Pa$ and a dominated convergence argument gives us easily that if $\ph \in C_b(E_{\eff})$, $\Pa \ph$ is continuous on $E$ with limit $\ph$ at $E_\eff$ and hence point Assumption~\ref{ass:P_cont}. The explicit expression of $\Pa$ furthermore gives us immediately Assumption \ref{ass:non_dirac}. Standard regularity results on parabolic PDE (see~\cite[Ch.~$7$]{evans2010partial}) gives us that $(y,z)\mapsto  \Pa \varphi(y,z) $ is in $\mathcal{C}^{\infty}(E)$. For the sake of completeness, we give in appendix an elementary proof of the fact that $\Pa \varphi$ is infinitely differentiable on $E$ using an explicit expression of $\mathbb{P}_y(\tau\in \mathrm{d}t ; W_{\tau}=1)$. We can thus use Itô's formula on to get Assumption~\ref{ass:effgen} point $ii)$.

We denote $\mu,\nu$ the two measures defined for $t>0$ by: $$\left\{\begin{array}{ll}
     \mu(\mathrm{d}t)&=\displaystyle-\underset{y\to1}{\lim}\frac{\partial}{\partial y} \mathbb{P}_y(\tau\in \mathrm{d}t ; W_{\tau}= 1)\mathrm{d}t  \\
     & \\
     \nu(\mathrm{d}t)&=\displaystyle-\underset{y\to1}{\lim}\frac{\partial}{\partial y} \mathbb{P}_y(\tau\in \mathrm{d}t ; W_{\tau}=-1)\mathrm{d}t 
\end{array}\right.$$
and for $t\le0$ by $\mu(\mathrm{dt})=\nu(\mathrm{d}t)=0$. We prove in Appendix~\ref{sec:calc_band} that these two measures are positive, and are in fact Levy measures of subordinators. We are looking for an explicit expression of $\La_{\eff}\varphi(1,z)$ using $\mu$ and $\nu$, but we cannot directly take the limit under the integral when $y$ approaches $1$ in $\partial_y\Pa \varphi$ since $\int \ph(t,z) \mu(\d t)$ could be infinite. We thus need to substract to $\ph(y',z')$ the constant $\ph(1,z)$ and obtain
\begin{align*}
\underset{y\to 1}{\lim}\La_{(0)}\Pa \, \varphi (y,z)
    &= \underset{y\to 1}{\lim}\, b(y,z) \partial_y \Pa \left( \varphi-\varphi(1,z)\right) (y,z) \\ 
\end{align*}

We prove in appendix using a standard dominated convergence argument that we can take the following limit under the integral, $b$ being continuous by assumption:
\begin{align*}
    \underset{y\to 1}{\lim}\La_{(0)}\Pa \, \varphi (y,z)
    &=-b(1,z)\int_{0}^{+\infty} (\varphi(1,z+t)-\varphi(1,z))\mu(\mathrm{d}t) \\ & \ \ \ \ \ \ \ -b(1,z)\int_{0}^{+\infty} (\varphi(-1,z+t)-\varphi(1,z))\nu(\mathrm{d}t) \numberthis{} \label{eq1} \\
\end{align*}
where the last two integrals are well-defined and uniformly bounded in $z$ since we have $\varphi\in \mathcal{C}_c^{\infty}(\{-1,1 \}\times\R)$ and $1\wedge |x|$ integrable for $\mu$ and $\nu$ is finite.
\begin{Rem}
    We remind that $b(1,z)\leq 0$ (the process stays in $E$) and therefore that $-b(1,z)\geq 0$.
\end{Rem}

Doing exactly the same calculus as above, but taking $y\to-1$ instead of $y\to1$, we find:
\begin{align*}
   \underset{y\to -1}{\lim}\La_{(0)}\Pa \, \varphi (y,z) 
    &=b(-1,z)\int_{0}^{+\infty} (\varphi(1,z+t)-\varphi(-1,z))\nu(\mathrm{d}t) \\ & \ \ \ \ \ \ +b(-1,z)\int_{0}^{+\infty} (\varphi(-1,z+t)-\varphi(-1,z))\mu(\mathrm{d}t)\numberthis{} \label{eq2} \\ 
\end{align*}
Combining \eqref{eq1} and \eqref{eq2} and using that $\mu(1\wedge|x|),\nu(1) <+\infty$ we get that $(\pm 1,z)\mapsto\La_{\text{eff}} \, \varphi(\pm 1,z)$ is a bounded measurable function on $E_{\eff}$ and we have therefore that Assumption \ref{ass:effgen} $iii)$ is checked. Furthermore $\La_{\eff}$ is a Levy generator constructed using homogeneous subordinators on $\R_+$: the martingale problem is well-posed and Assumption \ref{ass:wellposed} is checked. A slightly general reference that enables to obtain martingale well-posedness of Levy generators is Theorem $3.1$ of~\cite{kurtz2011}.

\subsection{Diffusion in an Euclidean ball}\label{sec:exa_ball}

We define $E=\overline{\mathbb{B}}^n \subset \R^n$ the closed unit ball of $\R^n$ and $E_{\eff}=\mathbb{S}^{n-1}=\partial \mathbb{B}^n $ its border, the unit sphere. We take  $\calD_{\mrm{eff}}=C_c^\infty\left(\mathbb{S}^{n-1}\right)$ with is the set of all continuous functions on $E_{\eff}$ as space of test functions. We consider a purely diffusive dominant process on the closed unit ball $\overline{\mathbb{B}}^n$ of $\R^n$ and a repulsive drift on the sphere for the subdominant one, we write: $$\mathrm{d}X_t^{\gamma}=\sqrt{\gamma}\sigma(X_t^{\gamma})\mathrm{d}W_t+b(X_t^{\gamma})\mathrm{d}t,$$
where $\sigma: \overline{\mathbb{B}}^n \to \R_+$ is Lipschitz and vanishes \emph{exactly} on $\partial\overline{\mathbb{B}}^n=\mathbb{S}^{n-1}$, while $b:\overline{\mathbb{B}}^n\to \R^n$ is Lipschitz and points towards the interior of the ball at points of $\mathbb{S}^{n-1}$ ($x \cdot b(x) \leq 0$). Our process is thus a strong solution to a SDE with Lipschitz coefficients. Moreover, by extending $\sigma$ and $b$ on $\R^n$ to Lipschitz functions with $\sigma(x) = 0$ and $x\cdot b(x)\leq 0$ for $x$ outside the closed ball, we readily obtain:
$$
\gamma \sigma(x) \Delta_x( \abs{x}^2 -1) + b(x) \cdot \nabla_x( \abs{x}^2 -1) \leq 0, \qquad x \notin \overline{\mathbb{B}}^n
$$
According to Section~\ref{sec:diff}, the process remains in $E$ for all time and we already have Assumption~\ref{ass:flow} and Assumption~\ref{ass:contper}.

We will denote by 
$$b_r(x) \eqdef \langle b(x) , \frac{x}{\abs{x}}  \rangle \in \R , $$
and 
$$
b_\theta(x) \eqdef b(x) - b_r(x)  \frac{x}{\abs{x}} \in \R^n ,
$$
the polar decomposition of $b$. We also denote by 
$$ T\ph(x) \eqdef (\nabla_x - \frac{x}{\abs{x}} \langle \frac{x}{\abs{x}} , \nabla_x \rangle) \ph(x) \in  T_x \S^{n-1},
$$
the (tangent-valued) gradient of a smooth map on the unit sphere $\ph: \S^{n-1} \to \R$ evaluated in $x \in \S^{n-1}$.

\begin{The}
The process $X^{(\gamma,1)}$ with values in $\overline{\mathbb{B}}^n$ converges in law (with respect to the pseudo-path topology and finite-dimensional marginals) when $\gamma$ approaches infinity towards a Levy-type jump process of generator:
    \begin{align*}
    \calL_{\eff} (\ph)(x) =& b_{\theta}(x)\cdot T \ph(x)\\ & \qquad -2b_r(x)\int_{\mathbb{S}^{n-1}} \left(\ph(y)-\ph(x)-\langle T \ph(x), y-x\rangle \right)\mathrm{d}\mu_x(y) .
    \end{align*}
where $\d \mu_x(y))=\frac{1}{\|x-y\|^n}\mathrm{d}\sigma^{n-1}(y)$ is a Levy measure on the sphere with standard metric-induced measure $\sigma^{n-1}$.
\end{The}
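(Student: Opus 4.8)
The plan is to verify the hypotheses of Theorem~\ref{main_theorem} and Corollary~\ref{conv_simple}. Since $X^{(\gamma,1)}$ is a strong solution of an SDE with Lipschitz coefficients that remains in the compact set $E=\overline{\mathbb{B}}^n$ (checked above via the Lipschitz extension of $\sigma,b$), Assumptions~\ref{ass:flow} and~\ref{ass:contper} are granted by Section~\ref{sec:diff}, and Assumption~\ref{ass:tight} is trivial by compactness of $E$. It remains to identify $\Pa$ and to check Assumptions~\ref{ass:main},~\ref{ass:P_cont},~\ref{ass:non_dirac},~\ref{ass:effgen} and~\ref{ass:wellposed}.

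\textbf{The dominant process and $\Pa$.} First I would treat the dominant process, of generator $\La_{(1)}=\tfrac12\sigma(x)^2\Delta_x$. As in Section~\ref{sec:exa_strip}, the time change by $\sigma^{-2}$ turns it into a standard Brownian motion in $\overline{\mathbb{B}}^n$, which almost surely reaches $\mathbb{S}^{n-1}$ at a finite Brownian time $\tau$. Since $X^{(1,0)}$ is a bounded martingale, its total quadratic variation $\int_0^\infty\sigma(X^{(1,0)}_s)^2\,\d s$ is almost surely finite, so $X^{(1,0)}_t$ converges almost surely, necessarily to a zero of $\sigma$, i.e.\ to a point of $\mathbb{S}^{n-1}$, and the time change identifies this limit with the Brownian exit point; moreover, for $h_\varphi$ the harmonic extension of $\varphi\in C^\infty(\mathbb{S}^{n-1})$ the process $h_\varphi(X^{(1,0)}_t)$ is a bounded martingale (as in Section~\ref{sec:simplex}), so $\Law(X^{(1,0)}_\infty(x))$ is the harmonic measure. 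Hence Assumption~\ref{ass:main} holds and $\Pa(x,\d y)=\frac{1-|x|^2}{|x-y|^n}\,\d\sigma^{n-1}(y)$ on $\mathbb{S}^{n-1}$ for $x\in\mathbb{B}^n$ (with $\sigma^{n-1}$ suitably normalised), and $\Pa(x,\cdot)=\delta_x$ for $x\in\mathbb{S}^{n-1}$. Assumption~\ref{ass:P_cont} follows from this explicit formula by dominated convergence, and since the Poisson kernel has a strictly positive density on $\mathbb{S}^{n-1}$ at every interior pole, $\Pa(x,\cdot)$ is a Dirac mass precisely when $x\in\mathbb{S}^{n-1}$, giving Assumption~\ref{ass:non_dirac}.

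\textbf{The effective generator.} Next comes Assumption~\ref{ass:effgen}, with $\calD_\eff=C^\infty(\mathbb{S}^{n-1})$, which contains the constants and the countable point-separating family of restrictions to $\mathbb{S}^{n-1}$ of polynomials with rational coefficients. For $\varphi\in\calD_\eff$, $\Pa\varphi$ is the harmonic extension of $\varphi$, hence $\Pa\varphi\in C^\infty(\overline{\mathbb{B}}^n)$ by classical boundary regularity for the Dirichlet problem on a smooth domain with smooth data. Itô's formula then gives, using $\La_{(1)}\Pa\varphi=\tfrac12\sigma^2\Delta\Pa\varphi=0$, that $t\mapsto\Pa\varphi(X^{(\gamma,1)}_t)-\int_0^t b\cdot\nabla\Pa\varphi(X^{(\gamma,1)}_s)\,\d s$ is a martingale for each $\gamma>0$, with $\La_{(0)}\Pa\varphi=b\cdot\nabla\Pa\varphi$ bounded and continuous on $\overline{\mathbb{B}}^n$; this is points (ii) and (iii). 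The core of the example is the computation of $\calL_{\eff}\varphi(x)=\lim_{x'\to x}b(x')\cdot\nabla\Pa\varphi(x')$ for $x\in\mathbb{S}^{n-1}$. Using the polar decomposition $b=b_r\,\tfrac{x}{|x|}+b_\theta$ and $\nabla\Pa\varphi(x)=(\partial_\nu\Pa\varphi(x))\,x+T\varphi(x)$ at $x\in\mathbb{S}^{n-1}$ (the tangential gradient of $\Pa\varphi$ on the sphere being the intrinsic gradient of $\varphi$), one gets $\calL_{\eff}\varphi(x)=b_\theta(x)\cdot T\varphi(x)+b_r(x)\,\partial_\nu\Pa\varphi(x)$. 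To evaluate the Dirichlet-to-Neumann term one differentiates the Poisson kernel: since $\int_{\mathbb{S}^{n-1}}\Pa(rx,\d y)=1$ one may insert $-\varphi(x)$ and the linear term $-\langle T\varphi(x),y-x\rangle$ under the integral (both having vanishing normal derivative at $\mathbb{S}^{n-1}$, being restrictions of a constant and of a linear function), leaving an integrand that is $O(|y-x|^2)$; $\partial_r$ of the Poisson kernel at $r=1$ equals $-2|x-y|^{-n}$ (the term carrying $1-r^2$ dropping out), and for $y\in\mathbb{S}^{n-1}$ the bound $|rx-y|\geq\tfrac12|x-y|$ for $r$ near $1$ dominates the integrand by an integrable function and licenses passage to the limit $r\to1$. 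This yields $\partial_\nu\Pa\varphi(x)=-2\int_{\mathbb{S}^{n-1}}\bigl(\varphi(y)-\varphi(x)-\langle T\varphi(x),y-x\rangle\bigr)\,\d\mu_x(y)$, hence the announced expression for $\calL_{\eff}$. I expect this computation — in particular the uniform-in-$x'$ domination needed to interchange the limit $x'\to x$ with the singular integral, together with the bookkeeping of the subtracted linear term — to be the main obstacle.

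\textbf{Well-posedness and conclusion.} Finally, for Assumption~\ref{ass:wellposed}: $\calL_{\eff}$ is a Lévy-type generator on the compact manifold $\mathbb{S}^{n-1}$, consisting of the smooth tangent vector field $b_\theta\cdot T$ and the non-local part with jump kernel $-2b_r(x)\,\d\mu_x(y)$, which is non-negative (the drift points inward, so $b_r\leq0$ on $\mathbb{S}^{n-1}$), integrates $|y-x|^2$, and has the singularity of a $\tfrac12$-stable (Dirichlet-to-Neumann) kernel with bounded Lipschitz intensity. Well-posedness of the associated martingale problem then follows from standard results for stable-like / Lévy-type operators with regular coefficients (e.g.\ realising the effective process as a strong solution of an SDE driven by a stable Lévy process, cf.\ Theorem~$3.1$ of~\cite{kurtz2011}). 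Theorem~\ref{main_theorem} and Corollary~\ref{conv_simple} then deliver the claimed convergence in law of $X^{(\gamma,1)}$ to the effective process, for the pseudo-path topology and for finite-dimensional time marginals.
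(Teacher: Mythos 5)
Your proposal follows essentially the same route as the paper's proof: tightness from compactness of $\overline{\mathbb{B}}^n$, the bounded-martingale argument (finite quadratic variation forces $\sigma(X^{(1,0)}_\infty)=0$) for Assumption~\ref{ass:main}, the explicit Poisson kernel for Assumptions~\ref{ass:P_cont} and~\ref{ass:non_dirac}, elliptic regularity plus It\^o's formula for Assumption~\ref{ass:effgen}~ii), the radial/tangential split of $\nabla \Pa\ph$ at the boundary for the identification of $\calL_{\eff}$, and Theorem~3.1 of~\cite{kurtz2011} for Assumption~\ref{ass:wellposed}. One genuinely nicer point: your reason for discarding the compensating linear term --- the harmonic extension of $y\mapsto\langle T\ph(x),y-x\rangle$ is that linear function itself, whose normal derivative at $x$ vanishes because $T\ph(x)\perp x$ --- is cleaner than the reflection-symmetry computation the paper uses in Appendix~\ref{sec:calc_ball}, and it is correct.

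There is, however, one step where your stated domination does not suffice. After subtracting $\ph(x)+\langle T\ph(x),y-x\rangle$ the integrand is $O(\|y-x\|^2)$, but $\partial_r$ of the Poisson kernel contains, besides $-2r\,\|rx-y\|^{-n}$, the term $-(1-r^2)\,n\,\langle rx-y,x\rangle\,\|rx-y\|^{-(n+2)}$. Against the $O(\|y-x\|^2)$ factor, the bound $\|rx-y\|\ge\tfrac12\|x-y\|$ only yields a majorant of order $\|x-y\|^{1-n}$, which is \emph{not} integrable on $\mathbb{S}^{n-1}$, so this term does not "drop out" by plain dominated convergence. The paper handles it in Appendix~\ref{sec:calc_ball} by estimating the corresponding integral by $C\,|\ln(1-\|z\|)|$ and using $(1-\|z\|^2)\,|\ln(1-\|z\|)|\to 0$; alternatively one can use $\|rx-y\|^2=(1-r)^2+r\|x-y\|^2$ to trade a fractional power of $1-r$ for integrability. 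With that estimate supplied --- and with the limit taken along arbitrary $x'\to x$ in $\overline{\mathbb{B}}^n$ rather than only radially, which the paper reduces to the radial case by a rotation and a uniform $C^2$ bound on $\ph$ --- your argument matches the paper's proof.
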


We just have to check the six other hypothesis.

Since the dominant process in a bounded martingale it converges almost surely to a point $X_\infty^{(1,0)}$ by Doob's first martingale convergence theorem. By computing its quadratic variation, which must be finite by It\^o isometry, the limit $\sigma(X_\infty^{(1,0)})$ is null and therefore lies in $E_\eff$.  $X_t^{(1,0)}$ converges when $t\to+\infty$ towards $X_\infty^{(1,0)} \in E_\eff$ and Assumption~\ref{ass:main} is checked.

Assumption \ref{ass:tight} is trivially verified since $\overline{\mathbb{B}}^n$ is a compact.

In an Euclidean ball, we do have an explicit expression (see~\cite[Ch.II Sec.~$1$]{doob2012classical}) of the harmonic measure: $$\Pa(x,\cdot)=\left\{\begin{array}{ll}
    \frac{1-\|x\|^2}{\|x-y\|^n} \mathrm{d}\sigma^{n-1}(y)  &\text{if $x\in \mathbb{B}^n$}\\
    \delta_x  &\text{if $x\in \mathbb{S}^{n-1}$}
\end{array}\right.,$$
where $\sigma^{n-1}$ is the Haar probability measure on $\mathbb{S}^{n-1}$. This expression immediately gives us Assumption \ref{ass:non_dirac}.  Since $x\mapsto \Pa(x,\cdot)$ is continuous for the weak convergence topology on $\overline{\mathbb{B}}^n$ (we let the proof in appendix), $x\mapsto \Pa \varphi(x)$ is continuous on $\overline{\mathbb{B}}^n$ at points of $\mathbb{S}^{n-1}$ and we also get Assumption \ref{ass:P_cont}.

Elliptic regularity immediately gives us hat if $\ph$ is smooth on the unit sphere, then $x \mapsto \calP \ph (x)$ is also smooth (see~\cite[Ch.~$6$]{evans2010partial}, \textit{Nota Bene:} this also proves again Assumption~\ref{ass:P_cont}). We may thus use Itô's formula to get Assumption \ref{ass:effgen} point $ii)$.

We now consider the non-radial derivatives, and the explicit expression of $\Pa\varphi$ gives us immediately: $$D_{\theta} \Pa \varphi(x)=\int_{\mathbb{S}^{n-1}} \frac{1-\|x\|^2}{\|x-y\|^n}T\varphi(y)\mathrm{d}\sigma^{n-1}(y).$$
Since $T \varphi$ is in $C^{\infty}(\mathbb{S}^{n-1})$, it is Lipschitz and the continuity of the projector gives us that for all $a\in \mathbb{S}^{n-1}$ we have: \begin{equation}\displaystyle\lim_{z\to a} D_{\theta}\Pa \varphi(z)=T\varphi(a). \label{limrad}\end{equation}

A computation we leave in Appendix~\ref{sec:calc_ball} gives us that:
\begin{equation} \label{exp_Levy}
    \lim_{z\to a}\frac{\partial}{\partial r} \Pa \ph(z)=-2\int_{\mathbb{S}^{n-1}} \frac{1}{\| a-y\|^n}\left(\ph(y)-\ph(a)-\langle T\ph(a), y-a\rangle \right)\mathrm{d}\sigma^{n-1}(y),
\end{equation}
where we notice that, writing $\mu(\mathrm{d}\sigma^{n-1}(y))=\frac{1}{\|a-y\|^n}\mathrm{d}\sigma^{n-1}(y)$, we have: \begin{equation}\int_{\mathbb{S}^{n-1}}(1\wedge\|a-y\|^2) \mathrm{d}\mu(y)<+\infty \label{int_finite}\end{equation}

Thus, combining \eqref{limrad} and \eqref{exp_Levy} and,  we have that:
\begin{align*}\lim_{z\to a}\La_{(0)} \Pa\ph(z) &=-2b_r(a)\int_{\mathbb{S}^{n-1}} \left(\ph(y)-\ph(a)-\langle T\ph(a), y-a\rangle \right)\mathrm{d}\mu(y)\\ & \ \ \ \ \ \ \ \ +b_{\theta}(a)\cdot T\varphi(a).
\end{align*}
Combining the facts that $b$ and all derivatives of $\varphi$ are bounded and \eqref{int_finite}, we have that $\La_{\eff} \varphi$ is measurable bounded on $\mathbb{S}^{n-1}$ and we have point $iii)$ of Assumption \ref{ass:effgen}. Furthermore, the explicit expression of $\La_{\eff}$ gives us that it is the generator of a homogeneous (rotation invariant) Levy process on $\mathbb{S}^{n-1}$: the martingale problem is therefore well-posed and Assumption \ref{ass:wellposed} is checked. It is nonetheless not so easy to find general references that enables to obtain martingale well-posedness of Levy generators on manifold. This can be done here easily with Theorem $3.1$ of~\cite{kurtz2011}, by remarking that on the sphere, a Levy jump from $x$ to $y$, denoted $y-x = \gamma(x,u)$, can be represented by projecting a random direction in $\R^n$ onto the tangent space at $T_x \S^{n-1}$ in order to obtain the random direction of the jump $\gamma(x,u)$. This obviously yields a Lipschitz dependence in $x \mapsto \gamma(x,u)$ and the standard Lipschitz assumption in Theorem~$3.1$ is satisfied.

\newpage
\begin{appendices}
\section{The pseudo-paths topology and the Meyer-Zheng criteria} \label{sec:MeyerZheng}
All results presented in this section can be found in the seminal work of Meyer and Zheng~\cite{MZ84}, up to minor presentation variations.

\subsection{The pseudo-path topology}
Let $E$ be a Polish space. Two measurable paths $t \mapsto x_t \in E$ and $t \mapsto y_t \in E$ belong to the same equivalence class, called a \emph{pseudo-path}, if the set $\set{t: \, x_t =y_t}$ is of full Lebesgue measure (similarly as functions in $\L^p$).  Let us denote by $L^0(\e^{-t} \d t ,E)$ the set of all pseudo-paths. The \emph{pseudo-path topology} is the Polish topology on $L^0(\e^{-t} \d t ,E)$ induced by the closed (see~IV.43~\cite{dellacherie1975} for a proof) injection of $L^0(\e^{-t} \d t ,E)$ in $\Proba(E\times[0,+\infty[)$ endowed with the usual topology of convergence in law. This injection is naturally defined by mapping a pseudo-path $x$ to the probability measure of the form
\begin{equation}\label{eq:pseudo}
\delta_{x_t}(\d x) \e^{-t} \d t .
\end{equation}
In probabilistic terms, $x^n$ converges to $x$ for the pseudo-path topology if and only if the pair $(x^n_T,T)$ converges in distribution towards $(x_T,T)$, where $T$ is exponentially distributed.

By a simple continuity argument, the push-forward~\eqref{eq:pseudo} from paths to pseudo-paths is injective on the set $\D(\R_+,E)$ of c\`adl\`ag paths, so that there also exists a natural injection $\D(\R_+,E) \subset L^0(\e^{-t} \d t ,E)$. It is important to keep in mind that the set of pseudo-paths with a c\`adl\`ag representative is a Borel subset (it is in fact $G_\delta$) but it is \emph{not} closed in the space of pseudo-paths. As a consequence if a sequence of c\`adl\`ag paths converges for the pseudo-paths topology, the limit may or may not be c\`adl\`ag.

There is another natural way to define a Polish topology on the space of pseudo-paths $L^0(\e^{-t} \d t ,E)$ by considering convergence in measure (equivalent here to convergence in probability). This topology can be defined with a probabilistic perspective as follows. A sequence of pseudo-paths $x^n$ converges in probability (or in measure) towards $x$ if and only if
$ x^n_T$ converges in probability towards $x_T$ were $T$ is exponentially distributed. Convergence in probability on $L^0(\e^{-t} \d t ,E)$ is then a Polish topology that can classically by metrized using the $\L^1$-type Ky Fan complete metric (see~\cite[Section~$9.2$]{dudley2018real})
\begin{equation}\label{eq:Ky Fan}
\int_0^\infty \min(d(x_t,y_t),1) \e^{-t} \d t = 0
\end{equation}
where $d$ is complete and metrizing $E$. The following lemma is a slight generalization of Lemma~$1$ in~\cite{MZ84} which is proved for real valued processes. It shows that the above two Polish topologies on pseudo-paths are topologically equivalent (although not metrically).
\begin{Lem}\label{lem:conv_meas}Let $E$ denote a Polish space. On pseudo-paths space $L^0(\e^{-t} \d t ,E)$, the Polish topology of convergence in probability induced by~\eqref{eq:Ky Fan} is equivalent to the Polish topology of convergence in distribution induced by~\eqref{eq:pseudo}. 
\end{Lem}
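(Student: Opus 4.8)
The plan is to establish the two topological inclusions separately; since both topologies on $L^0(\e^{-t}\d t,E)$ are metrizable, it suffices in each case to check sequential continuity of the identity map. The direction ``convergence in probability implies convergence in distribution'' I would handle by a routine subsequence argument: if $\int_0^\infty \min(d(x^n_t,x_t),1)\,\e^{-t}\,\d t\to 0$, then from any subsequence one extracts a further subsequence along which $x^{n_k}_t\to x_t$ for Lebesgue-almost every $t$; continuity of $y\mapsto\varphi(y,t)$ gives $\varphi(x^{n_k}_t,t)\to\varphi(x_t,t)$ a.e.\ for every bounded continuous $\varphi$ on $E\times\R_+$, and boundedness of $\varphi$ together with dominated convergence yields $\int\varphi(x^{n_k}_t,t)\,\e^{-t}\,\d t\to\int\varphi(x_t,t)\,\e^{-t}\,\d t$. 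As the subsequence was arbitrary the full sequence converges, which is exactly weak convergence of the pseudo-path measures~\eqref{eq:pseudo}. (Equivalently: $x^n_T\to x_T$ in probability for $T$ exponential, hence in law.)

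The substantial direction is the converse: weak convergence of $\delta_{x^n_t}(\d y)\,\e^{-t}\,\d t$ towards $\delta_{x_t}(\d y)\,\e^{-t}\,\d t$ implies $x^n\to x$ in measure. Following the strategy of Meyer--Zheng, I would pass to the ``doubled'' pseudo-paths $(x^n,x)$ with values in $E\times E$ and study the measures $\mu_n\eqdef\delta_{(x^n_t,x_t)}(\d y\,\d z)\,\e^{-t}\,\d t\in\Proba(E\times E\times\R_+)$. First I would check that $(\mu_n)_n$ is tight: its $(y,t)$-marginal is $\delta_{x^n_t}(\d y)\,\e^{-t}\,\d t$, which converges weakly and is therefore tight by Prokhorov's theorem, so $\sup_n\int_0^\infty\one_{x^n_t\notin K_1}\,\e^{-t}\,\d t$ is small for a suitable compact $K_1$; its $(z,t)$-marginal is the single fixed probability measure $\delta_{x_t}(\d z)\,\e^{-t}\,\d t$ on the Polish space $E\times\R_+$, hence automatically tight; covering $E\times E\times\R_+$ by products of the resulting compacts (together with $[0,T]$) gives tightness of $(\mu_n)$. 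Extracting a weakly convergent subsequence and using that the set of pseudo-paths of $E\times E$ is closed in $\Proba(E\times E\times\R_+)$, the limit is again a pseudo-path $\delta_{(z^1_t,z^2_t)}(\d y\,\d z)\,\e^{-t}\,\d t$. Projecting on the $(y,t)$- and $(z,t)$-coordinates (weakly continuous operations) identifies both its marginals with $\delta_{x_t}(\cdot)\,\e^{-t}\,\d t$; since the embedding $x\mapsto\delta_{x_t}(\d y)\,\e^{-t}\,\d t$ is injective on pseudo-paths, this forces $z^1=z^2=x$ as pseudo-paths. Every subsequential limit therefore equals $\delta_{(x_t,x_t)}(\cdot)\,\e^{-t}\,\d t$, so the whole doubled sequence converges weakly to it. Testing against the bounded continuous map $(y,z,t)\mapsto\min(d(y,z),1)$ then gives $\int_0^\infty\min(d(x^n_t,x_t),1)\,\e^{-t}\,\d t\to\int_0^\infty\min(d(x_t,x_t),1)\,\e^{-t}\,\d t=0$, i.e.\ convergence in measure.

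The main obstacle is the tightness of the doubled family, and specifically the tightness of the first component: this is precisely the point at which one invokes Prokhorov to turn weak convergence of the $E$-marginals of the $x^n$ into a uniform-in-$n$ compact containment bound; everything after that is soft. I would also note that no hypothesis on $E$ beyond ``Polish'' is used, so the passage from the real-line statement \cite[Lemma~1]{MZ84} to an arbitrary Polish space is painless once the doubling trick and the closedness of pseudo-path space are in place.
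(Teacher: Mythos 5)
Your proof is correct, but the hard direction is argued quite differently from the paper. For ``convergence in distribution implies convergence in measure'', the paper follows Meyer--Zheng's original Lemma~1: it embeds $E$ homeomorphically into the Hilbert cube via a countable family $\phi_k(\cdot)=d(\cdot,x_k)$, views pseudo-paths as elements of the unit ball of $L^2(\e^{-t}\d t,\ell^2_w)$, observes that pseudo-path convergence gives both weak convergence and convergence of the $L^2$-norms there, and concludes strong $L^2$ convergence, hence convergence in measure. You instead use a doubling/coupling argument: form the joint pseudo-paths $(x^n,x)$ in $E\times E$, get tightness of the coupled measures from Prokhorov applied to the marginals, use the closedness of the pseudo-path space of $E\times E$ inside $\Proba(E\times E\times\R_+)$ to see that any subsequential limit is again a (deterministic) pair of paths, identify it through its marginals as the diagonal pseudo-path $(x,x)$, and finish by testing against $(y,z,t)\mapsto\min(d(y,z),1)$. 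The crucial point that makes your marginal identification legitimate is precisely the closedness of the pseudo-path space for the product state space (Dellacherie--Meyer IV.43, which the paper cites for general Polish spaces), since a general coupling of two copies of the same law need not sit on the diagonal; because the limit is forced to be an honest pair of paths, the marginals do determine it. What each route buys: yours is softer, avoids the Hilbert-space embedding entirely, and makes transparent that nothing beyond Polishness of $E$ is used, at the price of leaning on the cited closedness result in the doubled space and on a tightness extraction; the paper's route is a direct adaptation of the real-valued Meyer--Zheng argument, self-contained modulo elementary Hilbert-space facts (weak convergence plus norm convergence implies strong convergence), and does not need closedness of the pseudo-path space at this point of the development. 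The easy direction is essentially the same in both (your subsequence/dominated-convergence phrasing and the paper's $(x^n_T,T)$ formulation with $T$ exponential are interchangeable).
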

\begin{proof} Let us prove that convergence in probability implies convergence as distributions on $\R_+\times E$. Let $T$ be an exponentially distributed random variable. By definition, if $x^n_T$ converges in probability towards $x_T$ so is the pair $(x^N_T,T)$ towards $(x_T,T)$. This implies convergence in distribution in $\Proba(E\times[0,+\infty[)$.

Conversely, let us prove that convergence $x^n \to x$ as  distributions on $\R_+\times E$ implies convergence in probability. The following argument is a minor adaptation of the proof of Lemma~$1$ of~\cite{MZ84} which treats the case $E = \R$.

Classically, we can (homeomorphically) embed $E$ in the unit ball of a separable Hilbert space. For instance, $E$ can be embed in the Hilbert cube $\phi : E \to [0,1]^{\N}$ by considering the countable family of bounded continuous test functions $\phi_k(x) = d(x,x_k)$ where $(x_k)_{k \in \N}$ is a dense subset of $E$, and $d \leq 1$ is a bounded distance metrizing $E$; indeed, a sequence converges in $E$ towards a given limit in $E$ if and only if its images by $\phi_k$ converges for each $k \geq 0$. As a consequence, the weighted $\ell^2_w$ Hilbert space defined by the norm
\[
\widetilde d(x,y)^2  \eqdef \norm{\phi(x) - \phi(y)}^2_{\ell^2_w} \eqdef \sum_{k \geq 0} 2^{-k-1} \abs{\phi_k(x)-\phi_k(y)}^2
\]
also metrizes the Polish space $E$ although $\widetilde d$ may not be complete.

We can next consider the Hilbert space $L^2(\e^{-t} \d t,\ell^2_w)$ of measurable paths on $\R_+$ taking values in the separable Hilbert $\ell^2_w$. By a routine finite dimensional approximation argument (one can consider increasing finite-dimensional sub-spaces $\ell_d \subset \ell^2_w$ with $l_d \nearrow_d \ell^2_w$), the space of bounded continuous functions from $\R_+$ to $\ell^2_w$ is dense in $L^2(\e^{-t} \d t,\ell^2_w)$. As a consequence, a converging sequence of pseudo-paths $x^n \to x$ (as distributions of $\Proba(E\times[0,+\infty[)$) also converges weakly (by definition, and since $x^n$ and $x$ lie in the unit ball of $\ell^2_w$) in $L^2(\e^{-t} \d t,\ell^2_w)$, as can be seen by considering the convergence of the scalar product $\int_0^\infty \langle \phi(x^n_t) , \ph(t)  \rangle \e^{-t} \d t $ for any continuous and bounded path $\ph$ with values in $\ell^2_w$. In the same way, convergence as pseudo-paths implies that the Hilbert norm in $L^2(\e^{-t} \d t,\ell^2_w)$ of $x^n$ converges to the one of $x$. Hence, $x^n$ converges strongly to $x$ in $L^2(\e^{-t} \d t,\ell^2_w)$. Since the latter two lie in the unit ball, strong convergence in $L^2(\e^{-t} \d t,\ell^2_w)$ is equivalent to convergence in probability in $\ell^2_w$ or equivalently in $E$ since convergence in probability does not depend on the metric.
\end{proof}

Now, one can considers \emph{random} pseudo-paths defined as classes of equivalence of measurable maps $X:(\Omega,\P) \to L^0(\e^{-t} \d t,E)$ identified with almost sure equality. It is thus possible consider the set of random pseudo-paths as the space $L^0(\P,L^0(\e^{-t} \d t,E))$. The following lemma is somehow trivial, but nonetheless clarifying.
\begin{Lem} The metric
$$
\E \int_0^\infty \min(d(X_t,Y_t),1) \e^{-t} \d t
$$
is an isometry between $L^0(\P,L^0(\e^{-t} \d t,E))$ and $L^0(\P \otimes \e^{-t} \d t,E)$, when we metrize $L^0(\e^{-t} \d t,E)$ with~\eqref{eq:Ky Fan}. In particular, any random pseudo-path can be represented by a $\R_+\times \Omega$-measurable process $(t,\omega) \mapsto X(t,\omega)$.
\end{Lem}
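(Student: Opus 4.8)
The plan is to identify the two spaces through the obvious correspondence and to check it is an isometric bijection; the one substantive point will be the construction of a jointly measurable representative for an arbitrary random pseudo-path. First I would observe that \emph{both sides carry literally the same metric}. On $L^0(\P,L^0(\e^{-t}\d t,E))$ the canonical topology of convergence in probability is metrized, since the Ky Fan distance~\eqref{eq:Ky Fan} is already bounded by $1$, by $(X,Y)\mapsto \E\, d_{\mrm{KF}}(X(\cdot),Y(\cdot)) = \E\int_0^\infty \min(d(X_t,Y_t),1)\e^{-t}\d t$, the expectation making sense because $d_{\mrm{KF}}$ is continuous on $L^0(\e^{-t}\d t,E)^2$. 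On $L^0(\P\otimes\e^{-t}\d t,E)$ the topology of convergence in measure with respect to the probability measure $\P\otimes\e^{-t}\d t$ is metrized by $\int \min(d(X,Y),1)\,\d(\P\otimes\e^{-t}\d t)$, which by Tonelli's theorem equals the same expression. Hence it suffices to produce a bijection between the two spaces pairing the class of $\omega\mapsto[X(\omega)]$ with the class of any jointly (i.e.\ for the product $\sigma$-algebra on $\Omega\times\R_+$) measurable $(\omega,t)\mapsto X_t(\omega)$ representing it; it will then automatically be an isometry.

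The easy half is the map $\Psi$ sending a jointly measurable $(\omega,t)\mapsto Y_t(\omega)$ to $\omega\mapsto[Y_\cdot(\omega)]$. For any fixed measurable path $p$, the function $\omega\mapsto\int_0^\infty\min(d(Y_t(\omega),p_t),1)\e^{-t}\d t$ is measurable by Fubini, so preimages of open balls of $L^0(\e^{-t}\d t,E)$ under $\omega\mapsto[Y_\cdot(\omega)]$ are measurable; since $L^0(\e^{-t}\d t,E)$ is separable its Borel $\sigma$-algebra is generated by balls, so $\Psi$ is well-defined. It is injective by Fubini again: two jointly measurable processes whose pseudo-path classes agree for $\P$-a.e.\ $\omega$ differ only on a $\P\otimes\e^{-t}\d t$-null set.

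The heart of the matter, and the step I expect to be the main obstacle, is surjectivity of $\Psi$, i.e.\ that every measurable $X:\Omega\to L^0(\e^{-t}\d t,E)$ admits a jointly measurable representative; this is a measurable-selection/completeness argument rather than a formal manipulation. Using separability of $L^0(\e^{-t}\d t,E)$, I would partition it into countably many Borel pieces of $d_{\mrm{KF}}$-diameter $<1/n$ (built from a countable dense set) and let $X_n$ be the countably-valued measurable map that is constant on each piece, so that $d_{\mrm{KF}}(X_n(\omega),X(\omega))\le 1/n$ for every $\omega$. On each atom $\{X_n=p^{(n,k)}\}$ I fix a measurable path representative $t\mapsto p^{(n,k)}_t$ of the pseudo-path $p^{(n,k)}$ and set $\widetilde X_n(\omega,t)=p^{(n,k)}_t$ there; the result is jointly measurable, and the triangle inequality gives $\E\int_0^\infty\min(d(\widetilde X_n,\widetilde X_m),1)\e^{-t}\d t\le \tfrac1n+\tfrac1m$, so $(\widetilde X_n)_n$ is Cauchy in measure on $(\Omega\times\R_+,\P\otimes\e^{-t}\d t)$. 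Extracting a subsequence converging $\P\otimes\e^{-t}\d t$-almost everywhere (via Borel--Cantelli) yields a jointly measurable limit $\widetilde X$.

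It remains to verify that $\widetilde X$ represents $X$. By Fubini, for $\P$-a.e.\ $\omega$ the paths $\widetilde X_{n_j}(\omega,\cdot)$ converge $\e^{-t}\d t$-almost everywhere, hence in $L^0(\e^{-t}\d t,E)$, to $\widetilde X(\omega,\cdot)$; but by construction they also converge to $X(\omega)$ in $L^0(\e^{-t}\d t,E)$, so $\widetilde X(\omega,\cdot)$ is a representative of $X(\omega)$ for $\P$-a.e.\ $\omega$. This is exactly the ``in particular'' assertion of the lemma, and, combined with the previous paragraphs, it shows that $\Psi$ is a bijection preserving the common metric, i.e.\ the announced isometry.
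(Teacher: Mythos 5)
Your proof is correct, and it is in the same spirit as the paper's: both arguments rest on the Tonelli identification of the two metrics together with a density argument exploiting separability, approximating an arbitrary random pseudo-path by countably-valued objects that are automatically jointly measurable. The difference is where the discretization takes place. The paper takes a countable dense subset $C\subset E$ and observes that maps $(t,\omega)\mapsto X(t,\omega)\in C$ are trivially measurable and dense for convergence in probability, leaving the approximation and the passage to the limit implicit; you instead discretize the pseudo-path space $L^0(\e^{-t}\d t,E)$ itself, replacing $X(\omega)$ by a countably-(pseudo-path)-valued map, gluing fixed measurable representatives on the atoms, and then completing by a Cauchy-in-measure and Borel--Cantelli argument in $L^0(\P\otimes\e^{-t}\d t,E)$ (this is where completeness of $d$ on $E$ enters). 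Your variant makes the surjectivity step --- i.e. the ``in particular'' clause, which is the real content of the lemma --- fully explicit, together with the well-definedness and injectivity checks for the correspondence, at the cost of a longer write-up; the paper's discretization of $E$ is shorter but leaves precisely that verification to the reader.
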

\begin{proof}
Since $E$ is separable, there exists $C \subset E$ a countable dense subset of $E$. Random variables taking values in $C$ are dense for convergence in probability. The isometry is trivial for such random variables by considering all maps $(t,\omega) \mapsto X(t,\omega) \in C$ which are automatically measurable.
\end{proof}

\subsection{Convergence of finite-dimensional distributions}
Convergence in pseudo-path space of c\`adl\`ag processes does not imply convergence of finite dimensional distribution (as is already the case for the Skorokhod topology). However, up to extraction, one can obtain convergence of finite dimensional distributions for all times in a set of full Lebesgue measure. This was proved as Theorem~$5$ in~\cite{MZ84} for general real values processes. We sketch the proof again in the specific c\`adl\`ag case but for Polish-valued processes. 

\begin{Lem}\label{lem:finiteMZ} Let $E$ be Polish and let $X^n$ be a sequence of c\`adl\`ag processes converging in distribution,  for the pseudo-path topology, towards a c\`adl\`ag $X^\infty$ process. Then there exists a sub-sequence and a subset  $J \subset \R_+$ of full Lebesgue measure for which all finite-dimensional distributions converge.
\end{Lem}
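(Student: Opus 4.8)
The plan is to reduce the statement to an almost-sure one via the Skorokhod representation theorem on the Polish space $L^0(\e^{-t}\,\d t,E)$, then to turn almost-sure pseudo-path convergence into $L^1$-convergence of pointwise distances over $\R_+\times\Omega$, and finally to extract a subsequence along which pointwise convergence holds for Lebesgue-almost every fixed time.

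First I would record the elementary observation that for a c\`adl\`ag path the pointwise values are recovered from its pseudo-path class by the Lebesgue differentiation formula $r(x,t):=\lim_{h\to 0^+}h^{-1}\int_t^{t+h}x_s\,\d s$, which is a Borel map defined on the Borel set of pseudo-paths admitting a c\`adl\`ag representative. In particular, the law of a random pseudo-path concentrated on such representatives determines all finite-dimensional distributions of the associated c\`adl\`ag process; since $X^n$ and $X^\infty$ are c\`adl\`ag, it is enough to prove convergence of the laws of $(r(\cdot,t_1),\dots,r(\cdot,t_p))$.

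Then I would apply the Skorokhod representation theorem: on a common probability space one obtains random pseudo-paths $\widetilde X^n\overset{d}{=}X^n$ and $\widetilde X^\infty\overset{d}{=}X^\infty$ with $\widetilde X^n\to\widetilde X^\infty$ almost surely for the pseudo-path topology. By Lemma~\ref{lem:conv_meas} that topology is metrized by the Ky Fan metric, so almost surely $\int_0^\infty\min(d(\widetilde X^n_t,\widetilde X^\infty_t),1)\,\e^{-t}\,\d t\to 0$. Choosing jointly measurable representatives that are c\`adl\`ag in $t$ (possible because both laws are concentrated on pseudo-paths with c\`adl\`ag representatives), one has $r(\widetilde X^n,t)=\widetilde X^n_t$ for all $t$, almost surely. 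Taking expectations and using dominated convergence, $h_n(t):=\E[\min(d(\widetilde X^n_t,\widetilde X^\infty_t),1)]\to 0$ in $L^1(\R_+,\e^{-t}\,\d t)$; hence along a subsequence $(n_k)$ there is a set $J\subset\R_+$ of full Lebesgue measure with $h_{n_k}(t)\to 0$ for every $t\in J$, i.e. $\widetilde X^{n_k}_t\to\widetilde X^\infty_t$ in probability for each $t\in J$. For any $t_1,\dots,t_p\in J$ the joint vector $(\widetilde X^{n_k}_{t_1},\dots,\widetilde X^{n_k}_{t_p})$ then converges in probability, hence in distribution, to $(\widetilde X^\infty_{t_1},\dots,\widetilde X^\infty_{t_p})$, and translating back through $r$ and the equalities of laws yields convergence of the finite-dimensional distributions of $X^{n_k}$ at times in $J$.

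The one genuinely delicate point is that the evaluation map $x\mapsto x_t$ is \emph{not} continuous for the pseudo-path topology, so finite-dimensional convergence cannot be read off directly from pseudo-path convergence; this is precisely why the statement only gives a subsequence and a Lebesgue-null set of exceptional times, and why a uniform mean-variation (Meyer--Zheng) bound is what one would invoke to upgrade the conclusion to all times. The remaining work — checking that the chosen representatives can be taken jointly measurable and c\`adl\`ag in time so that $r(\widetilde X^n,t)=\widetilde X^n_t$, and that the pseudo-path law indeed pins down the finite-dimensional marginals — is routine bookkeeping.
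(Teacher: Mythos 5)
Your proposal is correct and follows essentially the same route as the paper: Skorokhod representation on pseudo-path space, the equivalence with convergence in probability (Lemma~\ref{lem:conv_meas} / the Ky Fan metric), dominated convergence to get convergence in $L^1(\e^{-t}\d t\otimes\P)$, and extraction of a subsequence giving convergence at Lebesgue-almost every fixed time. The only cosmetic difference is that you phrase the last step as convergence in probability of the time-marginals for each $t\in J$ (via Fubini on $h_n$), whereas the paper extracts almost-everywhere convergence on the product space; both yield the finite-dimensional convergence in the same way.
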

\begin{proof} By the Skorokhod representation theorem in Polish spaces, one can assume that $\widetilde X^n$ converges towards $\widetilde X^\infty$ almost surely, where $\widetilde X^n$ has the same pseudo-path distribution as $X_n$, for each $n\in \llbracket 1,+\infty\rrbracket$. Since by assumption $\P(\widetilde X_n \in \D(\R_+,E) \subset L^0(\e^{-t} \d t ,E))=1$, we can canonically consider $\widetilde X^n$ as a c\`adl\`ag process using its unique c\`adl\`ag representative. We thus subsequently drop the $\,\,  \widetilde {} \,\,$ notation.

By Lemma~\ref{lem:conv_meas}, almost sure convergence in pseudo-paths space implies almost sure convergence in probability. Taking the expectation and using dominated convergence one gets
\[
\int_0^\infty  \E \b{ \min(d( X^n_t, X^\infty_t),1) } \e^{-t} \d t \xrightarrow[n \to \infty]{} 0,
\]
that is convergence in probability but with respect to the product measure $\e^{-t} \d t \otimes \P$. As a consequence there exists a sub-sequence (we do not change notation to denote this sub-sequence) and a set $J \times \Omega$ of full $\e^{-t} \d t \otimes \P $ measure on which $ X^{n}$ converges towards $ X^{\infty}$. This implies convergence of finite dimensional distributions for all times in $J$.
\end{proof}

\subsection{Compact sub-spaces}
Compact subsets of the pseudo-paths space can be easily characterized using a direct application of  the Prokhorov theorem.
\begin{Lem} A subset $\calK$ of pseudo-paths is compact if and only if for any $\eps > 0$, there exists $t_\eps >0$ and $K_\eps \subset E$ compact such that
$$
\inf_{x \in \calK} \int_0^{t_\eps} \one_{d(x_t,K_\eps) \leq \eps} \, \e^{-t} \d t \geq 1 - \eps ;
$$
in particular, if for all $T >0$, all the pseudo-paths of $\calK$ are taking on $[0,T]$ their values in a compact $K_T \subset E$, then the pseudo-paths space $\calK$ is compact.
\end{Lem}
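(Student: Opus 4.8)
The plan is to derive this from the Prokhorov theorem applied to $\Proba(E\times\R_+)$. Recall from the definition of the pseudo-path topology that $L^0(\e^{-t}\d t,E)$ embeds as a \emph{closed} subset of $\Proba(E\times\R_+)$ via $x\mapsto\mu_x:=\delta_{x_t}(\d x)\,\e^{-t}\d t$, and that $E\times\R_+$ is Polish. Hence $\calK$ is (relatively) compact in pseudo-path space if and only if $\set{\mu_x:x\in\calK}$ is relatively compact in $\Proba(E\times\R_+)$, which by Prokhorov's theorem is equivalent to tightness: for every $\delta>0$ there is a compact $L\subset E\times\R_+$ with $\mu_x(L)\ge1-\delta$ for all $x\in\calK$. (Here ``compact'' should be read as ``relatively compact'', genuine compactness following if $\calK$ is moreover closed; all the integrals below depend on a path only through its a.e.\ equivalence class, so they are well defined on pseudo-paths.) It therefore suffices to show that the displayed condition is equivalent to tightness of $\set{\mu_x}$.

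Necessity is immediate: a compact $L\subset E\times\R_+$ is contained in some $K'\times[0,T]$ with $K'\subset E$ compact and $T<\infty$, so $\mu_x(L)\ge1-\eps$ yields $\int_0^T\one_{x_t\in K'}\,\e^{-t}\d t\ge1-\eps$, and since $\one_{x_t\in K'}\le\one_{d(x_t,K')\le\eps}$ the displayed condition holds with $t_\eps=T$ and $K_\eps=K'$.

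For sufficiency --- the part that actually requires work --- I would first reformulate the hypothesis as a tail estimate: since $\int_0^{t_\eps}\one_{d(x_t,K_\eps)\le\eps}\,\e^{-t}\d t\ge1-\eps$ while $\int_0^{t_\eps}\e^{-t}\d t=1-\e^{-t_\eps}$, one reads off $\e^{-t_\eps}\le\eps$, and then, splitting the integral over $[0,t_\eps]$ and $(t_\eps,+\infty)$, one obtains $\sup_{x\in\calK}\int_0^{+\infty}\one_{d(x_t,K_\eps)>\eps}\,\e^{-t}\d t\le\eps$. Now fix $\delta>0$, set $\eps_n:=\delta\,2^{-n-1}$, and let $K_n$ be the corresponding compacts, so that $\sup_{x\in\calK}\int_0^{+\infty}\one_{d(x_t,K_n)>\eps_n}\,\e^{-t}\d t\le\eps_n$. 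Put $C:=\bigcap_{n\ge1}\set{y\in E:d(y,K_n)\le\eps_n}$. This set is closed, and it is totally bounded because for every $\rho>0$ one can pick $n$ with $\eps_n<\rho$ and cover $K_n$ by finitely many $\rho$-balls, whence the enlargement $\set{d(\cdot,K_n)\le\eps_n}\supset C$ is covered by finitely many balls of radius $2\rho$; as $E$ is complete, $C$ is compact. Choosing $T$ with $\e^{-T}\le\delta/2$ and setting $L:=C\times[0,T]$, compact in $E\times\R_+$, we estimate $\mu_x(L^c)\le\int_0^{+\infty}\one_{x_t\notin C}\,\e^{-t}\d t+\int_T^{+\infty}\e^{-t}\d t$; since $\set{t:x_t\notin C}=\bigcup_n\set{t:d(x_t,K_n)>\eps_n}$, the first term is $\le\sum_n\eps_n=\delta/2$ and the second is $\le\delta/2$, so $\mu_x(L^c)\le\delta$ uniformly in $x\in\calK$. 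This is tightness, completing the equivalence.

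The ``in particular'' clause is then immediate: if for every $T$ all pseudo-paths of $\calK$ lie Lebesgue-a.e.\ in a compact $K_T$ on $[0,T]$, then $\int_0^T\one_{x_t\in K_T}\,\e^{-t}\d t=1-\e^{-T}$, so given $\eps$ one takes $T$ with $\e^{-T}\le\eps$ and puts $t_\eps=T$, $K_\eps=K_T$. I expect the only genuinely delicate point to be the compactness of $C$: in a general metric space the closed $\eps$-neighbourhood of a compact set need not be compact, which is exactly why the hypothesis is phrased with $\eps$-enlargements and why one must intersect over $\eps_n\downarrow0$ and use completeness of $E$; the rest is bookkeeping with the Prokhorov theorem.
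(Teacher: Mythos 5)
Your proof is correct and follows exactly the route the paper intends: the lemma is stated as a ``direct application of the Prokhorov theorem'' via the closed embedding $x \mapsto \delta_{x_t}(\d x)\,\e^{-t}\d t$ into $\Proba(E\times\R_+)$, which is precisely your argument. You merely supply the details the paper leaves implicit — in particular the construction of the compact set $C=\bigcap_n\set{d(\cdot,K_n)\le\eps_n}$ by total boundedness and completeness in the sufficiency direction — and you rightly note that ``compact'' must be read as ``relatively compact'' (genuine compactness needing closedness of $\calK$), which matches the paper's informal phrasing.
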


\subsection{The Meyer-Zheng criterion}
We now turn to the case where $E=\R$. The idea is to consider the concept of \emph{mean variation} on pseudo-path distributions. A c\`adl\`ag random process with finite mean variations characterizes (integrable) c\`adl\`ag quasi-martingales. 
\begin{Def}[Mean variation] Let $T > 0$ denotes an horizon time, and $(X_t)_{t \geq 0}$ a c\`adl\`ag real valued random process with $Z_t \in L^1$ for all $t$. The mean variation of $Z$ over $[0,T]$ with respect to its own natural filtration is given by
\begin{align*}
V_T(Z) \eqdef & \sup_{0=t_0\leq t_1 \ldots \leq t_K \leq T} \sum_{k=0}^K \E \abs{\E\b{ Z_{t_{k+1}}-Z_{t_k} \mid \sigma\p{ Z_{t}, \, t \leq t_k }} }, \\
= & \underset{\abs{H}\leq 1}{\sup}\mathbb{E} \left( \int_0^{T} H_t \, \d Z_t \right),
\end{align*}
where in the above the supremum is taken over predictable processes (with respect to the natural filtration of $Z$) taking value in $[-1,1]$.
\end{Def}
As noted in Remark~$1$ p.~$362$ of~\cite{MZ84} the mean variations is increasing with the considered filtration, so that proving a uniform bound for a larger filtration immediately yields the same bound for the natural filtration of the considered process.

The main theorem of Meyer-Zheng (Theorem~$4$) can be presented as follows.

\begin{The}[Meyer-Zheng cirteria] \label{the:cadlagMZ}
Let $(\pi_n)_{n \geq 1}$ denotes a sequence of distributions of real valued c\`adl\`ag processes. Assume that this sequence is converging for the pseudo-path topology towards a distribution in pseudo-path space $\pi$. If for all $T >0$ the mean variation is uniformly bounded  $\limsup_n V_T(\pi_n) < +\infty$, then the pseudo-path limit is c\`adl\`ag, that is $\pi_\infty$  fully charges $\D(\R_+,\R) \subset L^0(\e^{-t} \d t  ,\R)$, and it has bounded mean variation: $V_T(\pi) \leq \limsup_n V_T(\pi_n) < +\infty$ for all $T$.
\end{The}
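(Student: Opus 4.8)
The plan is to follow the classical route of Meyer and Zheng, reducing the whole statement to the theory of nonnegative supermartingales. First I would recall Rao's decomposition of quasi-martingales: a real-valued c\`adl\`ag process $X$ on $[0,T]$ with $V_T(X)<+\infty$ (relative to its own natural filtration, which by the remark preceding the theorem only makes $V_T$ smaller, and assuming $X_t\in\L^1$) can be written $X_t = P_t - Q_t$ with $P,Q$ nonnegative c\`adl\`ag supermartingales whose initial expectations $\E[P_0],\E[Q_0]$ are controlled by $V_T(X)$ and $\E|X_0|$. Applying this to each $X^n$ on each integer horizon and using the hypothesis $\limsup_n V_T(\pi_n)<\infty$ for all $T$, I extract (diagonally over $T\in\N$) a subsequence along which $(X^n,P^n,Q^n)$ has, for every $T$, uniformly bounded mean variations and uniformly (in $n$ and $t\le T$) bounded first moments $\E[P^n_t],\E[Q^n_t]$. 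Since the whole sequence $\pi_n$ converges to $\pi$ in pseudo-path topology, any conclusion proved about subsequential limits applies to $\pi$ itself.

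Next, tightness and extraction in pseudo-path space. By Doob's maximal inequality for nonnegative supermartingales, $\P(\exists t\le n:\ P^n_t>\lambda)\le \E[P^n_0]/\lambda$ (and likewise for $Q^n$), so choosing compacts $K_m=[-\lambda_m,\lambda_m]$ with $\lambda_m$ large enough yields, via Lemma~\ref{lem:comp}, a compact-containment estimate; hence the triple of pseudo-paths $(X^n,P^n,Q^n)$ is tight in $L^0(\e^{-t}\d t,\R^3)$. Passing to a further subsequence, $(X^n,P^n,Q^n)\to(X^\infty,P^\infty,Q^\infty)$ in law with $X^\infty=P^\infty-Q^\infty$ holding $\e^{-t}\d t\otimes\P$-a.e., and by Lemma~\ref{lem:finiteMZ} there is a full-Lebesgue-measure set $J\subset\R_+$ and a further subsequence along which all finite-dimensional distributions at times in $J$ converge. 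Fixing $s<t$ in $J$ and a bounded continuous $g$ of finitely many path-coordinates at times in $J\cap[0,s]$, passing to the limit in $\E[(P^n_t-P^n_s)g]\le 0$ shows that $P^\infty$, restricted to $J$, is a nonnegative supermartingale for its own filtration along $J$. By Doob's upcrossing inequality together with $\sup_{s\in J}\E[P^\infty_s]<\infty$, such a process has a.s. finitely many upcrossings of every rational interval and one-sided limits along $J$, hence admits a c\`adl\`ag modification $\overline P^\infty_t \eqdef \lim_{J\ni s\downarrow t}P^\infty_s$; similarly for $Q^\infty$. Then $\overline X^\infty\eqdef\overline P^\infty-\overline Q^\infty$ is c\`adl\`ag and agrees with $X^\infty$ on a set of full $\e^{-t}\d t\otimes\P$ measure, so it is a c\`adl\`ag representative of the pseudo-path $X^\infty$; therefore $\pi$ charges $\D(\R_+,\R)$.

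For the mean-variation bound, I use that for a c\`adl\`ag process the supremum defining $V_T$ may be restricted to partitions with all nodes in any fixed dense set, in particular in $J$. For such a partition $0=t_0<\dots<t_K\le T$, each term $\E\,\big|\E[\overline X^\infty_{t_{k+1}}-\overline X^\infty_{t_k}\mid \F^{X^\infty}_{t_k}]\big|$ equals $\sup\,\E[(\overline X^\infty_{t_{k+1}}-\overline X^\infty_{t_k})\,g]$ over $\F^{X^\infty}_{t_k}$-measurable $g$ with $|g|\le 1$, and by a monotone-class argument this supremum is approached along bounded continuous functionals of finitely many coordinates at times in $J\cap[0,t_k]$; finite-dimensional convergence along $J$ then gives $\sum_k\E\,\big|\E[\overline X^\infty_{t_{k+1}}-\overline X^\infty_{t_k}\mid\F^{X^\infty}_{t_k}]\big|\le\liminf_n V_T(\pi_n)\le\limsup_n V_T(\pi_n)$, and taking the supremum over partitions yields $V_T(\pi)\le\limsup_n V_T(\pi_n)$.

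The main obstacle is the passage to the limit in the (super)martingale and mean-variation inequalities: conditional expectations with respect to the limiting natural filtration are not continuous under mere pseudo-path convergence, so the conditioning $\sigma$-algebras must be systematically replaced by, and the approximation errors controlled through, bounded continuous functionals of finitely many coordinates at the "good" times of $J$; and one must simultaneously guarantee that the upcrossing control needed to reconstruct the c\`adl\`ag modification survives in the limit, which is precisely what the uniform bound on mean variations, routed through Rao's decomposition into bounded nonnegative supermartingales, provides.
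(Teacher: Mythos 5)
Your overall route (Rao's decomposition of a quasi-martingale into a difference of nonnegative c\`adl\`ag supermartingales, tightness, finite-dimensional convergence along a full-measure set of times $J$, supermartingale limits plus upcrossing control to rebuild a c\`adl\`ag representative, then lower semicontinuity of the mean variation through continuous functionals of finitely many coordinates at times of $J$) is exactly the argument of Theorem~4 of~\cite{MZ84}; the paper does not reproduce it but simply stops the processes at $T$ and invokes that theorem and its Remark~2. So you are re-deriving the cited result along its original lines rather than giving a different proof, and most of your outline is faithful to it.

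There is, however, a concrete gap at the very first step, and it propagates through everything that follows: you claim to ``extract a subsequence along which $(X^n,P^n,Q^n)$ has \dots uniformly bounded first moments $\E[P^n_t],\E[Q^n_t]$''. Rao's decomposition controls $\E[P^n_0]+\E[Q^n_0]$ by the conditional variation \emph{plus} a terminal moment (of the order $V_T(X^n)+\E|X^n_T|$), and the hypotheses of the statement give no control at all on $\E|X^n_t|$: the mean variation $V_T$ as defined here has no terminal term (a martingale has $V_T=0$ whatever its moments), pseudo-path convergence carries no uniform integrability, and boundedness cannot be created by passing to a subsequence. Yet every later step of your argument uses this bound — Doob's maximal inequality for the tightness of $(P^n,Q^n)$, the passage to the limit in $\E[(P^n_t-P^n_s)g]\le 0$ and in the variation sums (where, even with bounded first moments, one needs uniform integrability; Meyer--Zheng circumvent this by first truncating with bounded concave functions, which preserve the supermartingale property — a step your sketch omits), and the upcrossing bound via $\sup_{s\in J}\E[P^\infty_s]<\infty$. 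Nor is this a removable technicality of your proof: c\`adl\`ag martingales which follow the deterministic curve $t\mapsto (1-t)^{-1}$ up to time $1-1/n$, the martingale property being maintained by rare very large jumps, have $V_T\equiv 0$ and converge in pseudo-path law to the deterministic pseudo-path $t\mapsto (1-t)^{-1}\one_{t<1}$, which admits no c\`adl\`ag representative. Some uniform moment control — built into Meyer--Zheng's conditional variation through its terminal term $\E|X_T|$, and automatic in the paper's application where the processes $\Pa\ph_i(X^n)$ are uniformly bounded — must therefore be added to the hypotheses before your argument (or any argument) can close; as written, the extraction step is unjustified and the proof does not go through.
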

\begin{proof}
This is a direct adaptation of Theorem $4$ in~\cite{MZ84}.

Let $T>0$ by given and consider the distribution $(\pi^T_n)_{n \geq 1}$ and $\pi^T$ of pseudo-paths stopped at $T$ (equal to their value at $T$ for all $t \geq T$). The sequence $(\pi^T_n)_{n \geq 1}$ again converges towards $\pi^T$. Using the proof of Theorem $4$ in~\cite{MZ84} after the preliminary extraction step (which is superfluous here), we get that $\pi^T$ has support in $\D$. Using Remark~$2$ after the proof, we also obtain that $V_T(\pi) \leq \limsup_n V_T(\pi_n) < +\infty$.

Finally, one can naively remark that a pseudo-path $x$ has a c\`adl\`ag representative if and only if the stopped pseudo-path $x^T$ has a c\`adl\`ag representative for all integer $T$:
\[
x \in \D(\R_+,E) \subset L^0(\e^{-t} \d t ,E) \Leftrightarrow x^T \in \D(\R_+,E)\subset L^0(\e^{-t} \d t ,E), \, \, \forall T \in \N,
\]
by identifying the unique c\`adl\`ag representative on each interval $[T,T+1]$. By $\sigma$-additivity, it follows that $\pi$ fully charges $\D$.
\end{proof}


\subsection{Characterization of c\`adl\`ag martingales}

We will also use in this article the following characterization of the martingale property on c\`adl\`ag processes: 
\begin{The} \label{martingale_criteria}
A bounded c\`adl\`ag process $\mathcal{M}$ is a martingale for the filtration of $(\mathcal{F}_t)_{t\geq 0}$ if and only if:
$$\mathbb{E}[ (\mathcal{M}_t-\mathcal{M}_{t_k})\varphi_k(X_{t_k}) ... \varphi_1(X_{t_1}) ] =  0$$
for all $t_1 < ... < t_k < t$ in a dense subset, $\varphi_1, ... , \varphi_k$ continuous and bounded and for all $k$.
\end{The}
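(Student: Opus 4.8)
The plan is to prove the two implications separately: the forward one is immediate, and the converse rests on a right-continuity argument to pass from the dense set of times to all times, followed by a functional monotone class argument.

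For the forward implication, if $\mathcal{M}$ is a martingale then for any $t_1 < \dots < t_k < t$ the random variable $\varphi_k(X_{t_k})\cdots\varphi_1(X_{t_1})$ is bounded and $\mathcal{F}_{t_k}$-measurable, so
\[
\E\b{(\mathcal{M}_t-\mathcal{M}_{t_k})\varphi_k(X_{t_k})\cdots\varphi_1(X_{t_1})} = \E\b{\varphi_k(X_{t_k})\cdots\varphi_1(X_{t_1})\,\E\b{\mathcal{M}_t-\mathcal{M}_{t_k}\mid\mathcal{F}_{t_k}}} = 0 ,
\]
and this holds in particular for times in the given dense subset.

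For the converse, I would first upgrade the hypothesis from the dense subset $D$ to all times. Fix $0\le t_1 < \dots < t_k < t$ and choose, using density of $D$, points $t_{i,n}\in D$ with $t_{i,n}\downarrow t_i$ and $t_n\in D$ with $t_n\downarrow t$ such that $t_{1,n} < \dots < t_{k,n} < t_n$ for all large $n$. Since $\mathcal{M}$ and $X$ have right-continuous paths, $\mathcal{M}_{t_n}\to\mathcal{M}_t$, $\mathcal{M}_{t_{k,n}}\to\mathcal{M}_{t_k}$ and $\varphi_i(X_{t_{i,n}})\to\varphi_i(X_{t_i})$ almost surely; all these quantities being uniformly bounded (boundedness of $\mathcal{M}$ and of the $\varphi_i$), dominated convergence gives
\[
\E\b{(\mathcal{M}_t-\mathcal{M}_{t_k})\varphi_k(X_{t_k})\cdots\varphi_1(X_{t_1})}=0
\]
for \emph{all} $0\le t_1 < \dots < t_k < t$ and all bounded continuous $\varphi_i$. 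Then I would fix $s<t$ and consider the class $\mathcal{C}_s$ of random variables $\varphi_k(X_{t_k})\cdots\varphi_1(X_{t_1})$ with $0\le t_1 < \dots < t_k = s$ and $\varphi_i\in C_b(E)$: it is stable under products (merge the time grids and multiply the functions at coinciding times), contains the constant $1$, and — taking some $\varphi_i\equiv 1$ — contains $\varphi(X_u)$ for every $u\le s$ and every $\varphi\in C_b(E)$, hence generates $\mathcal{F}_s=\sigma(X_u:u\le s)$ since $E$ is Polish. The displayed identity (applied with largest time $t_k=s$) shows $\E\b{(\mathcal{M}_t-\mathcal{M}_s)G}=0$ for all $G\in\mathcal{C}_s$; the set of bounded $G$ with this property is a vector space containing the constants and stable under bounded monotone limits (dominated convergence, $\mathcal{M}$ being bounded), so the functional monotone class theorem extends it to all bounded $\mathcal{F}_s$-measurable $G$. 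This is precisely $\E\b{\mathcal{M}_t\mid\mathcal{F}_s}=\mathcal{M}_s$, so $\mathcal{M}$ is a martingale.

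The only delicate point — the ``main obstacle'', such as it is — is the time-approximation step: one must approach each time from the \emph{right}, so as to land on $\mathcal{M}_t$ and $\mathcal{F}_s$ rather than on their left limits, which is exactly where the c\`adl\`ag regularity of the paths is used; after that, boundedness of $\mathcal{M}$ turns every limit exchange into a routine dominated-convergence argument, and the rest is the standard monotone class machinery.
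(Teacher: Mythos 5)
Your proof is correct; the paper actually states Theorem~\ref{martingale_criteria} without proof, treating it as a routine characterization, and your argument --- approximating arbitrary times from the right by points of the dense set (using the c\`adl\`ag paths and boundedness to pass to the limit by dominated convergence), then a multiplicative-class/monotone-class argument to go from products $\varphi_k(X_{t_k})\cdots\varphi_1(X_{t_1})$ to all bounded $\mathcal{F}_s$-measurable test variables --- is exactly the standard argument the paper implicitly relies on. The only implicit points are those already implicit in the statement itself, namely that $(\mathcal{F}_t)$ is the natural filtration of the c\`adl\`ag process $X$ and that $\mathcal{M}$ is adapted to it, which is what lets you conclude $\mathbb{E}[\mathcal{M}_t\mid\mathcal{F}_s]=\mathbb{E}[\mathcal{M}_s\mid\mathcal{F}_s]=\mathcal{M}_s$ at the end.
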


It is then possible to use this property to prove that the limit of a sequence of martingales is again a martingale. Typically, the considered sequence of martingales is constructed from a sequence of c\`adl\`ag Markov processes that converge to a c\`adl\`ag process \emph{for the pseudo-path topology}; and the convergence of almost all finite dimensional distributions is sufficient   to a martingale (closure property). This will be carried out in Step~$3$ of Section~\ref{sec:proofs} while proving the main theorem.

\section{Calculus associated with the diffusion in a strip}\label{sec:calc_band}

\subsection{Regularity of $\Pa \varphi$}
$\Pa \varphi$ is given by the following explicit expressions of $\mathbb{P}_y(\tau\in \mathrm{d}t ; W_{\tau}= 1 )$ and $\mathbb{P}_y(\tau\in \mathrm{d}t ; W_{\tau}= 1 )$ based on theta functions (\cite[Section~$3.0.6$ and Section~$11$, App.~$2$, p.641]{borodin2015handbook}):

\begin{equation} \label{expansian}
 \mathbb{P}_y(\tau\in \mathrm{d}t ; W_{\tau}= 1 ) = \sum_{k= - \infty}^{+\infty} \frac{4k+1 - y}{\sqrt{2\pi} t^{3/2}} \e^{ - (4k+1 - y)^2/(2t)},
\end{equation}
and:
\begin{equation}\label{expansian2} \mathbb{P}_y(\tau\in \mathrm{d}t ; W_{\tau}=-1 ) = \sum_{k= - \infty}^{+\infty} \frac{4k+1+y}{\sqrt{2\pi} t^{3/2}} \e^{ - (4k+1+y)^2/(2t)} . \end{equation}

For the sake of simplicity, we will mainly consider \eqref{expansian}, the computations with \eqref{expansian2} being identical by symmetry. We claim that if $\ph \in C_c^\infty(\R)$, then $y \mapsto \int_{0}^\infty \ph(t) \mathbb{P}_y(\tau\in \mathrm{d}t ; W_{\tau}= 1 )$ is smooth on $[-1,1]$. The terms in~\eqref{expansian} for $k \neq 0$ are easy to handle by routine dominated convergence. For $k=0$, we remark that a difficulty arises for $y=1$, but that the associated term is of the form $g(1-y)$ where $g$ is the following smooth function:
\begin{Lem} Let $\ph$ be smooth with compact support. The function on $\R_+$ defined by 
$$
g(u) \eqdef \int_0^{\infty} \e^{-u^2/t} u \, t^{-\frac{3}{2}} \ph(t) \d t
$$
is smooth on $\R_+$.
\end{Lem}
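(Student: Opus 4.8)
The plan is to reduce $g$ to an integral in which the regularizing factor no longer depends on $u$, via the substitution $t = u^2 s$ (a diffeomorphism of $(0,+\infty)$ for each fixed $u>0$). This gives, for $u>0$,
\[
g(u) \;=\; \int_0^{+\infty} \e^{-1/s}\, s^{-3/2}\, \varphi(u^2 s)\,\d s \;=\; F(u^2), \qquad F(w) \eqdef \int_0^{+\infty} \e^{-1/s}\, s^{-3/2}\, \varphi(w s)\,\d s .
\]
Since $u \mapsto u^2$ is smooth, it then suffices to prove $F \in C^\infty((0,+\infty))$ and compose; one checks $\int_0^{+\infty} \e^{-1/s} s^{-3/2}\,\d s = \sqrt{\pi}$ (set $r=1/s$), so $F$ is well defined, and in fact $F(0) = \sqrt{\pi}\,\varphi(0)$, matching $g(0^+)$.

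To prove $F \in C^\infty((0,+\infty))$ I would differentiate under the integral sign. Formally $F^{(k)}(w) = \int_0^{+\infty} \e^{-1/s}\, s^{k-3/2}\, \varphi^{(k)}(w s)\,\d s$. Fix a compact interval $[a,b] \subset (0,+\infty)$ and let $M$ be such that $\supp \varphi^{(k)} \subset [-M,M]$ for all relevant $k$ (possible since $\varphi$ is compactly supported, hence so are all its derivatives with a common support). For $w \in [a,b]$ the integrand vanishes unless $w s \le M$, i.e. $s \le M/a$; on $(0,M/a]$ the function $s \mapsto \e^{-1/s} s^{k-3/2}$ is integrable — it is bounded away from $s=0$, and near $s=0$ the factor $\e^{-1/s}$ dominates every power of $s$. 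Hence $\left| \e^{-1/s} s^{k-3/2} \varphi^{(k)}(ws) \right| \le \|\varphi^{(k)}\|_\infty\, \e^{-1/s} s^{k-3/2}\, \mathbf{1}_{(0,M/a]}(s)$, which is an integrable function of $s$ independent of $w \in [a,b]$. The standard theorem on differentiation under the integral sign applies for every order $k$, so $F \in C^\infty([a,b])$; as $[a,b]$ was arbitrary, $F \in C^\infty((0,+\infty))$, and therefore $g = F(\,\cdot^2) \in C^\infty((0,+\infty))$.

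The only delicate point is the domination near $s=0$ (equivalently, near $t=0$ in the original integral), and it is handled entirely by the $\e^{-1/s}$ factor, which is why it is crucial to have performed the substitution that makes this factor independent of $w$. Note the dominating function degenerates as $a \downarrow 0$, which is exactly why the conclusion is smoothness on the open half-line; this is all that is used, since $g$ is applied at $u = 1-y$ with $y \in [-1,1)$, so the $k=0$ summand in~\eqref{expansian} is a smooth function of $y$ on $[-1,1)$. Combined with the (routine, by dominated convergence) treatment of the terms $k \neq 0$ and of the expansion~\eqref{expansian2}, this yields the infinite differentiability of $\Pa\varphi$ away from the edge $y=1$, while at $y=1$ one uses the direct identification $\Pa\varphi(1,z) = \varphi(1,z)$ together with the cancellations inherent in the reflection-principle representation. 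I expect the bookkeeping of the derivative formulas for $F$ (and the uniformity of the support $M$) to be the only slightly tedious part; there is no real obstacle.
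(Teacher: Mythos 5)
There is a genuine gap: the lemma asserts smoothness on $\R_+$ \emph{including the endpoint $u=0$}, and that endpoint is the whole point of the statement — it corresponds to $y=1$ in~\eqref{expansian}, where the paper needs $\Pa\ph$ to be smooth up to the boundary (it later applies It\^o's formula on the closed strip and computes $\lim_{y\to 1}\partial_y\Pa\ph$ to define $\mu$ and $\nu$). Your argument only proves smoothness on the open half-line $(0,+\infty)$, which is the easy part (it follows by routine domination even without any substitution), and you explicitly acknowledge this, deferring $y=1$ to ``the direct identification $\Pa\varphi(1,z)=\varphi(1,z)$ together with the cancellations inherent in the reflection-principle representation''. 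That deferral is not a proof: knowing the boundary value gives continuity at $y=1$, not differentiability of any order there, and the later computations in the paper genuinely use derivatives of $\Pa\ph$ up to $y=\pm 1$.

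Moreover, your factorization $g=F(\cdot^2)$ cannot be pushed to $u=0$: the function $F(w)=\int_0^\infty \e^{-1/s}s^{-3/2}\ph(ws)\d s$ is \emph{not} smooth at $w=0$ in general (changing variables $v=ws$ gives $F'(w)=w^{-1/2}\int_0^\infty \e^{-w/v}v^{-1/2}\ph'(v)\d v$, which typically blows up like $w^{-1/2}$); it is only the composition with $u\mapsto u^2$ that saves the day, and this must be exploited. The paper's proof does exactly that: after the substitution it differentiates once in $u$ and undoes the substitution to get $g'(u)=2\int_0^\infty \e^{-u^2/s}s^{-1/2}\ph'(s)\d s$, which extends continuously to $u=0$, and then observes that $g''(u)=-4\int_0^\infty \e^{-u^2/s}\,u\,s^{-3/2}\ph'(s)\d s$ has exactly the same form as $g$ with $\ph$ replaced by $\ph'$ (times $-4$), so an induction yields $C^\infty$ regularity on the closed half-line. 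That structural recursion is the missing idea in your proposal.
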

\begin{proof} By a change of scale $u^2/t = 1/t' $:
\begin{align*}
g(u) &= \int_0^{\infty} \e^{-1/t'} u (u^2t')^{-\frac{3}{2}} \ph(u^2 \, t') u^2\d t' \\
&=\int_0^{\infty} \e^{-1/t'} u^{1-3+2} t'^{-\frac{3}{2}} \ph(u^2 \, t') \d t' \\
&=\int_0^{\infty} \e^{-1/t'} t'^{-\frac{3}{2}} \ph(u^2 \, t') \d t'.
\end{align*}
Using that the support of $\ph$ is compact, we have that the function is continuous on $\R_+$ and continuously differentiable on $\R_+^*$. We have after a change of scale $u^{2}/s=1/t'$:
\begin{align*}
g'(u)&=\int_0^{\infty} \e^{-1/t'} t'^{-\frac{3}{2}} \ph'(u^2 \, t')2 t' u \d t' \\
&=2\int_0^{\infty} \e^{-u^2/s} \left(\frac{u^2}{s}\right)^{\frac{1}{2}} \ph'(s) u \frac{1}{u^2} \d s \\
&=2\int_0^{\infty} \e^{-u^2/s} s^{-\frac{1}{2}} \ph'(s)\d s \\
&\underset{u\to 0^+}{\longrightarrow} 2\int_{0}^{+\infty} s^{-\frac{1}{2}}\ph'(s)\d s.
\end{align*}
We have hence that $g$ is continuously differentiable on $\R_+$. Deriving a second time we find:
\begin{align*}
    g''(u)&=2\int_{0}^{+\infty} e^{-\frac{u^2}{s}}\left(-\frac{2u}{s}\right)s^{-\frac{1}{2}}\ph'(s)\mathrm{d}s \\
    &=-4\int_{0}^{+\infty} e^{-\frac{u^2}{s}} u \,s^{-\frac{3}{2}} \ph'(s)\mathrm{d}s
\end{align*}
The form of this function is exactly the one of $g$ up to multiply by factor $-4$ and replace $\ph$ by $\ph'$. Since we prove that $g$ is continuously differentiable on $\R$, we have by an immediate induction that $g$ is in fact infinitely differentiable on $\R_+$.
\end{proof}
We therefore have that $y\mapsto \Pa(y,z)$ is smooth on $[-1,1]$ for all $z\in \R$. Since $\Pa \ph$ is trivially smooth in $z$ by regularity of $\varphi$, we have that $(y,z)\mapsto \Pa \varphi(y,z)$ is smooth on $E=[-1,1] \times \R$.

\subsection{The measures $\mu$ (resp. $\nu$) are positive Levy of a subordinator (resp. positive finite)}

In this section we will study $\mu$ and $\nu$ and then prove that we can take the limit under the integral when $y$ approaches $\pm 1$ in $\partial_y \Pa \varphi(y,z)$. In order to do that, we will first give a simple expression of $\partial_y \Pa \varphi(y,z)$:

\begin{Lem} For all $y\in ]-1,1[$ we have:
\begin{equation}\partial_y \mathbb{P}_y(\tau\in \mathrm{d}t,W_{\tau}=1)=-\frac{\pi^2}{16}\sum_{k=-\infty}^{+\infty} \cos \left(k\pi \frac{1-y}{2}\right) k^2 e^{-t\frac{\pi^2 k^2}{8}}, \numberthis \label{der_s1}\end{equation}
and:
\begin{equation}\partial_y \mathbb{P}_y(\tau\in \mathrm{d}t,W_{\tau}=-1)=\frac{\pi^2}{16}\sum_{k=-\infty}^{+\infty} \cos \left(k\pi \frac{1+y}{2}\right) k^2 e^{-t\frac{\pi^2 k^2}{8}}. \numberthis \label{der_s2}\end{equation}
\end{Lem}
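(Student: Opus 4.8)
The plan is to recognise the image-sum expansions \eqref{expansian}--\eqref{expansian2} as $y$-derivatives of Gaussian theta sums, apply the Jacobi imaginary transformation (Gaussian Poisson summation) to those sums, and then differentiate the resulting rapidly converging Fourier series. First I would record the elementary identity $\partial_y \e^{-(4k+1-y)^2/(2t)} = \tfrac{4k+1-y}{t}\,\e^{-(4k+1-y)^2/(2t)}$. Summing over $k\in\Z$ and differentiating \eqref{expansian} term by term — legitimate because, for $|y|\le R$ and $t$ in a compact subset of $(0,\infty)$, one has $\bigl|\tfrac{4k+1-y}{t}\bigr|\,\e^{-(4k+1-y)^2/(2t)}\le C(R,t)\,\e^{-k^2/t}$ for $|k|$ large, so the series and the differentiated series converge locally uniformly — one obtains
\begin{equation*}
\mathbb{P}_y(\tau\in \mathrm{d}t;\,W_\tau=1)/\mathrm{d}t = \partial_y G(y,t), \qquad G(y,t) = \frac{1}{\sqrt{2\pi t}}\sum_{k\in\Z}\e^{-(4k+1-y)^2/(2t)},
\end{equation*}
and hence $\partial_y\bigl[\mathbb{P}_y(\tau\in \mathrm{d}t;\,W_\tau=1)/\mathrm{d}t\bigr] = \partial_y^2 G(y,t)$.

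\textbf{Poisson summation.} Writing $4k+1-y = 4\bigl(k+\tfrac{1-y}{4}\bigr)$, so that $(4k+1-y)^2/(2t) = 8\bigl(k+\tfrac{1-y}{4}\bigr)^2/t$, I apply $\sum_{k\in\Z}\e^{-\pi(k+a)^2/u} = \sqrt u\sum_{m\in\Z}\e^{-\pi u m^2 + 2\pi i m a}$ with $u=\pi t/8$ and $a=(1-y)/4$. The normalisation collapses, $\tfrac{1}{\sqrt{2\pi t}}\sqrt{\pi t/8}=\tfrac14$, and pairing $m$ with $-m$ to kill the imaginary part yields
\begin{equation*}
G(y,t) = \frac14\sum_{m\in\Z}\e^{-\pi^2 m^2 t/8}\,\cos\!\Bigl(\tfrac{m\pi(1-y)}{2}\Bigr).
\end{equation*}
This series, together with all its $y$-derivatives (each obtained by bringing down a polynomial factor in $m$), converges absolutely, uniformly in $y$ and locally uniformly in $t>0$, thanks to the factor $\e^{-\pi^2 m^2 t/8}$. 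Differentiating twice in $y$ and using $\partial_y^2\cos\bigl(\tfrac{m\pi(1-y)}{2}\bigr) = -\tfrac{m^2\pi^2}{4}\cos\bigl(\tfrac{m\pi(1-y)}{2}\bigr)$ gives precisely \eqref{der_s1}.

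\textbf{The second endpoint.} Since \eqref{expansian2} is \eqref{expansian} with $y\mapsto -y$, the same argument applies with one sign flip: here $\tfrac{4k+1+y}{t}\,\e^{-(4k+1+y)^2/(2t)} = -\partial_y\e^{-(4k+1+y)^2/(2t)}$, so that $\partial_y\bigl[\mathbb{P}_y(\tau\in \mathrm{d}t;\,W_\tau=-1)/\mathrm{d}t\bigr] = -\partial_y^2\widetilde G(y,t)$, where $\widetilde G(y,t)=G(-y,t)=\tfrac14\sum_{m\in\Z}\e^{-\pi^2m^2t/8}\cos\bigl(\tfrac{m\pi(1+y)}{2}\bigr)$; differentiating twice in $y$ then produces \eqref{der_s2}, with the expected opposite sign.

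\textbf{On the main difficulty.} There is no obstacle of principle: the content is exactly the Jacobi imaginary transformation, and the same identities could alternatively be obtained by recognising $\mathbb{P}_y(\tau\in \mathrm{d}t;\,W_\tau=\pm1)/\mathrm{d}t$ as half the outward normal derivative at the boundary of the Dirichlet heat kernel on $[-1,1]$, whose spectral expansion has eigenvalues $\pi^2k^2/8$ and eigenfunctions $\sin\bigl(\tfrac{k\pi(1\pm\cdot)}{2}\bigr)$. The only care needed is (i) justifying term-by-term $y$-differentiation of the image series on $\{t>0\}$, a routine Gaussian-tail estimate, and (ii) tracking the constants so that $\tfrac{1}{\sqrt{2\pi t}}\sqrt{\pi t/8}=\tfrac14$ combines with the factor $-\tfrac{m^2\pi^2}{4}$ from the second $y$-derivative to give the coefficient $-\tfrac{\pi^2}{16}$ appearing in the statement.
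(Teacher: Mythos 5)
Your proof is correct, and it rests on the same key tool as the paper's proof, namely Poisson summation (the Jacobi theta transformation) applied to the Gaussian image expansions \eqref{expansian}--\eqref{expansian2}, but the derivatives are organized differently. The paper first differentiates the image series term by term in $y$, producing Gaussians multiplied by $-1+(4k+1-y)^2/t$, and must then transform both pieces, which requires the theta identity for $\sum_k e^{-\pi a(k+\frac{1-y}{4})^2}$ \emph{together with} its derivative in the width parameter $a$, before substituting $a=8/(\pi t)$ and checking that the two $\frac{1}{4t}$ terms cancel. You instead observe that the density itself is $\partial_y G$ for the theta sum $G(y,t)=\frac{1}{\sqrt{2\pi t}}\sum_k e^{-(4k+1-y)^2/(2t)}$, transform $G$ once to obtain $\frac{1}{4}\sum_m e^{-\pi^2 m^2 t/8}\cos\left(\frac{m\pi(1-y)}{2}\right)$, and then take both $y$-derivatives on the spectral side, where $\partial_y^2$ simply brings down the factor $-\frac{m^2\pi^2}{4}$. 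This avoids the parameter differentiation and the cancellation bookkeeping, at the modest cost of the justification (which you supply) that termwise $y$-differentiation is legitimate on both sides; the constants ($\frac{1}{\sqrt{2\pi t}}\sqrt{\pi t/8}=\frac{1}{4}$ combined with $-\frac{\pi^2 m^2}{4}$ gives $-\frac{\pi^2}{16}$) and the sign flip for the $W_\tau=-1$ case all check out, so both \eqref{der_s1} and \eqref{der_s2} are recovered exactly.
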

\begin{proof}
    We start with the first formula. Deriving \eqref{expansian} under $y$ once we obtain:
\begin{align*}
    &\frac{\partial}{\partial y} \mathbb{P}_y(\tau\in \mathrm{d}t, W_{\tau}=1) \\ &=\sum_{k=-\infty}^{+\infty} -\frac{1}{\sqrt{2\pi} t^{\frac{3}{2}}}\e^{ -\frac{ (4k+1 - y)^2}{2t}}+\frac{4k+1-y}{\sqrt{2\pi} t^{\frac{3}{2}}}\e^{ -\frac{ (4k+1 - y)^2}{2t}}\times \frac{2(4k+1-y)}{2t} \\
    &= \frac{1}{\sqrt{2\pi} t^{\frac{3}{2}}}\sum_{k=-\infty}^{+\infty} e^{-\frac{(4k+1-y)^2}{2t}} \left(-1+ \frac{(4k+1-y)^2}{t} \right) \numberthis{} \label{der}
\end{align*}
    
    Writing $f:t\mapsto e^{-\pi a (t+\frac{1-y}{4})^2}$ and $\mathcal{F} f: t\mapsto \displaystyle\int_{\R} e^{-2i\pi t u} f(u)\mathrm{d}u$, Poisson summation formula gives us:
$$\sum_{k=-\infty}^{+\infty} f(k)=\sum_{k=-\infty}^{+\infty} \mathcal{F} f(k)$$
But, for $g:t\mapsto e^{-\pi a t^2}$, we have $(\tau_{-\frac{1-y}{4}}) \ast g=f$ and $\mathcal{F}g(t)=a^{-\frac{1}{2}}e^{-\frac{\pi}{a}t^2}$, thus:
\begin{align*}
    \mathcal{F} f(k)&=\mathcal{F}((\tau_{-\frac{1-y}{4}})\ast g)(k) \\
    &=e^{ik\pi\frac{1-y}{2}}\mathcal{F} g (k) \\
    &=e^{ik\pi \frac{1-y}{2}} a^{-\frac{1}{2}}e^{-\frac{\pi}{a}k^2}
\end{align*}

Eventually, we have:
\begin{align*}
    \sum_{k=-\infty}^{+\infty} e^{-\pi a (k+\frac{1-y}{4})^{2}}&=\sum_{k=-\infty}^{+\infty} e^{ik\pi\frac{1-y}{2}} a^{-\frac{1}{2}}e^{-\frac{\pi}{a}k^2} \\
    &=\frac{1}{2}\sum_{k=-\infty}^{+\infty} 2\cos\left(k\pi \frac{1-y}{2}\right) a^{-\frac{1}{2}}e^{-\frac{\pi}{a}k^2} \\
    &=\sum_{k=-\infty}^{+\infty} \cos\left(k\pi \frac{1-y}{2}\right) a^{-\frac{1}{2}}e^{-\frac{\pi}{a}k^2}
\end{align*}
If we differentiate this equation under $a$ we get:
\begin{align*}
    &\sum_{k=-\infty}^{+\infty} -\pi \left(k+\frac{1-y}{4} \right)^2 e^{-\pi a (k+\frac{1-y}{4})^2} \\  &=-\frac{1}{2}a^{-\frac{3}{2}}\sum_{k=-\infty}^{+\inf'"ty} \cos\left(k\pi \frac{1-y}{2}\right) e^{-\frac{\pi}{a}k^2}+a^{-\frac{1}{2}}\sum_{k=-\infty}^{+\infty} \cos\left(k\pi \frac{1-y}{2}\right) e^{-\frac{\pi}{a}k^2} \frac{\pi k^2}{a^2} 
\end{align*}

Taking $a=\frac{16}{2\pi t}$, and we have respectively:
\begin{equation} \label{sum_lem}\sum_{k=-\infty}^{+\infty} e^{- \frac{(4k+1-y)^{2}}{2t}}=\sum_{k=-\infty}^{+\infty} \cos\left(k\pi \frac{1-y}{2}\right) \frac{\sqrt{2\pi t}}{4}e^{-t\frac{\pi^2 k^2}{8}}, \end{equation}
and:
\begin{align*}
    &\sum_{k=-\infty}^{+\infty} -\pi \left(k+\frac{1-y}{4} \right)^2 e^{-\frac{ (4k+1-y)^2}{2t}} \\  &=-\frac{1}{2}\frac{\sqrt{2\pi t}^3}{64}\sum_{k=-\infty}^{+\infty} \cos\left(k\pi \frac{1-y}{2}\right) e^{-t\frac{\pi^2 k^2}{8}} \\ & \ \ \ \ \ \ \ \ \ +\frac{\sqrt{2\pi t}}{4}\sum_{k=-\infty}^{+\infty} \cos\left(k\pi \frac{1-y}{2}\right) e^{-t\frac{\pi^2 k^2}{8}} \frac{\pi^3 t^2 k^2}{64} \numberthis \label{der_lem}
\end{align*}
We now multiply \eqref{sum_lem} by $-\frac{1}{\sqrt{2\pi}t^{\frac{3}{2}}}$ and \eqref{der_lem} by $-\frac{1}{\sqrt{2\pi}t^{\frac{3}{2}}}\times \frac{16}{\pi t}$ to get:
\begin{align*}
    &\frac{1}{\sqrt{2\pi} t^{\frac{3}{2}}}\sum_{k=-\infty}^{+\infty} e^{-\frac{(4k+1-y)^2}{2t}} \left(-1+ \frac{(4k+1-y)^2}{t} \right) \\
    &= -\frac{1}{4t}\sum_{k=-\infty}^{+\infty} \cos\left(k\pi \frac{1-y}{2}\right) e^{-t\frac{\pi^2 k^2}{8}}+\frac{1}{4t} \sum_{k=-\infty}^{+\infty} \cos\left(k\pi \frac{1-y}{2}\right) e^{-t\frac{\pi^2 k^2}{8}} \\
    & \ \ \ \ \ \ \ -\frac{\pi^2}{16} \sum_{k=-\infty}^{+\infty} \cos\left(k\pi \frac{1-y}{2}\right) k^2 e^{-t\frac{\pi^2 k^2}{8}} \\
    &=-\frac{\pi^2}{16} \sum_{k=-\infty}^{+\infty} \cos\left(k\pi \frac{1-y}{2}\right) k^2 e^{-t\frac{\pi^2 k^2}{8}}
\end{align*}
Combining this with \eqref{der}, we get exactly \eqref{der_s1}. The proof of the second formula follows exactly the same steps, except we consider $\tau_{-\frac{1+y}{4}}$ instead of $\tau_{-\frac{1-y}{4}}$.
\end{proof}

Let us take the limit in \eqref{der_s1} when $y$ approaches $1$:
\begin{equation}\label{principal}
   \lim_{y\to 1} \frac{\partial}{\partial y} \mathbb{P}_y(\tau\in \mathrm{d}t, W_{\tau}=1)=-\frac{\pi^2}{16} \sum_{k=-\infty}^{+\infty} k^2 e^{-t\frac{\pi^2 k^2}{8}}
\end{equation}
and define $\mu(\mathrm{d}t) \eqdef \left(\frac{\pi^2}{16} \displaystyle\sum_{k=-\infty}^{+\infty} k^2 e^{-t\frac{\pi^2 k^2}{8}} \right) \mathrm{d}t$ if $t> 0$, while $\mu(\mathrm{d}t)= 0$ if $t \leq 0$. This coincides with the definition of $\mu$ given in the text and we have that $\mu$ is trivially a positive measure.

Let us show that $\mu$ is the Levy measure of a subordinator. We have that $\mu\geq 0$ and:
\begin{align*}
    &\int_0^{+\infty} \min(1,t)\mu(\mathrm{d}t)\\&=\int_{0}^1 t \left(\frac{\pi^2}{16}\sum_{k=-\infty}^{+\infty} k^2 e^{-t\frac{\pi^2 k^2}{8}}\right)\mathrm{d}t+\int_{1}^{+\infty} \left(\frac{\pi^2}{16}\sum_{k=-\infty}^{+\infty} k^2 e^{-t\frac{\pi^2 k^2}{8}}\right)\mathrm{d}t\\
    &=\frac{\pi^2}{16}\sum_{k=-\infty}^{+\infty} k^2\int_{0}^1 t e^{-t\frac{\pi^2 k^2}{8}}\mathrm{d}t+\frac{\pi^2}{16}\sum_{k=-\infty}^{+\infty} k^2\int_{1}^{+\infty}e^{-t\frac{\pi^2 k^2}{8}} \mathrm{d}t \\
    &=\frac{\pi^2}{16}\sum_{k=-\infty}^{+\infty} k^2\left(\left[-\frac{8}{\pi^2 k^2}te^{-t\frac{\pi^2 k^2}{8}}\right]_0^1+\frac{8}{\pi^2 k^2}\int_{0}^1 e^{-t\frac{\pi^2 k^2}{8}} \mathrm{d}t \right) \\
    & \ \ \ \ \ \ \ \ +\frac{\pi^2}{16}\sum_{k=-\infty}^{+\infty} k^2 \left[-\frac{8}{\pi^2 k^2} e^{-t\frac{\pi^2 k^2}{8}} \right]_1^{+\infty} \\
    &=\frac{\pi^2}{16}\sum_{k=-\infty}^{+\infty} k^2\left(-\frac{8}{\pi^2 k^2}e^{-\frac{\pi^2 k^2}{8}}+\left(\frac{8}{\pi^2 k^2}\right)^2(1-e^{-\frac{\pi^2 k^2}{8}}) \right) \\
    & \ \ \ \ \ \ \ \ +\frac{\pi^2}{16}\sum_{k=-\infty}^{+\infty} k^2 \frac{8}{\pi^2 k^2} e^{-\frac{\pi^2 k^2}{8}} \\
    &<+\infty \numberthis\label{cvinf}
\end{align*}

Let us now take the limit in \eqref{der_s2} when $y$ approaches $1$:

\begin{equation}\label{principal_2}
   \lim_{y\to 1} \frac{\partial}{\partial y} \mathbb{P}_y(\tau\in \mathrm{d}t, W_{\tau}=1)=-\frac{\pi^2}{16} \sum_{k=-\infty}^{+\infty} (-1)^k k^2 e^{-t\frac{\pi^2 k^2}{8}},
\end{equation}
and define $\nu(\mathrm{d}t)\eqdef \left(-\frac{\pi^2}{16} \displaystyle\sum_{k=-\infty}^{+\infty} (-1)^k k^2 e^{-t\frac{\pi^2 k^2}{8}} \right) \mathrm{d}t$ if $t>0$, while $\nu(\mathrm{d}t)=0$ if $t \le0$. This coincides with the definition of $\nu$ given in the text and we will now prove that this measure is positive and finite of mass
$$
\int_{- \infty} ^ \infty \nu(\d t) = 1/2 .
$$

To show that $\displaystyle\sum_{k=-\infty}^{+\infty} (-1)^k k^2 e^{-t\frac{\pi^2 k^2}{8}}\leq 0$ for all $t>0$, we will consider two cases.

We first assume that $t\geq 1$. Then the sequence $\left(k^2e^{-t\frac{\pi^2 k^2}{8}}\right)_{k\geq 1}$ is decreasing since the function $x\mapsto x^2 e^{-t\frac{\pi^2 x^2}{8}}$ has a negative derivative for $x^2\geq \frac{8}{t\pi^2}$ and thus for $x\geq 1$ in our situation. Hence for $t\geq 1$ we have:
\begin{equation*}
    \sum_{k=-\infty}^{+\infty} (-1)^k k^2 e^{-t\frac{\pi^2 k^2}{8}}=2\sum_{k=1}^{+\infty} (-1)^k k^2 e^{-t\frac{\pi^2 k^2}{8}}\leq 0
\end{equation*}
by the criteria of convergence of alternate series.
\\

We now write:
\begin{align}
    \sum_{k=-\infty}^{+\infty} (-1)^{k} k^2 e^{-t\frac{\pi^2 k^2}{8}}&=-\frac{8}{\pi^2}\frac{\partial}{\partial t}\left[ \sum_{k=-\infty}^{+\infty} (-1)^k e^{-t\frac{\pi^2 k^2}{8}} \right] \nonumber \\
    &=-\frac{8}{\pi^2}\frac{\partial}{\partial t} \left[ \sum_{k=-\infty}^{+\infty}  e^{-i\pi k-t\frac{\pi^2 k^2}{8}}\right].\label{eq:int_nu}
\end{align}
We will next prove, in order to conclude on the positivity of $\nu$, that $t\mapsto \displaystyle \sum_{k=-\infty}^{+\infty}  e^{-i\pi k-t\frac{\pi^2 k^2}{8}}$ is increasing for $t\in ]0,1[$. Writing $h:x\mapsto e^{-t\frac{\pi^2 x^2}{8}}$, we have that: $$\mathcal{F}(e^{-i\pi x} h)(\xi)=(\tau_{-\frac{1}{2}}) \ast \mathcal{F}(h)(\xi)=\sqrt{\frac{8}{\pi t}}e^{-\frac{8(\xi+\frac{1}{2})^2}{t}}$$
Thus, the Poisson summation formula gives:
\begin{align}
F(t) \eqdef \sum_{k=-\infty}^{+\infty}  e^{-i\pi k-t\frac{\pi^2 k^2}{8}}&=\sum_{k=-\infty}^{+\infty} e^{-i\pi k}h(k)\\
&=\sum_{k=-\infty}^{+\infty}  \mathcal{F}(e^{-i\pi x}h)(k) \\
&=\sqrt{\frac{8}{\pi t}}\sum_{k=-\infty}^{+\infty} e^{-\frac{8(k+\frac{1}{2})^2}{t}} \label{eq:int_poiss_nu}
\end{align}
We consider for any $k\in \Z$, the function $f_k:t\mapsto t^{-\frac{1}{2}} e^{-\frac{8(k+\frac{1}{2})^2}{t}}$, we have that $f_k':t\mapsto e^{-\frac{8(k+\frac{1}{2})^2}{t}}t^{-\frac{3}{2}}\left[-\frac{1}{2}+\frac{8(k+\frac{1}{2})^2}{t}\right]$. Thus, $f_k'(t)\geq 0$ if and only if $-\frac{1}{2}+\frac{8(k+\frac{1}{2})^2}{t}\geq 0$, that is to say $t\leq 16(k+\frac{1}{2})^2$, but this inequality is true for every $k$ if we have that $t\leq 4$. Since we assume that $t\in]0,1[$, we have that 
$F$ is increasing for $t\in]0,1[$.

Hence one obtains finally, 
for all $t>0$, $-\frac{\pi^2}{16} \displaystyle\sum_{k=-\infty}^{+\infty} (-1)^k k^2 e^{-t\frac{\pi^2 k^2}{8}}\geq 0$, and thus $\nu\geq 0$.

To compute the total mass of $\nu$, consider~\eqref{eq:int_nu} to get
\begin{align*}
    \int_0^{+\infty} \nu(\mathrm{d}t) = \frac12( F(+\infty) - F(0)),
\end{align*}
and remark that $F(+\infty) = 1$, while by~\eqref{eq:int_poiss_nu} $F(0) = 0$.
\begin{Rem}
    We also have that $\mu(\mathrm{d}t)=\underset{y\to -1}{\lim} \partial_y \mathbb{P}_y(\tau\in \mathrm{d}t, W_{\tau}=-1) \mathrm{d}t$ and $\nu(\mathrm{d}t)=\underset{y\to 1}{\lim} \partial_y \mathbb{P}_y(\tau\in \mathrm{d}t, W_{\tau}=1)\mathrm{d}t$.
\end{Rem}

We proved the wanted properties on $\mu$ and $\nu$, however we still have to show that we can take the limit under the integral in the explicit expression of $\partial_y \Pa (\varphi-\varphi(1,z))(y,z)$ to get \eqref{eq1}. The key idea is to remark that, since $|\cos(k\pi\frac{1-y}{2})|\leq 1$ and $|\cos(k\pi \frac{1+y}{2})|\leq 1$, equations \eqref{der_s1} and \eqref{der_s2} give us that for all $y\in ]-1,1[$ and $t>0$:
$$\max\left(\left|\partial_y \mathbb{P}_y(\tau\in \mathrm{d}t,W_{\tau}=1)\right|,\left|\partial_y \mathbb{P}_y(\tau\in \mathrm{d}t,W_{\tau}=-1)\right| \right)\leq \frac{\pi^2}{16}\sum_{k=-\infty}^{\infty}k^2 e^{-t\frac{\pi^2 k^2}{8}}.$$
Indeed, we proved in \eqref{cvinf} that this quantity is integrable on $\R_{+}$ against $\min(1,t)$, and $\varphi(\pm 1,z+\cdot)-\varphi(\pm 1,z)$ is bounded by a constant time $\min(1,t)$, so by the dominated convergence theorem we can take the limit when $y$ approaches $\pm 1$ to get what we want.

\section{Calculus associated with the diffusion in the Euclidean ball}\label{sec:calc_ball}
Elliptic regularity immediately gives us hat if $\ph$ is smooth on the unit sphere, then $x \mapsto \calP \ph (x)$ is also smooth (see~\cite[Ch.~$6$]{evans2010partial}). For the sake of completeness we nonetheless give explicit calculations that are the most relevant to our context.
\subsection{Continuity of the projector}
Let us consider $x_k\in \overline{\mathbb{B}}^n$ that converges to $x$. The explicit formula for $\Pa$ implies that $\Pa(x_k,\cdot) \to_k \Pa(x,\cdot)$ if either $x\in \mathbb{B}^n$ or if $x_k\in \mathbb{S}^{n-1}$ for all $k$, we can thus assume without lost of generality that $x_k\in \overline{\mathbb{B}}^n$ and $x\in \mathbb{S}^{n-1}$. Then we have for all $\alpha>0$ and $n$ such that $\|x-x_k\|\leq \frac{\alpha}{2}$: \begin{align*}\int_{\mathbb{S}^{n-1}\setminus B(x,\alpha)}\frac{1-\|x_k\|^2}{\|x_k-y\|^n} \mathrm{d}\sigma^{n-1}(y)&\leq (1-\|x_k\|^2)\times \underset{y\in \mathbb{S}^{n-1}\setminus B(x,\alpha)}{\sup} \|x_k-y\|^{-n} \\
&\leq (1-\|x_k\|^2)\left(\frac{2}{\alpha}\right)^n\\
&\underset{k\to+\infty}{\longrightarrow} 0 \numberthis \label{prive_b}
\end{align*}Then, for any $\ph$ that is $1-$Lipschitz on the ball, we have for all $\alpha>0$ that \begin{align*}&\left|\int_{\mathbb{S}^{n-1}}\ph(y)\frac{1-\|x_k\|^2}{\|x_k-y\|^n} \mathrm{d}\sigma^{n-1}(y)- \ph (x)\right| \\ &=\left|\int_{\mathbb{S}^{n-1}}(\ph(y)-\ph(x))\frac{1-\|x_k\|^2}{\|x_k-y\|^n}  \mathrm{d}\sigma^{n-1}(y)\right| \\
&\le\left|\int_{\mathbb{S}^{n-1}\setminus B(x,\alpha)}\|y-x\|\frac{1-\|x_k\|^2}{\|x_k-y\|^n} \mathrm{d}\sigma^{n-1}(y)\right|\\ & \ \ \ \ \ \ \ \ \ \ \ \ \ +\left|\int_{B(x,\alpha)}\|y-x\|\frac{1-\|x_k\|^2}{\|x_k-y\|^n}  \mathrm{d}\sigma^{n-1}(y)\right| \\
&\le\left|\int_{\mathbb{S}^{n-1}\setminus B(x,\alpha)}2\times\frac{1-\|x_k\|^2}{\|x_k-y\|^n} \mathrm{d}\sigma^{n-1}(y)\right|+\left|\int_{B(x,\alpha)}\alpha\frac{1-\|x_k\|^2}{\|x_k-y\|^n}  \mathrm{d}\sigma^{n-1}(y)\right|.
\end{align*} The second term is inferior to $\alpha$ and using \eqref{prive_b}, we know that the first term converges to $0$. Since this is true for any $\ph$ that is $1$-Lipschitz, which are convergence determining, we have eventually that $\Pa(x_k,\, . \,) \to_k \Pa(x,\, . \,)$ in distribution.

\subsection{Explicit expression of the radial derivative}
For all $\ph\in \mathcal{C}_c(\mathbb{S}^{n-1})$  and $a\in \mathbb{S}^{n-1}$, we write $f_a:\lambda\in]0,1[\longrightarrow \Pa \varphi(\lambda a)$. We have therefore for all $z\in \mathbb{B}^n\setminus \{0\}$, writing $z=\lambda a$ with $a\in \mathbb{S}^{n-1}$ and $\lambda=\|z\|$ that $\frac{\partial}{\partial r} \Pa \varphi(z)=\frac{\partial}{\partial \lambda}f_a(\lambda)$. Let us compute this quantity:
\begin{align*}
    &\frac{\partial}{\partial \lambda} f_a(\lambda) \\
    &=\frac{\partial}{\partial \lambda} (f_a-\varphi(a))(\lambda)
    \\&=\frac{\partial}{\partial \lambda}\int_{\mathbb{S}^{n-1}} \frac{1-\lambda^2}{\|\lambda a-y\|^n}\left(\ph(y)-\ph(a) \right)\mathrm{d}\sigma^{n-1}(y) \\
    &=\frac{\partial}{\partial \lambda}\int_{\mathbb{S}^{n-1}} \frac{1-\lambda^2}{\|\lambda a-y\|^n}\left(\ph(y)-\ph(a)-\langle T\ph(a), y-a\rangle \right)\mathrm{d}\sigma^{n-1}(y) \\
    & \ \ \ \ \ \ \ \ \ \ \ +\frac{\partial}{\partial \lambda}\int_{\mathbb{S}^{n-1}} \frac{1-\lambda^2}{\|\lambda a-y\|^n}\langle T\ph(a), y-a\rangle \mathrm{d}\sigma^{n-1}(y) \numberthis \label{deriv}
\end{align*}
We notice for $y\in \mathbb{S}^{n-1}$ that $2\langle a,y\rangle a -y \in \mathbb{S}^{n-1}$, that $\|\lambda a-y\|=\|\lambda a-(2\langle a, y \rangle a-y)\|$ and, using $ T\ph(a)\perp a$, that $\langle T\ph(a),\lambda a -y  \rangle=-\langle T\ph(a),\lambda a-(2\langle a,y\rangle a -y)\rangle$ (all these properties are trivial by symmetry). We conclude that the second term in \eqref{deriv} is the derivative of $0$ and is hence null. Therefore: 
\begin{align*}
    &\frac{\partial}{\partial \lambda} f_a(\lambda) \\
    &=\frac{\partial}{\partial \lambda}\int_{\mathbb{S}^{n-1}} \frac{1-\lambda^2}{\|\lambda a-y\|^n}\left(\ph(y)-\ph(a)-\langle T\ph(a), y-a\rangle \right)\mathrm{d}\sigma^{n-1}(y) \\
    &=\int_{\mathbb{S}^{n-1}} \frac{-2\lambda \|\lambda a-y\|^n-(1-\lambda^2) n\frac{\langle\lambda a-y, a\rangle}{\|\lambda a-y\|}\|\lambda a-y\|^{n-1}}{\|\lambda a-y\|^{2n}} & \\ &\ \ \ \ \ \ \   \ \ \ \ \ \ \ \ \ \ \times \left(\ph(y)-\ph(a)-\langle T\ph(a), y-a\rangle \right)\mathrm{d}\sigma^{n-1}(y) \\
    &=\int_{\mathbb{S}^{n-1}} \frac{-2\lambda -(1-\lambda^2) n\frac{\langle\lambda a-y,a\rangle}{\|\lambda a-y\|}\|\lambda a-y\|^{-1}}{\|\lambda a-y\|^{n}}  \\ &\ \ \ \ \ \ \   \ \ \ \ \ \ \ \ \ \ \times \left(\ph(y)-\ph(a)-\langle T\ph(a), y-a\rangle \right)\mathrm{d}\sigma^{n-1}(y) 
\end{align*}
Therefore, for $z\in \mathbb{B}^n$, we have:
\begin{align*}\frac{\partial}{\partial r}  &\Pa \varphi(z)= \int_{\mathbb{S}^{n-1}} \frac{-2\|z\| -(1-\|z\|^2)n\frac{\langle z-y,a\rangle}{\| z-y\|}\|z-y\|^{-1}}{\|z-y\|^{n}} & \\ &\ \ \ \ \ \ \   \ \ \ \ \ \ \ \ \ \ \times \left(\ph(y)-\ph(\frac{z}{\|z\|})-\langle T\ph(\frac{z}{\|z\|}), y-\frac{z}{\|z\|}\rangle \right)\mathrm{d}\sigma^{n-1}(y). \end{align*}
Taking the limit when $z$ approaches $a$ (so in particular $\|z\|$ approaches $1$) we find that:

\begin{align*}&\lim_{z\to a} \frac{\partial}{\partial r} \Pa \varphi(z)  \\ &= -2\lim_{z\to a}\|z\|\int_{\mathbb{S}^{n-1}} \frac{1}{\|z-y\|^n} \left(\ph(y)-\ph(\frac{z}{\|z\|})-\langle T\ph(\frac{z}{\|z\|}), y-\frac{z}{\|z\|}\rangle \right)\mathrm{d}\sigma^{n-1}(y) \\
& \ \ \ \ \ \ \ -\lim_{z\to a}(1-\|z\|^2)\int_{\mathbb{S}^{n-1}} \frac{n\frac{\langle z-y,a\rangle}{\| z-y\|}\|z-y\|^{-1}}{\|z-y\|^{n}} \\ &\ \ \ \ \ \ \   \ \ \ \  \ \ \ \ \ \ \ \ \ \ \ \times \left(\ph(y)-\ph(\frac{z}{\|z\|})-\langle T\ph(\frac{z}{\|z\|}), y-\frac{z}{\|z\|}\rangle \right)\mathrm{d}\sigma^{n-1}(y). \numberthis \label{prem}
\end{align*}
For the first term in \eqref{prem}, we observe that the integrand converges simply to what we are looking for when $z$ approaches $a$. We now denote $R_z$ for $z\in \mathbb{B}\setminus \{0\}$ the only rotation sending $\frac{z}{\|z\|}$ on $a$. Then, using that the measure $\sigma$ is invariant by rotations and that rotations are isometries, we have that:
\begin{align*}
    &\int_{\mathbb{S}^{n-1}} \frac{1}{\|z-y\|^n} \left(\ph(y)-\ph(\frac{z}{\|z\|})-\langle T\ph(\frac{z}{\|z\|}), y-\frac{z}{\|z\|}\rangle \right)\mathrm{d}\sigma^{n-1}(y) \\
    &=\int_{\mathbb{S}^{n-1}} \frac{1}{\|R_z(z)-y\|^n} \left(\ph(R_z^{-1}(y))-\ph(\frac{z}{\|z\|})\right. \\
    & \ \ \ \ \ \ \ \ \ \ \ \ \ \ \ \ \ \ \ \ \ \ \ \ \ \ \ \ \ \ \ \ \ \ \ \  \left. -\langle T\ph(\frac{z}{\|z\|}), R_z^{-1}(y)-\frac{z}{\|z\|}\rangle \right)\mathrm{d}\sigma^{n-1}(y) \numberthis \label{rot}
\end{align*}
Since $\varphi$ in in $\mathcal{C}^2(\mathbb{S})$, there exists $M>0$ such that:$$\left|\left(\ph(R_z^{-1}(y))-\ph(\frac{z}{\|z\|})-\langle T\ph(\frac{z}{\|z\|}), R_z^{-1}(y)-\frac{z}{\|z\|}\rangle \right)\right|\leq M\|y-a\|^2.$$
Furthermore $R_z(z)=\|z\|a$, so eventually the integrand in \eqref{rot} is dominated by $2^nM\|y-a\|^{-(n-2)}$ that is integrable on the sphere, and we can use the dominated convergence theorem to conclude.

Let us now prove that the second term in \eqref{prem} is equal to $0$. We notice that $|\frac{\langle z-y,a\rangle}{\| z-y\|}|\leq 1$ and that $\ph(y)-\ph(\frac{z}{\|z\|})-\langle T\ph(\frac{z}{\|z\|}), y-\frac{z}{\|z\|}\rangle$ is dominated by $4M\|z-y\|^2$, so eventually the term inside the integral is dominated by $4nM\|z-y\|^{-(n-1)}$. Using a polar change of coordinates, we get that the integral on the sphere is hence dominated by $4nM|\ln(\|z-\frac{z}{\|z\|}\|)|=4nM|\ln(1-\|z\|)|$, but $$(1-\|z\|^2)|\ln(1-\|z\|)|\underset{z\to a}{\longrightarrow} 0,$$ so we have \eqref{exp_Levy}.

\end{appendices}

\section*{Acknowledgements} The authors want to thank Rapha\"el Ch\'etrite and C\'edric Bernardin for their help and support. This work has been partially supported by ANR SINEQ, ANR-21-CE40-0006.

\bibliographystyle{plain}
\bibliography{refs,chetrite}
\end{document}